\theoremstyle{plain}
\newtheorem*{thm*}{Theorem}
\newtheorem*{con*}{Conjecture}
\newtheorem{thm}{Theorem}[section]
\newtheorem{pro}[thm]{Proposition}
\newtheorem{lem}[thm]{Lemma}
\newtheorem{con}[thm]{Conjecture}
\newtheorem{cor}[thm]{Corollary}
\newtheorem*{str*}{Strategy}
\theoremstyle{definition}
\newtheorem{defn}[thm]{Definition}
\theoremstyle{remark}
\newtheorem{rmk}[thm]{Remark}
\title[Flipclasses and Combinatorial Invariance]{Flipclasses and Combinatorial Invariance for Kazhdan--Lusztig polynomials}
\author[Esposito, Marietti]{Francesco Esposito, Mario Marietti}
\date{}
\address{Francesco Esposito, Dipartimento di Matematica, Universit\`a degli Studi di Padova,
via Trieste 63, 35121 Padova, Italy}
\address{Mario Marietti, Dipartimento  di Ingegneria Industriale e Scienze Matematiche, Universit\`a Politecnica delle Marche, via Brecce Bianche, 60131 Ancona,  Italy}
\email{esposito@math.unipd.it}
\email{m.marietti@univpm.it}
\subjclass[2020]
{ 05E10 - 05E16 (primary), 20F55  (secondary)}
\keywords{Kazhdan--Lusztig polynomials, Combinatorial invariance, flipclasses}
\begin{document}

\maketitle

\begin{abstract}
In this work,  we investigate a novel approach to the Combinatorial Invariance Conjecture of Kazhdan--Lusztig polynomials for the symmetric group.  Using the new concept of flipclasses, we introduce some combinatorial invariants of intervals in the symmetric group whose analysis leads us to a recipe to compute the coefficients of $q^h$ of the Kazhdan--Lusztig $\widetilde{R}$-polynomials, for $h\leq 6$. This recipe depends only on  the isomorphism class (as a poset) of the interval indexing the polynomial and thus  provides new evidence for the Combinatorial Invariance Conjecture.
\end{abstract}

\section{Introduction}
The introduction of Kazhdan--Lusztig polynomials by Kazhdan and Lusztig in \cite{K-L} has proved to be a major paradigm shift in Lie theory and representation theory, giving a fundamental contribution to the development of geometric representation theory. 
These polynomials, indexed by two elements  $u$ and $v$ in a Coxeter group $W$,  have soon found applications in several contexts. 

Among the combinatorial problems regarding Kazhdan--Lusztig polynomials, arguably the most prominent one is what is usually referred to as the Combinatorial Invariance Conjecture. It was independently posed in the 80's by Lusztig, in private, and Dyer (see \cite[Remark~7.31]{Dyeth}).

\begin{con}
\label{comb-inv-con}
The Kazhdan--Lusztig polynomial $P_{u,v}(q)$ depends only on the isomorphism type of the interval $[u,v]$ as a partially ordered set under Bruhat order.
\end{con}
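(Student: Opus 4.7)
The plan is to reduce Conjecture~\ref{comb-inv-con} to a statement about the Kazhdan--Lusztig $\widetilde{R}$-polynomials $\widetilde{R}_{u,v}(q)$. Since $P_{u,v}(q)$ is determined by the family $\{\widetilde{R}_{x,y}(q) : x,y \in [u,v]\}$ via a standard inversion, it is enough to prove that each $\widetilde{R}_{x,y}(q)$ depends only on the poset isomorphism type of $[x,y]$. The classical recursion for $\widetilde{R}$ requires selecting a left descent $s \in D_L(v)$ and branches according to whether $s \in D_L(u)$; this recursion is manifestly group-theoretic, since neither the set $D_L(v)$ nor the companion interval $[u,sv]$ is visibly determined by the poset $[u,v]$ alone. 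So the first conceptual move must be to reformulate the recursion in a way whose ingredients can be read off from the poset.

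My first step would therefore be to build a \emph{symmetrized} version of the recursion, replacing the contribution of one descent by an aggregate over a combinatorially defined family. This is where flipclasses should enter: one would define an equivalence relation, purely in terms of the poset structure of $[u,v]$, on certain sub-intervals (say, on coatom-deletions that produce isomorphic sub-intervals, linked by a poset-theoretic \emph{flip}), and argue that the true invariants are not the individual terms $\widetilde{R}_{u,sv}(q)$ but their sums over a flipclass $\mathcal{F}$, of the form
$$\Sigma_\mathcal{F}(q) \; := \; \sum_{s \in \mathcal{F}} \widetilde{R}_{u, sv}(q).$$
The technical heart of the argument is to show that these aggregates, and hence the coefficients of $\widetilde{R}_{u,v}(q)$, admit closed expressions in terms of purely poset-theoretic counts on $[u,v]$.

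The hardest part will be to sustain the bookkeeping simultaneously in the length of $[u,v]$ and in the degree $h$ of the coefficient of $q^h$ being computed: the recursion couples higher-degree information on $[u,v]$ to lower-degree information on sub-intervals, so an induction has to propagate combinatorial invariance through both parameters at once. I expect the approach to succeed for the first several coefficients, where the relevant flipclass aggregates can be enumerated and matched against explicit poset statistics, but a genuinely new refinement---perhaps an iterated or nested notion of flipclass on chains of sub-intervals---seems necessary to push the method through for all $h$ and thereby settle the full Combinatorial Invariance Conjecture.
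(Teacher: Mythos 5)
The statement you were asked to prove is Conjecture~\ref{comb-inv-con} itself; the paper does not prove it. The paper proves a partial result (Theorem~\ref{combinatoriainvarianza}: combinatorial invariance of the coefficient of $q^h$ of $\widetilde{R}_{u,v}(q)$ for $h\leq 6$, hence the full conjecture for intervals of length at most $8$) and proposes a strengthening (Conjecture~\ref{flipclasscongettura}), but the Combinatorial Invariance Conjecture remains open. Your proposal, which ends by acknowledging that ``a genuinely new refinement \ldots\ seems necessary to push the method through for all $h$,'' is therefore not a proof either; at best it sketches a program, and the question is whether it sketches the \emph{same} program as the paper's. It does not, for two reasons.

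First, your notion of ``flipclass'' is not the paper's. You define a flipclass as an equivalence class on coatom-deletions of $[u,v]$ (descents $s\in D_L(v)$ producing isomorphic sub-intervals $[u,sv]$), and your aggregate $\Sigma_{\mathcal F}(q)=\sum_{s\in\mathcal F}\widetilde R_{u,sv}(q)$ sums $\widetilde R$-polynomials over such a class. In the paper, a flipclass is an orbit of the flip group $\mathfrak F_{h,u,v}$ acting on $P_h(u,v)$, the set of length-$h$ directed paths from $u$ to $v$ in the Bruhat graph $B(\mathfrak S_n)$, with the flip being the local move that replaces a length-$2$ subpath by the unique alternative length-$2$ subpath with the same endpoints. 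These are objects of a completely different kind (orbits of paths, not equivalence classes of sub-intervals), and the invariants the paper attaches to them (support graph, time-support graph, $t$-vector, $\iota$-polynomial) have no analogue in your setup.

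Second, and more fundamentally, your route runs through the $\widetilde R$-recursion, whereas the paper's runs through Dyer's combinatorial formula (Theorem~\ref{Dyertilde}): $\widetilde R_{u,v}(q)=\sum q^{\ell(\Gamma)}$ over increasing paths $\Gamma$ w.r.t.\ a reflection ordering. You correctly identify the central obstruction with the recursion --- neither $D_L(v)$ nor $[u,sv]$ is visible from the poset $[u,v]$ alone --- but your remedy (aggregating over descents within a flipclass) does not by itself resolve it, since even the class $\mathcal F$ still presupposes knowing which elements of $[u,v]$ are of the form $sv$ for $s\in D_L(v)$, which is not poset data. The paper sidesteps this entirely: it fixes the degree $h$, partitions $P_h(u,v)$ into flipclasses $F$, shows that the multiset of their unlabelled isomorphism types (and hence of their $\iota$-polynomials) \emph{is} a combinatorial invariant of $[u,v]$ (Remark~\ref{costante}), proves the number $c(F)$ of increasing paths in a flipclass is independent of the reflection ordering (Theorem~\ref{prec=prec}), reduces the classification of $h$-flipclasses to a finite computation in $\mathfrak S_{h+1}$ (Theorem~\ref{h+1}), and then verifies by finite computation that the $\iota$-polynomial determines $c(F)$ for $h\leq 6$ (Theorems~\ref{corone}, \ref{teoh=4}, \ref{teoh=5}, \ref{teoh=6}). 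None of this involves the recursion or induction on interval length. If you want to pursue the paper's line, the object you should be flipping is a path in the Bruhat graph, not a coatom of the interval.
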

\noindent 
Although widely open, the Combinatorial Invariance Conjecture has been settled in some special cases, among which: 
 intervals of rank $\leq 4$ (see, e.g., \cite[7.31]{Dyeth}, and \cite[Chapter~5, Exercises~7 and~8]{BB}), intervals of rank $\leq 8$ in type $A$ and $\leq 6$ in types $B$ and $D$ (\cite{Inc}), 
   intervals in type $\tilde{A}_2$ (\cite{BLP}), intervals which are lattices (\cite[7.23]{Dyeth}, \cite{Bre94}),  and intervals starting from the identity  (\cite{BCM1}).
  We refer the reader to \cite{BreOPAC} for further details on  
 Conjecture \ref{comb-inv-con}.
Also a parabolic version of  Conjecture \ref{comb-inv-con} holds for intervals starting from the identity (\cite{Mtrans}, \cite{M}).
On the other hand,  certain entirely poset-theoretical generalizations of Conjecture \ref{comb-inv-con} fail (\cite{BCM2}, \cite{MJaco}).

The Combinatorial Invariance Conjecture for Kazhdan--Lusztig polynomials is equivalent to the Combinatorial Invariance Conjecture for $R$-polynomials (also introduced in \cite{K-L}), as well as for 
 $\widetilde{R}$-polynomials, which are  useful rescalings of  $R$-polynomials. As a matter of fact, some of the above results have been achieved by means of these other families of polynomials.

Recently, an even broader interest in the Combinatorial Invariance Conjecture has been sparked by the use of certain machine learning models to attack the problem (see 
\cite{BBDVW}, \cite{DVBBZTTBBJLWHK}). Blundell, Buesing, Davies, Veli\u ckovi\'c, and Williamson discovered a formula for the Kazhdan--Lusztig polynomials of the symmetric group, and propose a conjecture that implies
the Combinatorial Invariance Conjecture for the symmetric group. This conjecture is based on the concept of a hypercube decomposition, which has already been the object of several studies (see \cite{B-G}, \cite{B-M}, \cite{G-W}).

In this paper, we approach the Combinatorial Invariance Conjecture for the symmetric group $\mathfrak S_n$ from a new perspective.
We believe that the intervals might not be the ultimate atoms for the combinatorial invariance and should be decomposed into smaller building blocks, viz. the {\em flipclasses}. The guiding principle is that, regarding the combinatorial invariance,  all concepts  associated with intervals should in fact be viewed  as shadows of finer concepts associated with flipclasses. This work is intended to give first evidence in support of this new approach.

Let $u,v \in \mathfrak S_n$ with $u\leq v$ in Bruhat order. Let $B(\mathfrak S_n)$  denote  the Bruhat graph of $\mathfrak S_n$.
For each $h \in \mathbb N$, we partition the set  of paths from $u$ to $v$ of length $h$ in $B(\mathfrak S_n)$ into orbits under the action of the group generated by the flips. We call the orbits of this group {\em $h$-flipclasses}.
The multiset of the unlabelled $h$-flipclasses of an interval is a combinatorial invariant. By a combinatorial invariant of an interval we mean an object that depends only on the isomorphism class of the interval as a poset. With each flipclass $F$, we associate  two graphs, the support graph $S_F$ and the time-support graph $TS_F$, and a polynomial, the $\iota$-polynomial $\iota_F$  (see Section~\ref{definizioni} for the definitions). These new concepts yield combinatorial invariants.

We study these new combinatorial objects and prove that, if two flipclasses have the same $\iota$-polynomial, then they also have the same number of increasing paths (w.r.t. any reflection ordering) of length $h$, for $h\leq 6$. Thus, the multiset of the  $\iota$-polynomials of the $h$-flipclasses of paths from $u$ to $v$ %which is a combinatorial invariant, 
determines the coefficient of $q^h$ of the $\widetilde{R}$-polynomial  $\widetilde{R}_{u,v}(q)$, for $h\leq 6$. This implies that these coefficients are combinatorial invariants. We stress the facts that we find an explicit algorithm for computing such coefficients of the $\widetilde{R}$-polynomial  $\widetilde{R}_{u,v}(q)$ starting from the isomorphism class of Bruhat interval $[u, v]$ as a poset and  that this algorithm works for any interval in any symmetric group.

Let us succintly describe the ingredients of the proof. 
Fix $h$ in $\mathbb N$. 
The set whose elements are the $h$-flipclasses of any $\mathfrak S_n$
(letting $n$ vary)
is clearly infinite. On the other hand, we prove that the number of isomorphism classes of $h$-flipclasses is finite and that all such isomorphism classes can be constructed from $h$-flipclasses of $\mathfrak S_{h+1}$. Thus, we reduce the infinity of the general case to a finite analysis. By definition,  isomorphic flipclasses have the same number of increasing paths  w.r.t. the lexicographic order. But this alone is not applicable to the Combinatorial Invariance Conjecture because the multiset of the isomorphism classes of the $h$-flipclasses of an interval is not a combinatorial invariant of the interval since the flipclasses come with the labels of the edges.
However, we show that, for $h\leq 6$, the $\iota$-polynomial detects the number of increasing paths, and we construct the map sending a $\iota$-polynomial to the number of increasing paths  of any flipclass having that $\iota$-polynomial. These computations can be done with the aid of a computer thanks to the above reduction to finite calculations, but unfortunately they become extremely unwieldy and we carry out this program  up to  $h=6$. 
The multiset of the $\iota$-polynomials  of  the $h$-flipclasses of a fixed interval $[u,v]$ is a combinatorial invariant. Hence, the number of increasing paths from $u$ to $v$ of length $h$, for $h\leq 6$, is a combinatorial invariant: by a result of Dyer  (see \cite{DyeComp}),   this number is equal to the coefficient of $q^h$ of the $\widetilde{R}$-polynomial  $\widetilde{R}_{u,v}(q)$.

The algorithm derived from our study to compute the coefficient of $q^h$, for $h\leq 6$, of the $\widetilde{R}$-polynomial  $\widetilde{R}_{u,v}(q)$ from the isomorphism class of $[u,v]$ proceeds along the following steps.

\begin{enumerate}
\item Construct the $h$-flipclasses of paths from $u$ to $v$.
\item For each of these flipclasses, compute its $\iota$-polynomial.
\item
\label{numeri} For each of these flipclasses, compute its number of increasing paths starting  from its $\iota$-polynomial.
\item Sum the numbers obtained in the previous step to obtain  the coefficient of $q^h$ of $\widetilde{R}_{u,v}(q)$ 
\end{enumerate}
To give an idea of the rate of growth, we note that  the sequence of the number of $h$-flipclasses of  $\mathfrak S_{h+1}$   is $(4, 4, 50, 1096, 36634, 1701056)$, for $h=1, \ldots  ,6 $.

\medskip
It is well known that an interval has the same number of increasing paths w.r.t. any reflection ordering.  We prove that the same is true for flipclasses. This is first evidence  in support of the principle that the appropriate setting in formulating the Combinatorial Invariance Conjecture could be that of flipclasses. Differently from the proofs in the literature of the analogous result for intervals, our proof is local in nature. It thus sheds new light also on the case of intervals.

\medskip 
A natural conjecture  stems from our study, in line with the underlying philosophy. It is a refinement of the Combinatorial Invariance Conjecture and predicts that if two unlabelled $h$-flipclasses are isomorphic, then they have the same number of increasing paths.

 \medskip
 The organization of the paper is as follows. In Section~2, we recall some definitions, notation, and results that are needed in the paper. In Section~3, we introduce the main new concepts of this work, namely support graphs, time-support graphs, $\iota$-polynomials, and flipclasses. In Section~4, we prove that a flipclass has the same number of increasing paths w.r.t. any reflection ordering. In Section~5, we give some results on flipclasses, their support and time-support graphs, and their $\iota$-polynomials. In Section~6, we apply results of Section~5  to reduce the infinite computations to a finite number, for all $h\in \mathbb N$.   In Section~7, we provide a general strategy to prove the combinatorial invariance of the coefficient of $q^h$ of the $\widetilde{R}$-polynomials., for all $h \in \mathbb N$, and apply it for $h\leq 6$.  In Section~8, we propose a conjecture that implies the Combinatorial Invariance Conjecture.

\section{Preliminaries}

This section reviews the background material. 

Given $n\in \mathbb N^+$, we denote the set $\{1,2, \ldots, n\}$ by $[n]$ and the symmetric group on $[n]$ by $\mathfrak S_n$.   We write the elements of $\mathfrak S_n$ in one-line notation (so $u=[a_1, \ldots , a_n]$, or simply $u=a_1 \cdots a_n$, means
$u(i)=a_i$ for all $i \in [n]$) as well as in disjoint cycle-form, omitting to write the $1$-cycles.
We view $\mathfrak S_n$ as a Coxeter group with the simple transpositions $(i,i+1)$, for $i \in [n-1]$, as Coxeter generators. It is a Coxeter group of type $A$. 
We fix our notation on the symmetric group in the following list:
\smallskip 

$
\begin{array}{@{\hskip-1.3pt}l@{\qquad}l}
e &  \textrm{identity of $\mathfrak S_n$}, 
\\
S &= \{ (i,i+1) : i \in [n-1] \},  \textrm{  the set of {\em simple transpositions} of $\mathfrak S_n$},
\\
\ell  &  \textrm{the length function of $\mathfrak S_n$ with respect to $S$},
\\
w_0 &  \textrm{ the longest   element of $\mathfrak S_n$},
\\
T &= \{ w s w ^{-1} : w \in \mathfrak S_n, \; s \in S \},  \textrm{  the set of {\em transpositions} of $\mathfrak S_n$},
\\
D_R(w) & =\{ s \in S : \; \ell(w  s) < \ell(w ) \},  \textrm{ the right descent set of $w\in \mathfrak S_n$},
\\
D_L(w) & =\{ s \in S : \; \ell( sw) < \ell(w ) \},  \textrm{ the left descent set of $w\in \mathfrak S_n$},
\\
\leq & \textrm{ Bruhat order on $\mathfrak S_n$ (as well as usual order on $\mathbb R$)},
\\
\textrm{$[u,v]$} & =\{ w \in \mathfrak S_n \, : \; u \leq w \leq v \}, \textrm{ the (Bruhat) interval generated by $u,v\in \mathfrak S_n$},
\\
B(\mathfrak S_n)  &  \textrm{ the {\em Bruhat graph  of $\mathfrak S_n$}},
\\
B(X)  &  \textrm{ the (directed) graph induced on the subset $X$ by the Bruhat graph  $B(\mathfrak S_n)$},
\\
P_h(u,v) & \textrm{  the set of paths of length $h$ from $u$ to $v$ in $B(\mathfrak S_n)$}.
\end{array}$
\bigskip

Recall that the \emph{Bruhat graph} $B(\mathfrak S_n)$ is the edge-labelled directed graph with vertex set $\mathfrak S_n$ where  $u \xrightarrow{t} v$ if and only if $vu^{-1}=t \in T$ and $\ell(u)<\ell(v)$. Recall also that the \emph{Bruhat order} is the partial order on $\mathfrak S_n$ where $u \leq v$ provided that  there is a (directed) path from $u$ to $v$ in $B(\mathfrak S_n)$. (All paths that we consider in a directed graph are tacitly assumed to be directed.)

The following result (see 
\cite[Proposition~3.3]{DyeComp1}) implies that the Combinatorial Invariance Conjecture is equivalent to the conjecture that one  obtains by considering the isomorphism type of the graph $B([u,v])$ induced by the Bruhat graph on the elements of the Bruhat interval $[u,v]$.

\begin{thm}
Let $u,v\in \mathfrak S_n$. The isomorphism class (as a poset) of the  Bruhat interval $[u,v]$ determines the isomorphism class (as a directed graph) of the graph $B([u,v])$. 
\end{thm}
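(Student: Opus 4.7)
Every edge of $B([u,v])$ is directed from an element of smaller length to an element of larger length, and the length function on $[u,v]$ (shifted so that $u$ has length $0$) is recoverable as the rank function of the graded poset $[u,v]$. Consequently, a poset isomorphism $\varphi\colon [u,v] \to [u',v']$ automatically preserves ranks and hence is compatible with all potential edge orientations. The entire problem thus reduces to showing that the property ``$yx^{-1}\in T$'' for a pair $x<y$ in $[u,v]$ can be detected from the poset structure of the subinterval $[x,y]$ alone; granted this, $\varphi$ will carry Bruhat edges to Bruhat edges, and the induced directed graph isomorphism $B([u,v])\cong B([u',v'])$ follows.

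The covering relations of $[u,v]$ correspond exactly to the edges of $B([u,v])$ with $\ell(y)-\ell(x)=1$, since any Bruhat cover $x\lessdot y$ automatically satisfies $yx^{-1}\in T$, and conversely. These are immediately visible from the Hasse diagram and preserved by any poset isomorphism.

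For a pair $x<y$ with $\ell(y)-\ell(x)\geq 2$, I would invoke the poset-theoretic characterization of \emph{reflection intervals}: $yx^{-1}$ is a reflection if and only if $[x,y]$, regarded as an abstract graded poset, is isomorphic to the Bruhat interval $[e,t]$ for a reflection $t$ in a dihedral Coxeter group with $\ell(t)=\ell(y)-\ell(x)$. Such dihedral intervals have a rigid, explicitly describable shape (a string of diamonds capped by a specific top) depending only on the rank, so the condition can be tested using just the incidence data of $[u,v]$. Scanning over all pairs $x<y$ and applying this test therefore reconstructs the edge set of $B([u,v])$ purely from the poset.

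The real work lies in the characterization above. One direction is easy: if $yx^{-1}=t\in T$, then left multiplication by $x^{-1}$ gives a poset isomorphism $[x,y]\cong [e,t]$, and $[e,t]$ sits inside the dihedral reflection subgroup generated by the reflections appearing in a reduced expression of $t$, where its shape is completely explicit. The main obstacle is the converse: ruling out ``accidental'' poset isomorphisms, i.e., intervals $[x,y]$ with $yx^{-1}\notin T$ that nonetheless happen to be poset-isomorphic to a dihedral reflection interval. This is the substantive content of Dyer's argument and uses finer features of Bruhat order, such as the lifting property and control over coatoms and maximal chains, to force the extra algebraic structure out of the purely combinatorial hypothesis.
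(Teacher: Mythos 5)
The paper does not prove this theorem: it is stated as a citation of Dyer's Proposition~3.3 in \cite{DyeComp1}, so there is no in-paper argument to compare against directly. Your outline does track the spirit of Dyer's approach — reduce to detecting, from the poset alone, which pairs $x<y$ in $[u,v]$ satisfy $yx^{-1}\in T$, then orient by rank — and your honest deferral of the converse (``ruling out accidental isomorphisms'') to Dyer's reflection-subgroup machinery is accurate as a description of where the real work lies.

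However, your proof of the direction you call ``easy'' is wrong. You claim that if $yx^{-1}=t\in T$ then ``left multiplication by $x^{-1}$ gives a poset isomorphism $[x,y]\cong[e,t]$.'' This fails on two counts. First, left multiplication by a fixed group element is \emph{not} an isomorphism of Bruhat order (unlike, say, $w\mapsto w_0w$); it need not send the interval $[x,y]$ to an interval at all. Second, even ignoring that, $[e,t]$ has height $\ell(t)$, which generally differs from $\ell(y)-\ell(x)$: take $x=s_1=2134$ and $y=(1,4)x=2431$ in $\mathfrak S_4$; then $\ell(y)-\ell(x)=3$ while $\ell(t)=\ell(4231)=5$, so $[x,y]$ and $[e,t]$ are not even equicardinal. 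Your statement of the dihedral-interval criterion itself (``$[x,y]$ is isomorphic to $[e,t']$ for a reflection $t'$ in a dihedral group with $\ell(t')=\ell(y)-\ell(x)$'') is the right one, but establishing the forward implication that a Bruhat-edge interval has this two-in-every-intermediate-rank shape is itself a nontrivial theorem of Dyer, obtained via his theory of reflection subgroups and Bruhat order on their cosets, not via a one-line conjugation trick. As it stands, then, the proposal correctly identifies the strategy and the crucial converse lemma, but the only step it purports to actually prove contains a genuine error, so the argument is not self-contained in either direction.
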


Call a directed graph, respectively,   a segment, a diamond, or a $k$-crown,  if it is isomorphic, respectively, to the first, the second or the third directed graph depicted in Figure~\ref{esporoma} (the third graph has $2k+2$ vertices).  We call by the same name a poset whose Hasse diagram is a segment, a diamond, or a $k$-crown.

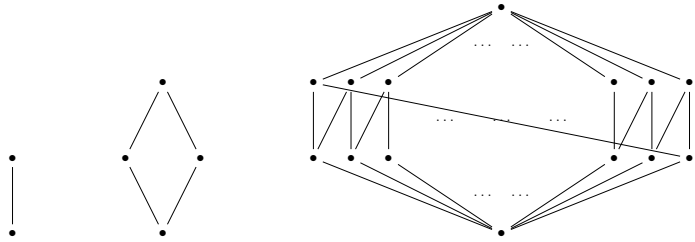
\begin{figure}[h]
    \centering
\scalebox{.5}{
    \begin{tikzpicture}
 \node (p) at (-13,0) {$\bullet$};
 \node (q) at (-13,2) {$\bullet$};

 \node (l) at (-9,0) {$\bullet$};
 \node (m1) at (-10,2) {$\bullet$};
 \node (m2) at (-8,2) {$\bullet$};
 \node (n) at (-9,4) {$\bullet$};

    \node (u) at (0,0) {$\bullet$};
    
    \node (a1) at (-5,2) {$\bullet$};
    \node (a2) at (-4,2) {$\bullet$};
    \node (a3) at (-3,2) {$\bullet$};
    \node (b1) at (-5,4) {$\bullet$};
    \node (b2) at (-4,4) {$\bullet$};
    \node (b3) at (-3,4) {$\bullet$};
    
	\node (a6) at (5,2) {$\bullet$};
    \node (a5) at (4,2) {$\bullet$};
    \node (a4) at (3,2) {$\bullet$};
    \node (b6) at (5,4) {$\bullet$};
    \node (b5) at (4,4) {$\bullet$};
    \node (b4) at (3,4) {$\bullet$};    
    
    \node (v) at (0,6) {$\bullet$};
    
    \node at (-0.5,5) {$\ldots$};
    \node at (0.5,5) {$\ldots$};
    
    \node at (-1.5,3) {$\ldots$};
    \node at (0,3) {$\ldots$};
    \node at (1.5,3) {$\ldots$};

    \node at (-0.5,1) {$\ldots$};
    \node at (0.5,1) {$\ldots$};
    
    \path[-]
(p) edge (q)

(l) edge (m1)
(l) edge (m2)
(n) edge (m1)
(n) edge (m2)

    (u) edge (a1)
    (u) edge (a2)
    (u) edge (a3)
    
    (u) edge (a4)
    (u) edge (a5)
    (u) edge (a6)
    
    (a1) edge (b1)
    (a1) edge (b2)
    (a2) edge (b2)
    (a2) edge (b3)
    (a3) edge (b3)
    
    (a4) edge (b4)
    (a4) edge (b5)
    (a5) edge (b5)
    (a5) edge (b6)
    (a6) edge (b6)
    
    (a6) edge (b1)
    
    (b1) edge (v)
    (b2) edge (v)
    (b3) edge (v)
    
    (b4) edge (v)
    (b5) edge (v)
    (b6) edge (v);    
\end{tikzpicture}
 }
    \caption{A segment, a diamond, and a $k$-crown}
    \label{esporoma}
\end{figure}

\begin{lem}
\label{sangiovanni}
Let $u,v \in \mathfrak S_n$, with $u \leq v$ and $\ell(v) - \ell(u) =3$ ($\ell(v) - \ell(u) =2$, respectively). Then the Hasse diagram of the Bruhat interval $[u,v]$ is a $k$-crown for some $k \in \{2,3,4\}$ (a diamond, respectively).
\end{lem}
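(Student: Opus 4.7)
The proof splits into the length-$2$ and length-$3$ cases, with the first a direct consequence of the lifting property and the second bootstrapping from it.

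For $\ell(v)-\ell(u)=2$, since $u<v$ there exists $s \in D_R(v) \setminus D_R(u)$. The lifting property yields $us, vs \in [u,v]$ with $\ell(us) = \ell(u)+1 = \ell(vs)$, and $us \neq vs$ (otherwise $u = v$). Conversely, for any $w$ with $u<w<v$ one has $\ell(w)=\ell(u)+1$; if $s \in D_R(w)$ then lifting applied to the pair $u<w$ gives $us \leq w$, so $w=us$ by rank, while if $s \notin D_R(w)$ then lifting applied to $w<v$ gives $w \leq vs$, so $w=vs$. Hence the open interval $(u,v)$ equals $\{us, vs\}$ and $[u,v]$ is a diamond.

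For $\ell(v)-\ell(u)=3$, I would bootstrap from the diamond property. For each atom $a$ of $[u,v]$, the subinterval $[a,v]$ has length $2$ and is a diamond by the previous case, so $a$ is covered by exactly two coatoms of $[u,v]$; dually, every coatom covers exactly two atoms. Hence the bipartite ``atoms versus coatoms'' cover graph $G$ is $2$-regular on both sides, forcing the number of atoms and the number of coatoms to be equal (call it $k$) and decomposing $G$ as a disjoint union of simple even cycles of length $\geq 4$. If one further shows that $G$ is connected, then the Hasse diagram of $[u,v]$ is precisely a $k$-crown with $k \geq 2$.

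The main remaining obstacle, and the only point at which the type-$A$ hypothesis is used essentially, is to verify (i) the connectedness of $G$ and (ii) the upper bound $k \leq 4$. I would handle both via a finite reduction. The isomorphism class of a rank-$3$ interval $[u,v]$ in $\mathfrak S_n$ is determined by the permutation $vu^{-1}$ together with the descent data of $u$, and $\ell(v)-\ell(u)=3$ tightly restricts the possibilities: a case analysis on the cycle type of $vu^{-1}$ yields a short finite list, and in particular the support of $vu^{-1}$ involves only a bounded number of letters. One can therefore reduce to a direct enumeration of rank-$3$ Bruhat intervals inside $\mathfrak S_m$ for small $m$, and verify in each case that $G$ is a single cycle of length $2k$ with $k \in \{2,3,4\}$. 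This is the delicate part, and no type-uniform argument would suffice, since in other Coxeter types the analogous bound on $k$ can be larger.
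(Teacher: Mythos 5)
The paper gives no proof of this lemma; it is a standard fact about Bruhat order (cf.\ \cite[Chapter~2, Exercise~33]{BB}, \cite{Dyeth}), so there is no paper proof to compare against. Your lifting-property argument for length $2$ is the standard correct one, and using the diamond property to deduce $2$-regularity of the atom--coatom cover graph is the right opening for length $3$.

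However, the proposal contains a misattribution that matters: neither the connectivity of that cover graph nor the bound $k \leq 4$ is a type-$A$ phenomenon. Connectivity follows from shellability of Bruhat intervals (Björner--Wachs), under which a rank-$3$ thin Eulerian interval is the face poset of a polygon, and the bound $k \leq 4$ holds uniformly across all Coxeter groups. The paper's own remark after Theorem~\ref{corone} makes the contrast explicit: $5$-crowns \emph{do} occur as support graphs of $3$-flipclasses, but only when $\ell(v)-\ell(u)=5$, and are forbidden for genuine length-$3$ intervals by this very lemma. So the assertion that ``no type-uniform argument would suffice'' is wrong --- the opposite is true, and a type-$A$ enumeration would obscure what is in fact a theorem about all Coxeter groups. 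Your finite-reduction sketch is also imprecise: the correct reduction goes through the reflection subgroup generated by the reflections labelling the edges of the interval (Dyer's theorem, cited as \cite{DyeComp1}, cf.\ Lemma~\ref{FeF'} and Theorem~\ref{h+1} in this paper), which for a length-$3$ interval is a direct product of symmetric groups with at most three Coxeter generators in total; the cycle type of $vu^{-1}$ together with the descent set of $u$ is not obviously sufficient data to pin down the isomorphism type of $[u,v]$. The architecture of your proposal is sound, but the two steps you defer to an ad-hoc enumeration constitute the substance of the result and are handled uniformly in the literature.
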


The following result-definition is specific to the symmetric group. 
\begin{defn}
Let $u,v \in \mathfrak S_n$.  Given a path  $\Gamma= \big( u {\longrightarrow} x   {\longrightarrow}  v\big)$ of length 2, there exists a unique  path $f(\Gamma)$ of the form $ \big( u {\longrightarrow} y {\longrightarrow}  v\big)$, with $y\neq x$. We call $f$ the {\em flip operator}.
\end{defn}

Next result immediately follows by \cite[Corollary~2.3]{BI}. 
\begin{lem}
\label{B-I}
    Let $\preceq$ be a reflection order. Let $\Gamma$ be a path of length 2 of the form   $\big(\bullet \stackrel{t}{\longrightarrow}  \bullet  \stackrel{r}{\longrightarrow} \bullet \big) $, with $t \prec r$. Let $\big(\bullet \stackrel{p}{\longrightarrow}  \bullet  \stackrel{q}{\longrightarrow} \bullet \big) $ be the form of  the flip of $\Gamma$. Then $ q \prec p$, $ t \prec p$, and $ q \prec r$.
\end{lem}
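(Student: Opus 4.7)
The plan is to restrict attention to the dihedral reflection subgroup $W'$ of $\mathfrak{S}_n$ generated by $t$ and $r$, since the reflection-order axiom controls the behavior of $\preceq$ precisely on such subgroups. The first step I would take is to show that all four reflections $t,r,p,q$ lie in $W'$. Writing the two paths as $u \xrightarrow{t} tu \xrightarrow{r} rtu = v$ and $u \xrightarrow{p} pu \xrightarrow{q} qpu = v$ gives the identity $qp = rt$; a standard fact---transparent in $\mathfrak{S}_n$, where $rt$ is either a double transposition or a $3$-cycle---is that every factorization of a given element as a product of two reflections uses reflections from a single dihedral reflection subgroup, forcing $p,q \in W'$.

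Next, I would run a case analysis on the isomorphism type of $W'$. In $\mathfrak{S}_n$ this is either $A_1 \times A_1$ (when $tr=rt$) with reflections $\{t,r\}$, or $A_2$ (when $tr$ has order $3$) with reflections $\{t,r,s\}$, where $s := trt = rtr$. In the first case, the flip is forced to be $(p,q)=(r,t)$, and the three desired inequalities all collapse to the given $t \prec r$. In the second case, the reflection-order axiom restricted to $W'$ yields a convex order on $\{t,r,s\}$, which must be one of $t \prec s \prec r$ or its reverse (the reflection $s$, corresponding to the sum root, always occupies the middle slot). Since $t \prec r$, we have $t \prec s \prec r$. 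Enumerating the factorizations of $rt$ inside $W'$ shows that the flip $(p,q)$ must be either $(s,t)$ or $(r,s)$; in both configurations, the inequalities $q \prec p$, $t \prec p$, and $q \prec r$ read off immediately from $t \prec s \prec r$.

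The one subtle ingredient---and what I would flag as the main (mild) obstacle---is the claim that $s$ lies strictly between $t$ and $r$ in every convex order on the reflections of an $A_2$-subgroup. This amounts to identifying $s$ geometrically as the reflection in the positive root $\alpha_t + \alpha_r$ of $W'$, so it is a brief root-system input rather than a purely group-theoretic one. Once it is in place, the remaining verifications are a handful of elementary comparisons, which is presumably why \cite{BI} packages the entire content as a short corollary.
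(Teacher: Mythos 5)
The proof has a genuine gap: the claim that $s := trt = rtr$ ``always occupies the middle slot'' of any reflection order restricted to the $A_2$ subgroup is false. The reflection-order axiom in type $A$ (as quoted in the paper as~(\ref{ordineriflessione})) says that for $a<b<c$, either $(a,b)\preceq(a,c)\preceq(b,c)$ or the reverse holds, so the middle reflection of the $A_2$ subgroup on $\{a,b,c\}$ is \emph{always} $(a,c)$, the one corresponding to the sum of the two simple roots. But $trt$ equals $(a,c)$ only when $\{t,r\}=\{(a,b),(b,c)\}$. When, say, $t=(a,b)$ and $r=(a,c)$, one gets $s=trt=(b,c)$, and the resulting order is $t\prec r\prec s$, not $t\prec s\prec r$. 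Concretely, with the lexicographic order and $t=(1,2)$, $r=(1,3)$: here $s=(2,3)$ and $(1,2)\prec(1,3)\prec(2,3)$, so $s$ is the largest, not the middle.

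This matters because the final step of your argument, ``in both configurations $(p,q)\in\{(s,t),(r,s)\}$ the inequalities read off immediately from $t\prec s\prec r$,'' collapses without that premise. In the example above, the factorization $(p,q)=(r,s)=((1,3),(2,3))$ would give $q=(2,3)$ and $p=(1,3)$, violating $q\prec p$; the reason the lemma nonetheless holds is that this factorization is \emph{not} the actual flip (it produces a path through $321$, which is not a Bruhat path from $e$ to $231$). Your argument never rules out this bad factorization. The actual content of the lemma is precisely that the flip is the ``good'' factorization, and this requires either the explicit flip rule (Proposition~\ref{regoladiflip}, which in one subcase even depends on $u$) or the cited result of Brenti--Incitti. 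Note also that the paper provides no proof at all here --- it just invokes \cite[Corollary~2.3]{BI} --- so there is nothing in the paper to compare against; you would be supplying a proof, and as written it does not close.
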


A  total order $\preceq$ on the set of transpositions $T$ is a \emph{reflection ordering} provided that,  given $a,b,c \in [n]$ with   $a<b<c$,   either
\begin{eqnarray}
\label{ordineriflessione}
 (a,b) \preceq (a,c) \preceq (b,c) \qquad \text{or} \qquad (b,c) \preceq (a,c) \preceq (a,b). 
\end{eqnarray}

The lexicographic order $(1,2) \preceq (1,3) \preceq \cdots \preceq (1,n) \preceq (2,3) \preceq \cdots \preceq (n-1,n)$ and the reverse lexicographic order $(n,n-1) \preceq (n,n-2) \preceq \cdots \preceq (n,1) \preceq (n-1,n-2) \preceq \cdots \preceq (2,1)$ are instances of reflection orderings.

 Let $u,v \in \mathfrak S_n$, $u \leq v$, and  $\Gamma= (u=x_0 \rightarrow x_1 \rightarrow \cdots \rightarrow x_h=v)$ be a path from $u$ to $v$ in $B(\mathfrak S_n)$. We define
  $h$ to be the {\em length} of $\Gamma$, denoted $\ell(\Gamma)$. %, and $\{x_i : i\in[0,r]\}$ to be the {\em support} of $\Gamma$, denoted $\operatorname{supp} (\Gamma)$. 
%Furthermore, we define the \emph{distance from $x$ to $y$}, denoted $\operatorname{d}(x,y)$, as the distance from $x$ to $y$ in $B(W)$, i.e.  $\min\{\ell(\Gamma) : \textrm{$\Gamma$ is a path from $x$ to $y$}\}$. Note that $\operatorname{d}(x,y) \leq \ell(x,y)$ and $\operatorname{d}(x,y) \equiv \ell(x,y) \pmod 2$. 
Fix a reflection ordering $\preceq $.  A  path $\Gamma= \big(x_0 \stackrel{t_1}{\longrightarrow} x_1 \stackrel{t_2}{\longrightarrow}  \cdots \stackrel{t_{h-1}}{\longrightarrow} x_{h-1}\stackrel{t_h}{\longrightarrow} x_h\big)$ in $B(\mathfrak S_n)$ is \emph{increasing (with respect to $\preceq $)} if $t_1 \preceq t_2\preceq \cdots \preceq t_h$.

 In \cite{K-L}, Kazhdan and Lusztig introduce two  families of polynomials $\{ P_{u,v} \}_{u,v \in \mathfrak S_n} \subseteq \mathbb Z[q]$ and $\{ R_{u,v} \}_{u,v \in \mathfrak S_n} \subseteq \mathbb Z[q]$, which are now known as the Kazhdan--Lusztig  and $R$-polynomials, respectively, of $\mathfrak S_n$ (actually, the group could be any Coxeter group, see \cite[\S 5]{BB}). Kazhdan--Lusztig polynomials and $R$-polynomials are equivalent: more precisely, given $u,v \in \mathfrak S_n$, it is possible to compute the set $\{ P_{x,y} \}_{x,y \in [u,v]}$ once one knows the set $\{ R_{x,y} \}_{x,y \in [u,v]}$, and vice versa.

We do not define Kazhdan--Lusztig and $R$-polynomials here because we work with another family of polynomials, the 
$\widetilde{R}$-polynomials, which are a rescaling of the $R$-polynomials:  the polynomial $\widetilde{R}_{u,v}(q)$ associated with $u$ and $v$ is the unique polynomial with natural coefficients satisfying 
$$
R_{u,v}(q)= q^{\frac{\ell(u,v)}{2}}\widetilde{R}_{u,v}(q^{\frac{1}{2}}-q^{-\frac{1}{2}}),
$$
(see, e.g., \cite[Proposition 5.3.1]{BB}).
The  following combinatorial interpretation due to Dyer (see \cite{DyeComp} and also \cite[Theorem~5.3.4]{BB}) gives another definition of the $\widetilde{R}$-polynomials. 
\begin{thm}
\label{Dyertilde}
Let  $\preceq $ be a reflection ordering, and  $u,v \in \mathfrak S_n$. Then
$$\widetilde{R}_{u,v}(q)=\sum q^{\ell(\Gamma)},$$
where the sum is over all increasing paths $\Gamma$ from $u$ to $v$.
\end{thm}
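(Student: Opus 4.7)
The plan is to argue by induction on $\ell(v)-\ell(u)$, showing that the right-hand side obeys the same recursion and boundary condition as $\widetilde R_{u,v}(q)$. Set $F^{\preceq}_{u,v}(q):=\sum_\Gamma q^{\ell(\Gamma)}$, the sum over increasing paths. The base case $u=v$ gives $F^{\preceq}_{u,u}(q)=1=\widetilde R_{u,u}(q)$, since only the empty path contributes. For the inductive step I would use the recursion that $\widetilde R$ inherits from the standard $R$-polynomial recursion: for $s\in D_L(v)$, one has $\widetilde R_{u,v}(q)=\widetilde R_{su,sv}(q)$ when $s\in D_L(u)$, and $\widetilde R_{u,v}(q)=\widetilde R_{su,sv}(q)+q\,\widetilde R_{u,sv}(q)$ otherwise.

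The heart of the matter is establishing this same recursion for $F^{\preceq}_{u,v}$. I would proceed in two stages. First, I prove that $F^{\preceq}_{u,v}(q)$ is independent of the reflection ordering $\preceq$. Any two reflection orderings are connected by a sequence of swaps of adjacent entries, each of which either is a trivial rearrangement or corresponds to reversing a dihedral triple $(a,b),(a,c),(b,c)$ with $a<b<c$. In the latter case, Lemma~\ref{B-I} provides a flip bijection: a $2$-step subpath $(\bullet\stackrel{t}{\longrightarrow}\bullet\stackrel{r}{\longrightarrow}\bullet)$ with $t\prec r$ flips to $(\bullet\stackrel{p}{\longrightarrow}\bullet\stackrel{q}{\longrightarrow}\bullet)$ with $q\prec p$, converting an increasing path under the old ordering into an increasing path under the new ordering of the same length, and hence leaving $F^{\preceq}_{u,v}$ unchanged.

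Given independence, I would then choose a reflection ordering in which the simple reflection $s\in D_L(v)$ is $\preceq$-maximum and partition increasing paths from $u$ to $v$ according to whether the final edge is labelled $s$. Paths whose last edge is labelled $s$ correspond, by removing the last edge, to increasing paths from $u$ to $sv$, contributing $q\,F^{\preceq}_{u,sv}(q)$; by the Lifting Property such paths exist if and only if $s\notin D_L(u)$, accounting for the dichotomy in the recursion. Paths avoiding $s$ entirely correspond, via left multiplication by $s$, to increasing paths from $su$ to $sv$ and contribute $F^{\preceq}_{su,sv}(q)$: the map sends $(x_0\stackrel{t_1}{\longrightarrow}\cdots\stackrel{t_h}{\longrightarrow}x_h)$ to $(sx_0\stackrel{st_1s}{\longrightarrow}\cdots\stackrel{st_hs}{\longrightarrow}sx_h)$ which, for paths avoiding $s$ under an ordering making $s$ maximum, is a well-defined increasing directed path from $su$ to $sv$. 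Combining the two classes exactly matches the $\widetilde R$ recursion, closing the induction.

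The main obstacle is making the left-multiplication bijection precise: one must verify that, for a path avoiding $s$ under a reflection ordering with $s$ maximum, conjugation by $s$ sends each label $t_i\neq s$ to another non-$s$ transposition, preserves the relative order of labels, and preserves each edge orientation in the Bruhat graph (i.e., $\ell(sx_{i-1})<\ell(sx_i)$). These facts rest on standard root-system properties of reflection orderings induced by reduced expressions of $w_0$ ending in $s$, together with the interplay between $s$ and Bruhat order; the verification is delicate but amenable to direct inspection once the correct ordering is fixed.
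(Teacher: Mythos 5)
The paper does not prove this theorem; it states it as a known result of Dyer, citing \cite{DyeComp} and \cite[Theorem~5.3.4]{BB}. So there is no in-paper proof to compare against, and your attempt must be judged on its own. Your high-level plan (induction on $\ell(v)-\ell(u)$ via the standard $\widetilde R$-recursion, together with independence of the reflection ordering) is the standard route and is sound in outline. The reflection-ordering-independence step, argued via flips and braid moves on reduced words of $w_0$, is essentially the same mechanism the paper later employs in Theorem~\ref{prec=prec} (there refined to individual flipclasses), and is fine modulo details.

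However, the recursion step contains a genuine gap, in two linked places. First, the assertion ``by the Lifting Property such paths exist if and only if $s\notin D_L(u)$'' is false. The Lifting Property does not imply $u\not\leq sv$ when $s\in D_L(u)\cap D_L(v)$. Concretely, in $\mathfrak S_3$ take $u=213=s_1$, $v=321=w_0$, $s=s_1\in D_L(u)\cap D_L(v)$, and the reverse lexicographic ordering $(2,3)\prec(1,3)\prec(1,2)$ (so $s=s_1=(1,2)$ is maximal). Then $sv=312$, $u\leq sv$, and the unique increasing path $213\xrightarrow{(2,3)}312\xrightarrow{(1,2)}321$ ends with label $s$. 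So the ``$q\,F^{\preceq}_{u,sv}$'' piece does not vanish when $s\in D_L(u)$, and your partition would force $F^{\preceq}_{u,v}=F^{\preceq}_{su,sv}+q\,F^{\preceq}_{u,sv}$, contradicting the recursion $\widetilde R_{u,v}=\widetilde R_{su,sv}$ in this case.

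Second, the conjugation map $t\mapsto sts$ does \emph{not} preserve the restriction of the reflection ordering to $T\setminus\{s\}$, so the left-multiplication bijection does not send increasing paths to increasing paths. In the same $\mathfrak S_3$ example, conjugation by $s_1$ swaps $(1,3)$ and $(2,3)$, reversing their order in reverse lex; accordingly the increasing path $123\xrightarrow{(2,3)}132\xrightarrow{(1,3)}312$ from $su$ to $sv$ is sent by conjugation to $213\xrightarrow{(1,3)}231\xrightarrow{(2,3)}321$, which is a valid directed path but is \emph{not} increasing. So the map is not a bijection between the two sets of increasing paths. These are not merely ``delicate'' verifications to be filled in: the stated bijections are wrong, and the bookkeeping does not match the recursion in the $s\in D_L(u)$ branch. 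The plan can likely be repaired (Dyer's original argument goes via $EL$-shellability, and BB's proof via the recursion is more careful about which term each class of paths contributes to), but as written the inductive step does not close.
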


\section{Time-support graphs and flipclasses}
\label{definizioni}
In this section, we give the definitions of certain new combinatorial objects.

%Given $u,v \in  S_n$ and $h \in \mathbb N$, we denote the set of paths of length $h$ from $u$ to $v$ in $B(S_n)$ by $P_h(u,v)$.
Given a directed graph  $G$, we denote the vertex set and the edge set of $G$ by $V(G)$ and $E(G)$, respectively. Given $h \in \mathbb N$ and $u,v \in V(G)$, we let $P^G_h(u,v)$ denote  the set of paths of length $h$  in $G$ from $u$ to $v$.
\begin{defn}
Let $G$ be an edge-labelled directed graph, $u,v \in V(G)$, and $h \in \mathbb N$.  Let $F$ be a subset of $P^G_h(u,v)$. 

The {\em support graph of $F$}, denoted $S_F$, is the edge-labelled  directed graph such that
\begin{enumerate}
\item $V(S_F)= \{ a \in V(G) : \text{ there exists a path in $F$ passing through $a$} \}$,
\item $E(S_F)= \{ a \stackrel{t}{\longrightarrow}  b \in E(G): \text{ there exists a path in $F$ containing $a  \stackrel{t}{\longrightarrow}  b$}\}$.
\end{enumerate}

The {\em time-support graph of $F$}, denoted $TS_F$, is the edge-labelled  directed graph such that
\begin{enumerate}
\item $V(TS_F)= \{ (a,i) \in G \times \{0,\ldots ,h\}: \text{ $ \exists \big(u=x_0 {\longrightarrow} x_1 {\longrightarrow} \cdots {\longrightarrow} x_{h}=v\big)$ in $F$ with $x_i = a$} \}$ 
\item $E(TS_F)= \{ (a,i) \stackrel{t}{\longrightarrow}  (b, i+1) : \\\text{ $ \exists \big(u=x_0 {\longrightarrow} x_1 {\longrightarrow} \cdots {\longrightarrow} x_{h}=v\big)$ in $F$ with $x_i = a$, $x_{i+1} = b$ and containing $x_i  \stackrel{t}{\longrightarrow}  x_{i+1}$}\}$.
\end{enumerate}
For $i \in \{0,1, \ldots, h\}$, we say that the path $\big(u=x_0 {\longrightarrow} x_1 {\longrightarrow} \cdots {\longrightarrow} x_{h}=v\big)$ passes through vertex $x_i$ at time $i$.
\end{defn}
As an example, consider the graph $G$ depicted in Figure~\ref{espopancia0}, on the left. Edges point upwards and can have arbitrary labels. Let $F$ be the set of all paths from $u$ to $v$ of length 4. Then the support graph $S_F$ is isomorphic to $G$, while the time-support graph $TS_F$ is depicted on the right.

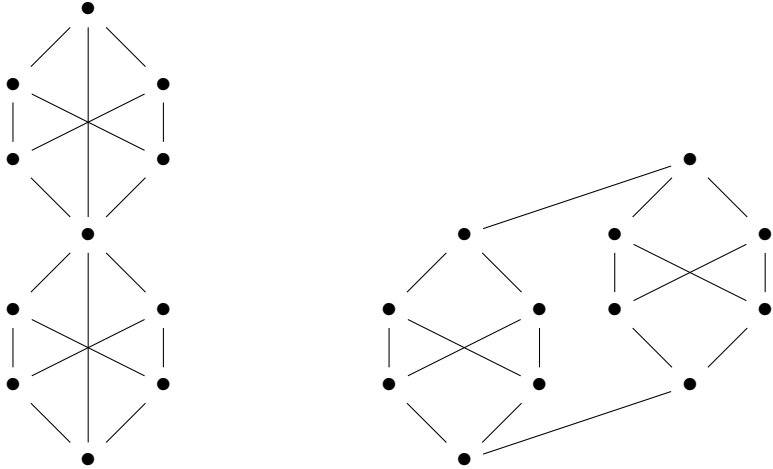
\begin{figure}[h] \label{espomateria}
\centering
    \begin{tikzpicture}
    \node (u) at (-3,0) {$\bullet$};
    
    \node (a1) at (-4,1) {$\bullet$};   
    \node (a2) at (-2,1) {$\bullet$};
       
	\node (b1) at (-4,2) {$\bullet$};
    \node (b2) at (-2,2) {$\bullet$};
    
    \node (c1) at (-3,3) {$\bullet$};
    
    \node (d1) at (-4,4) {$\bullet$};   
    \node (d2) at (-2,4) {$\bullet$};
       
	\node (e1) at (-4,5) {$\bullet$};
    \node (e2) at (-2,5) {$\bullet$};
    
    \node (f1) at (-3,6) {$\bullet$};

    \node (uu) at (2,0) {$\bullet$};
    
    \node (aa1) at (1,1) {$\bullet$};   
    \node (aa2) at (3,1) {$\bullet$};
       
	\node (bb1) at (1,2) {$\bullet$};
    \node (bb2) at (3,2) {$\bullet$};
    
    \node (cc1) at (2,3) {$\bullet$};
    
    \node (cc2) at (5,1) {$\bullet$};
    
    \node (dd1) at (4,2) {$\bullet$};   
    \node (dd2) at (6,2) {$\bullet$};
       
	\node (ee1) at (4,3) {$\bullet$};
    \node (ee2) at (6,3) {$\bullet$};
    
    \node (ff1) at (5,4) {$\bullet$};    
   
    \path[-]
    (u) edge (a1)
    (u) edge (a2)
    
    (a1) edge (b1)
    (a1) edge (b2)
    (a2) edge (b1)
    (a2) edge (b2)
    
    (b1) edge (c1)
    (b2) edge (c1)
    
    (c1) edge (d1)
    (c1) edge (d2)
    
    (d1) edge (e1)
    (d1) edge (e2)
    (d2) edge (e1)
    (d2) edge (e2)
    
    (e1) edge (f1)
    (e2) edge (f1)
    
    (u) edge (c1)
    (c1) edge (f1)

    (uu) edge (aa1)
    (uu) edge (aa2)
    
    (aa1) edge (bb1)
    (aa1) edge (bb2)
    (aa2) edge (bb1)
    (aa2) edge (bb2)
    
    (bb1) edge (cc1)
    (bb2) edge (cc1)
    
    (uu) edge (cc2)

    (cc2) edge (dd1)
    (cc2) edge (dd2)
    
    (dd1) edge (ee1)
    (dd1) edge (ee2)
    (dd2) edge (ee1)
    (dd2) edge (ee2)
    
    (ee1) edge (ff1)
    (ee2) edge (ff1)
    
    (cc1) edge (ff1)

    ;   
\end{tikzpicture}
    \caption{Support and time-support graphs}
    \label{espopancia0}
\end{figure}

\begin{rmk}
\label{osservazione}
\begin{enumerate}
\item
\label{ipo} $S_F$ and $TS_F$ are both acyclic directed graphs, and so both induce a poset structure on the set of vertices. Moreover, $TS_F$ is the Hasse diagram of the graded poset that it induces. 
\item 
\label{oss stesso}
The forgetful map $(x,i) \mapsto x$ from $TS_F$ to $S_F$ is a label preserving surjective map of directed graphs. It is an isomorphism of graphs  if and only it is injective. Equivalently, it is an isomorphism  if and only two paths of $F$ sharing the same vertex $x$ pass through $x$ at the same $i$.
\item  $S_F$ is a subgraph of $G$, while $TS_F$, in general, is not isomorphic to a subgraph of $G$.
\item There are two natural label-preserving injective maps  $i_{S_F} : F \to P^{S_F}_h(u,v)$ and  $i_{TS_F} : F \to P^{TS_F}_h(u,v)$. 
%(which are label-preserving: hence, given a total order on the set of labels, both maps send increasing paths to increasing paths). 
\end{enumerate}
\end{rmk}

\begin{defn}
Let $G$ be an edge-labelled directed graph, $u,v \in V(G)$, and $h \in \mathbb N$.  Let $F$ be a subset of $P^G_h(u,v)$.
\begin{itemize} 
\item The {\em $t$-vector} of $F$, denoted $t_F$,  is $t_F=(t_0, t_1, \ldots, t_{h})$, where $t_i$ is the cardinality of the  set
$$\{x : \text{ there exists a path  $\Gamma = \big(x_0 {\longrightarrow} x_1 {\longrightarrow} \cdots {\longrightarrow} x_{h} \big)$ in $F$ such that $x_i = x$} \}.$$

\item The {\em $\iota$-polynomial} of $F$, denoted  $\iota_F (x,y,t)$,  is 

$$\iota_F (x,y,t) = \sum_{(a,i) \in V(TS_F)} x^{\operatorname{indegree}(a,i)} \; y^{\operatorname{outdegree}(a,i)} \; t^i.$$
\end{itemize}
\end{defn}

\begin{rmk}
Notice that $t_F$ is the rank vector of the poset induced by $TS_F$ (see (\ref{ipo} of Remark~ \ref{osservazione}), i.e. $t_i$ is the cardinality of the  set
$\{(x,i) : (x,i) \in V(TS_F) \}.$
Notice also that the $t$-vector of $F$ is encoded in the $\iota$-polynomial of $F$: indeed, the $t$-vector is the sequence of the coefficients of  $\iota_F (1,1,t)$. Other specializations of the $\iota$-polynomial of $F$ give other pieces of  information on $TS_F$: for example, $\frac{1}{i!}\frac{\partial^{i} \iota}{\partial t^{i}}(x,1,0)$ (respectively, $\frac{1}{i!}\frac{\partial^{i} \iota}{\partial t^{i}}(1,y,0)$) gives the sequences of the in-degrees (respectively, out-degrees) of the vertices of $TS_F$ of time $i$.

\end{rmk}

\begin{defn}
\label{effettivo}
We say that a path $\Delta$ of $S_F$ (of $TS_F$, respectively) is {\em effective} if there exists a path $\Gamma$ in $F$ such that $\Delta$ is a subpath of $i_{S_F}(\Gamma)$ (of  $i_{TS_F}(\Gamma), respectively)$.
\end{defn}
Recall that $B(\mathfrak S_n)$ denotes the Bruhat graph of $\mathfrak S_n$. For short, we denote $P^{B(\mathfrak S_n)}_{h}(u,v)$ simply by $P_{h}(u,v) $.
Recall that, given $u,v \in \mathfrak S_n$, the set  $ P_{2}(u,v) $ has cardinality either 0 or 2. If $ P_{2}(u,v) = \{\Gamma_1, \Gamma_2\} $, we say that $\Gamma_2$ is the flip of $\Gamma_1$, and vice versa. 
%We call $f$ the {\em flip operator}.

\begin{pro}
\label{regoladiflip}
Let $\Gamma_1 = \big( u\stackrel{p}{\longrightarrow}  x\stackrel{q}{\longrightarrow}  v \big)$ be a path of length 2 in 
$B(\mathfrak S_n)$. Let $\Gamma_2 = \big( u\stackrel{s}{\longrightarrow}  y\stackrel{t}{\longrightarrow}  v\big)$ be the flip of $\Gamma_1$.
Then:
\begin{enumerate}
\item if $p$ and $q$ commute, then $s=q$ and $t=p$;
\item if $p = (a, b)$ and $q = (a,c)$:
\begin{itemize}
\item if $a<b<c$ or $a>b>c$, then $s=(b,c)$ and  $t= (a,b)$;
\item if $a<c<b$ or $a>c>b$, then  $s=(a,c)$ and $t= (b, c)$;
\item if $b<a<c$ or $b>a>c$, then:
%\begin{itemize}
%\item if $a,b,c$ appear in the inline notation of $u$ in the order $b,a,c$ or $c,a,b$, then $t= (b, c)$ and $s=(a,c)$;
%\item otherwise, $t= (a,b)$ and $s=(b,c)$.
%\end{itemize}
\begin{itemize}
\item if  $u^{-1}(b) < u^{-1}(a)< u^{-1}(c)$ or $u^{-1}(c) < u^{-1}(a)< u^{-1}(b)$, then  $s=(a,c)$ and $t= (b, c)$;
\item otherwise, $s=(b,c)$ and $t= (a,b)$.
\end{itemize}

\end{itemize}
\end{enumerate}
\end{pro}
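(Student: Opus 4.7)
The plan is to reduce the identification of the flip to an algebraic identity and then isolate the correct factorization via the standard length criterion on $\mathfrak S_n$. Since both paths connect $u$ to $v$, reading off $vu^{-1}$ along $\Gamma_1$ and along $\Gamma_2$ yields the identity $ts = qp$, so that $(s, t)$ is an ordered factorization of $qp$ into two transpositions, different from $(p, q)$.

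First I would dispose of the commuting case (1). If $p$ and $q$ have disjoint support, then $qp$ has cycle type $(2, 2)$, and its only ordered factorizations into two transpositions are $(p, q)$ and $(q, p)$. The requirement $y \neq x$ excludes the first one, forcing $(s, t) = (q, p)$.

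For case (2), I would write $p = (a, b)$ and $q = (a, c)$ with $a, b, c$ distinct, and compute $qp = (a, b, c)$ directly. A $3$-cycle admits exactly three ordered factorizations of the form $(s, t)$ with $ts = (a, b, c)$, namely $((a, b), (a, c))$, $((a, c), (b, c))$, and $((b, c), (a, b))$. The first of these is $(p, q)$, corresponding to $\Gamma_1$ itself, so the flip $\Gamma_2$ must realize one of the other two. To decide between them I would invoke the length criterion: for $i \neq j$ and $w \in \mathfrak S_n$, one has $\ell((i, j) w) > \ell(w)$ if and only if $w^{-1}(\min\{i, j\}) < w^{-1}(\max\{i, j\})$. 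Applying this to the two edges of $\Gamma_1$ translates the existence of $\Gamma_1$ into explicit inequalities among $u^{-1}(a), u^{-1}(b), u^{-1}(c)$ depending on the linear order of $a, b, c$; applying it to each candidate $(s, t)$ for $\Gamma_2$ gives further inequalities that must hold. Uniqueness of the flip then forces exactly one of the two remaining candidates to be consistent, and that is the flip.

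Carrying this out for the subcases in which $b$ or $c$ is the middle value among $a, b, c$ is routine: the conditions from $\Gamma_1$ already pin down the full linear order of $u^{-1}(a), u^{-1}(b), u^{-1}(c)$, and only one of the two candidate factorizations is compatible. The main obstacle is the third subcase, in which $a$ is the middle value (i.e., $b < a < c$ or $b > a > c$): here the existence of $\Gamma_1$ only forces $u^{-1}(b)$ to be extremal among $u^{-1}(a), u^{-1}(b), u^{-1}(c)$, leaving the relative position of $u^{-1}(a)$ and $u^{-1}(c)$ undetermined. A secondary split on whether $u^{-1}(a)$ falls strictly between $u^{-1}(b)$ and $u^{-1}(c)$ or not is needed, and it is exactly this dichotomy that appears in the statement, producing the factorization $((a, c), (b, c))$ in the former case and $((b, c), (a, b))$ in the latter.
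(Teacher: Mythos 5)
Your argument is correct and rests on the same core device as the paper's: verifying, via the length criterion for left multiplication by a transposition, that the displayed $(s,t)$ gives a genuine two-step Bruhat path with $ts=qp$. The paper states this as a direct check of the claimed factorization in each case, whereas you additionally frame it as picking the unique compatible candidate among the two (for disjoint $p,q$) or three (for a $3$-cycle) ordered factorizations of $qp$; this is a helpful expansion but not a different method.
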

\begin{proof}
In all these cases, one checks readily that $ts = qp$, and $u < su < tsu$ in the Bruhat order.
\end{proof}

\begin{defn}
Let $h \in \mathbb N$ and $u,v \in \mathfrak S_n$. 
\begin{itemize}
\item
For $i\in [h-1]$,  the {\em $i$-th flip operator} $f_i: P_{h}(u,v) \to P_{h}(u,v)$ is the map sending a path $\Gamma = \big(u=x_0 {\longrightarrow} x_1 {\longrightarrow} \cdots {\longrightarrow} x_{h}=v\big)$ to the path obtained from $\Gamma$ by replacing the subpath $\big( x_{i-1} {\longrightarrow} x_i {\longrightarrow}  x_{i+1}\big)$ with its flip. 
\item
We call the {\em $h$-flip group of $u$ and $v$}, denoted $\mathfrak F_{h,u,v}$, the subgroup of the symmetric group on  $P_{h}(u,v)$ generated by the set $\{f_i : i \in [h-1] \}$ of all flip operators.
\item
Given $\Gamma \in P_{h}(u,v)$, the {\em flipclass of $\Gamma$} is the orbit of $\Gamma$ under the action of $\mathfrak F_{h,u,v}$.
\item If $F$ is an orbit of $\mathfrak F_{h,u,v}$, we say that $F$ is an $h$-flipclass of $\mathfrak S_n$. 
\end{itemize}
\end{defn}

We are interested in support graphs and time-support graphs associated with flipclasses.
\begin{rmk}
\label{osservazione2}
\begin{enumerate}
\item Each flip operator is a non trivial involution, i.e. it has order 2.
\item If $F$ is a flipclass,  then the  poset induced on $V(S_F)$ by $S_F$  injects in $\mathfrak S_n$ under Bruhat order (however, given two incomparable vertices in $V(S_F)$, they could be comparable in Bruhat order).
\item Let $F$ be a flipclass. If $(a,i) \stackrel{t}{\longrightarrow}  (b,i+1) \in E(TS_F)$,   then $a \stackrel{t}{\longrightarrow} b \in E(S_F)$ (as well as $a \stackrel{t}{\longrightarrow} b \in E(B(\mathfrak S_n))$.
\item Recall that, if $x \longrightarrow y \in E(B(\mathfrak S_n))$, then $\ell(y) - \ell(x) \equiv 1$   $(\mathrm{mod} \; 2)$. Hence, given a flipclass $F$,  if $(a, i),(a,j) \in V(TS_F)$, then $i \equiv j$ $ (\mathrm{mod} \; 2)$.
\end{enumerate}
\end{rmk}

\begin{defn}
Let $n,m \in \mathbb N^+$ and $h \in \mathbb N$. Let $F$ be an $h$-flipclass  of $\mathfrak S_n$ and $F'$ be an $h$-flipclass  of $\mathfrak S_m$. 
We say that $(f,g)$ is an isomorphism from $F$ to $F'$  provided that:
\begin{itemize}
\item $f$ is a bijection from the set of elements of the paths in $F$ to the set of elements of the paths in $F'$ that induces a flips-preserving bijection from $F$ to $F'$;
\item $g$ is a bijection from the set of transpositions labeling the paths in $F$ to the set of transpositions labeling the paths in $F'$;
\item if $(t_1, \ldots t_h)$ is the ordered sequence of the labels of a path $\Gamma$ in $F$, then $(g(t_1), \ldots g(t_h))$ is the ordered sequence of the labels of the path $f(\Gamma)$ of $F'$;
%\item for any reflection ordering of $\mathfrak S_n$, there exists a reflection ordering of  $\mathfrak S_m$ such that  $g$ is order preserving.
\item $g$ is order preserving w.r.t. the lexicographical orders on $\mathfrak S_n$ and $\mathfrak S_m$. 
\end{itemize}
\end{defn}
Analogously, we may introduce the notion of isomorphism between support graphs and time-support graphs. Clearly, isomorphic flipclasses have isomorphic support graphs,  isomorphic  time-support graphs, and same $\iota$-polynomial.

Let $F$ be a flipclass. We also consider the unlabelled version $F^u$, $S^u_F$, and $TS^u_F$ obtained by forgetting the labels of the edges of $F$, $S_F$, and $TS_F$, respectively, together with the natural notions of isomorphisms.
In particular, given two $h$-flipclasses $F$ and $G$, an isomorphism from $F^u$ to $G^u$ is a bijection from the set of elements of the paths in $F$ to the set of elements of the paths in $G$ that induces a flips-preserving bijection from $F$ to $G$.

\begin{rmk}
\label{costante}
Let $f$ be a function with the set of Bruhat intervals in $\mathfrak S_n$ as domain. Recall from the introduction that we say that $f$ is a {\em combinatorial invariant} provided that $f$ is constant on the isomorphism classes (in the category of posets). 
Given $u,v \in \mathfrak S_n$, with $u\leq v$, and $h\in \mathbb N$, the set of the isomorphism classes of the unlabelled $h$-flipclasses of paths from $u$ to $v$, the set of the isomorphism classes of their unlabelled support graphs, the set of the isomorphism classes of the their unlabelled time-support graphs, and the set of their $\iota$-polynomials  are combinatorial invariants. %, i.e., they depend only on the isomorphism class of the interval $[u,v]$ as a poset.
\end{rmk}

\section{Flipclasses and number of increasing paths}

In this section, we prove a first basic result in support of our guiding principle that what happens for intervals should in fact be viewed  as shadows of finer facts for flipclasses. 
Theorem~\ref{Dyertilde} implies that an interval has the same number of increasing paths w.r.t. any reflection ordering. As far as we know, there are no direct proofs of this fact. Here we give an elementary and conceptual proof of the finer fact that a flipclass has the same number of increasing paths w.r.t. any reflection ordering. This sheds new light also on the case of intervals.

\begin{thm}
\label{prec=prec}
A flipclass $F$ has the same number of increasing paths w.r.t. any reflection ordering.
\end{thm}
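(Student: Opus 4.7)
The plan is to reduce the theorem to the case of two reflection orderings $\preceq, \preceq'$ differing by a single elementary move, then to exhibit an explicit bijection between the corresponding sets of increasing paths via flips inside $F$.

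For the reduction, I would invoke the classical bijection between reflection orderings on $\mathfrak S_n$ and reduced expressions of the longest element $w_0$. Under this bijection, commutation moves on reduced expressions correspond to swaps of two adjacent commuting transpositions in the reflection ordering (type $A_1\times A_1$), while braid moves correspond to reversing three consecutive transpositions forming a rank-2 subsystem of type $A_2$ (i.e.\ of the form $\{(a,b),(a,c),(b,c)\}$ with $a<b<c$). Since any two reduced expressions of $w_0$ are connected by such moves (Matsumoto--Tits), so are any two reflection orderings, and an easy induction reduces the theorem to the case of a single elementary swap.

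For an $A_1\times A_1$-swap of two commuting transpositions $s$ and $t$, consecutive labels of a Bruhat path are distinct and the pattern $sts$ cannot occur in an ascending path when $s,t$ commute (since $sts=t$ would force a length decrease). So the positions in a $\preceq$-increasing path $\Gamma\in F$ with consecutive labels $(s,t)$ are pairwise at distance at least $2$. By Proposition~\ref{regoladiflip}(1), the flip at any such position merely swaps the two labels; the resulting flips pairwise commute, and their composition gives an involution $\Phi$ on $F$ carrying $\preceq$-increasing paths bijectively onto $\preceq'$-increasing paths.

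For an $A_2$-swap of three transpositions $T_{A_2}=\{(a,b),(a,c),(b,c)\}$, flips at positions with both labels in $T_{A_2}$ do not merely swap labels, but by Proposition~\ref{regoladiflip}(2) they produce new labels still inside $T_{A_2}$. I would construct $\Phi$ by a bubble-sort procedure: starting from a $\preceq$-increasing $\Gamma\in F$, repeatedly choose a position $i$ with $t_i,t_{i+1}\in T_{A_2}$ and $t_i\prec t_{i+1}$ and apply $f_i$. By Lemma~\ref{B-I}, the new pair $(t'_i,t'_{i+1})$ satisfies $t'_{i+1}\prec t'_i$, together with $t_i\prec t'_i$ and $t'_{i+1}\prec t_{i+1}$; the latter two inequalities guarantee that the neighbouring mixed pairs at positions $(i-1,i)$ and $(i+1,i+2)$ remain $\preceq$-increasing. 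Since $\preceq$ and $\preceq'$ agree on comparisons that are not $T_{A_2}$-internal, this preserves the $\preceq'$-increasing property of all non-$T_{A_2}$-internal pairs. Termination follows from the strict decrease, under each flip, of the $T_{A_2}$-internal $\preceq$-ascent count, and the resulting path $\Phi(\Gamma)\in F$ is $\preceq'$-increasing. Running the symmetric procedure with $\preceq$ and $\preceq'$ exchanged provides the inverse, yielding the bijection.

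The main obstacle I expect is to verify confluence of the bubble-sort in the $A_2$ case, i.e.\ that different choices at each step yield the same final path $\Phi(\Gamma)$. I expect this to follow from a local analysis of the overlapping-flip case (positions $i$ and $i+1$, where the three involved labels are then forced to be the unique $\preceq$-increasing triple in $T_{A_2}$), again using Lemma~\ref{B-I} and Proposition~\ref{regoladiflip}. The thoroughly local nature of the whole argument is what delivers the elementary and conceptual character promised in the text.
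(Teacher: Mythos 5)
Your approach is essentially the paper's own: reduce, via the correspondence between reflection orderings and reduced expressions of $w_0$ together with Matsumoto--Tits, to a single commutation or braid move, and then construct the bijection $\phi$ (your $\Phi$) by flipping.

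Two remarks. First, the $A_1\times A_1$ case is simpler than you allow: all labels of an increasing path are distinct (consecutive labels differ automatically since $t^2=e$, and non-consecutive equal labels would contradict increasingness), so each of $s,t$ occurs at most once and the pattern $(s,t)$ occurs at at most one position. Your $\Phi$ is thus a single flip (or the identity) and there is nothing to commute. Second, the one genuine gap --- which you flag yourself --- is the well-definedness of $\Phi$ in the $A_2$ case. The same distinct-labels observation bounds the issue: each of $t_k,t_{k+1},t_{k+2}$ occurs at most once, and those that do occur must be at consecutive positions (every other label lies strictly before $t_k$ or strictly after $t_{k+2}$ in $\preceq$); so if all three occur, your ``bubble sort'' runs for exactly two steps on adjacent positions $i$ and $i+1$, and confluence is the single claim that the two flips commute on this length-$3$ subpath. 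The paper closes exactly this point: such a subpath lies in a coset of the rank-$2$ reflection subgroup $\langle t_k, t_{k+1}\rangle$, hence by Dyer's theorem is a path in a copy of $[e,w_0]\subset\mathfrak S_3$, where the four length-$3$ paths from $e$ to $w_0$ form a single $3$-flipclass on which $f_i,f_{i+1}$ generate a Klein four group; so the two flips commute and $\phi(\Gamma)$ is unambiguous (this is what ``flipping twice, in any order'' refers to, and it also makes $\phi$ manifestly an involution). Your argument would be complete with precisely this verification; the paper's direct case analysis just avoids the detour through a termination measure.
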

\begin{proof}
It is well known (see \cite{Dyeth}, Remark 6.16,(iv)) that the reflection orderings of $\mathfrak S_n$ are in bijection with the reduced expressions of the longest permutation:  $t_1 \preceq t_2 \preceq \cdots  \preceq t_{{n}\choose{2}}$  is a reflection ordering if and only if  there is a reduced expression $s_1 \cdots s_{{n}\choose{2}}$  of the longest permutation  such that $t_i = s_1 \cdots s_i  \cdots s_1$.
It is also well known (see \cite{Tit})  that two reduced expressions of the same permutation are linked by a finite sequence of braid moves. A braid move is the replacement of a consecutive subsequence $ss's \cdots$ with $s'ss' \cdots$, where the two sequences have as many factors as the order of the product $ss'$. We call a braid move short if $s$ and $s'$ commute.

Let $\preceq$ and $\preceq'$ be two reflection orderings, and let $\lhd$ and $\lhd'$ denote the covering relations of $\preceq$ and $\preceq'$, respectively. Denote by $t_i$ (respectively, $t_i'$) the $i$-th reflection in the order $\preceq$ (respectively, $\preceq'$).  We find a bijection $\phi$ from the set $I_{\preceq}$ of increasing paths of $F$ w.r.t. $\preceq$ to the  set $I_{\preceq'}$ of increasing paths of $F$ w.r.t. $\preceq'$.  By transitivity, we may suppose that the reduced expressions associated with $\preceq$ and $\preceq'$  differ by a braid move. Let $s_1 \cdots s_n$ be the reduced expression associated with $\preceq$.  
 
First suppose that the reduced expression associated with $\preceq'$ is obtained from this expression by a short braid move, say swapping $s_k$ and $s_{k+1}$ with $s_ks_{k+1}=s_{k+1}s_{k}$  Let $\sigma$ denote $s_1 \cdots s_{k-1}$. Then
$t_i = t_i'$ for $i\notin \{k, k+1\}$, $t_k =\sigma s_{k} \sigma^{-1}$,  $t_k' =\sigma s_{k+1} \sigma^{-1}$, 
$t_{k+1} =\sigma s_{k} s_{k+1} s_k \sigma^{-1} = t_k'$,  $t_{k+1}' =\sigma s_{k+1} s_k s_{k+1} \sigma^{-1}= t_k$, i.e., $\preceq'$ is obtained from $\preceq$ by changing the order of $t_k$ and $t_{k+1}$ ($t_k\lhd t_{k+1}$ while $t_{k+1}\lhd t_{k}$).
Note 
\begin{eqnarray}
\label{tt=tt}
t_kt_{k+1} = t_{k+1}t_k
\end{eqnarray}

Let $\Gamma \in I_{\preceq}$. If $t_k$ and $t_{k+1}$ are not both labels of $\Gamma$, then $\Gamma \in I_{\preceq'}$ and we set $\phi(\Gamma) = \Gamma$.  Suppose $t_k$ and $t_{k+1}$ are both labels of $\Gamma$. Then, since $t_{k}\lhd t_{k+1}$ and $\Gamma$ is increasing w.r.t. $\preceq$, the target of the edge labeled by $t_k$ coincides with the source of the edge labelled by $t_{k+1}$ (and clearly there are no other edges with labels in $\{t_k, t_{k+1}\}$). The flip of $\Gamma$ in correspondence with these two edges gives a path not only in $F$, but also in $I_{\preceq'}$ by Lemma~\ref{B-I}, Proposition~\ref{regoladiflip}, and (\ref{tt=tt}). We set this path be $\phi(\Gamma)$. 

\medskip
Now suppose that the reduced expression associated with $\preceq'$ is obtained from the expression associated with $\preceq$ by a  braid move that is not short, say by replacing $s_ks_{k+1}s_{k+2}$ with $s_{k+2}=s_k$ by $s_{k+1}s_{k}s_{k+1}$. For short, let $s=s_k$ (so $s=s_{k+2}$) and $r=s_{k+1}$.  Let $\sigma$ denote $s_1 \cdots s_{k-1}$. Then $t_i = t_i'$ for $i\notin \{k, k+2\}$,  $t_k =\sigma s \sigma^{-1}$,  $t_k' =\sigma r \sigma^{-1}$, 
$t_{k+2} =\sigma srs r s \sigma^{-1}= \sigma r  \sigma^{-1}$,  $t_{k+2}'  =\sigma rsrsr \sigma^{-1}= \sigma s \sigma^{-1}$,
i.e. $\preceq'$ is obtained from $\preceq$ by swapping $t_k$ and $t_{k+2}$. 
Indeed, $t_{k+1} =\sigma s r s \sigma^{-1}  =\sigma rsr \sigma^{-1}= t_{k+1}'$.
Note 
\begin{eqnarray}
\label{ttt=ttt}
t_kt_{k+1}t_k = t_{k+1}t_kt_{k+1}
\end{eqnarray}

Let $\Gamma \in I_{\preceq}$. If none of $t_k$, $t_{k+1}, t_{k+2}$ are  labels of $\Gamma,$ then $\Gamma \in I_{\preceq'}$ and we set $\phi(\Gamma) = \Gamma$.  If exactly one of $t_k$, $t_{k+1}, t_{k+2}$ is a label of $\Gamma$, then again $\Gamma \in I_{\preceq'}$ and we set $\phi(\Gamma) = \Gamma$.
If exactly two of $t_k$, $t_{k+1}, t_{k+2}$ are  labels of $\Gamma$, then, as above, the two edges must be consecutive in $\Gamma$. The flip of $\Gamma$ in correspondence with these two edges gives a path in $I_{\preceq'}$ (by Lemma~\ref{B-I} and Proposition~\ref{regoladiflip}) that we set to be  $\phi(\Gamma)$.

If exactly three of $t_k$, $t_{k+1}, t_{k+2}$ are  labels of $\Gamma$, then $\Gamma$ has a subpath of the form  $\big( \bullet  \stackrel{t_k}{\longrightarrow} \bullet   \stackrel{t_{k+1}}{\longrightarrow} \bullet  \stackrel{t_{k+2}}{\longrightarrow} \bullet\big)$. By flipping twice  (in any order) in correspondence with the two permutations in the middle (similarly to what happens in $\mathfrak S_3$), we obtain a path in $I_{\preceq'}$ by  Proposition~\ref{regoladiflip} and (\ref{ttt=ttt}).  We set this path to be  $\phi(\Gamma)$.

 The map $\phi$ so defined is an involution by Proposition~\ref{regoladiflip}. 
\end{proof}

After Theorem~\ref{prec=prec}, we can give the following definition.
\begin{defn}
Given a flipclass $F$, we let $c(F)$ be the number of increasing paths in $F$ (w.r.t. any reflection ordering).
\end{defn}

\section{Some results on flipclasses}

In this section, we provide some results on flipclassess, their support graphs, and their time-support graphs that are needed in what follows.

The following result  is basically an adjustment of \cite[Proposition~3.9]{Blanco} and  \cite[Proposition~6.3]{BilBre} to flipclasses. Its proof is therefore omitted.
\begin{pro}
\label{BiB}
Let $u,v \in \mathfrak S_n$ and $h \in \mathbb N$. Let  $F$ be an $h$-flipclass of paths from $u$ to $v$. Then the number $c(F)$ satisfies
$$1 \leq c(F) \leq \frac{|F|}{2^{h-1}}.$$
An increasing path of $F$  w.r.t. a reflection ordering $\preceq$ is the lexicographically-first one.
\end{pro}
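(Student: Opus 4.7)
The plan is to adapt the arguments of \cite[Proposition~3.9]{Blanco} and \cite[Proposition~6.3]{BilBre}, which handle the whole set $P_h(u,v)$, to the setting of a single flipclass $F$. The statement splits into three pieces---the lower bound $c(F)\geq 1$, the upper bound $c(F)\leq\frac{|F|}{2^{h-1}}$, and the identification of an increasing path with the lex-first one---and the first and third can be proved together by a clean lexicographic argument based on Lemma~\ref{B-I}.

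For the lower bound and the lex-first claim, I would fix the reflection ordering $\preceq$, order the paths of $F$ lexicographically by their label sequences, and let $\Gamma=\big(x_0 \stackrel{t_1}{\longrightarrow}\cdots \stackrel{t_h}{\longrightarrow} x_h\big)$ denote the lex-minimum element of $F$. I claim $\Gamma$ is increasing. Suppose toward contradiction that $t_{i+1}\prec t_i$ at some position $i\in[h-1]$, and set $\Gamma' := f_i(\Gamma)\in F$. By Lemma~\ref{B-I}, exactly one of $\Gamma$ and $\Gamma'$ is $\preceq$-ascending at position $i$, so $\Gamma'$ must be ascending; write its labels at positions $i$ and $i+1$ as $p$ and $q$ with $p\prec q$ (the other labels of $\Gamma'$ agree with those of $\Gamma$). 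Applying Lemma~\ref{B-I} to $\Gamma'$ in the role of the ascending path, with $\Gamma$ as its flip, yields $p\prec t_i$. Thus $\Gamma'$ agrees with $\Gamma$ on positions $1,\ldots,i-1$ and is strictly smaller at position $i$, whence $\Gamma'\prec\Gamma$ lexicographically---contradicting minimality.

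For the upper bound, I would follow the Billera--Brenti strategy of attaching to each increasing $\Gamma\in F$ a family $\Lambda_\Gamma\subseteq F$ of cardinality $2^{h-1}$, built from $\Gamma$ by flipping at chosen subsets of the $h-1$ interior positions, and then proving that $\Lambda_{\Gamma_1}\cap\Lambda_{\Gamma_2}=\emptyset$ for distinct increasing paths $\Gamma_1,\Gamma_2$. Concretely, for each $\epsilon=(\epsilon_1,\ldots,\epsilon_{h-1})\in\{0,1\}^{h-1}$ one sets $\Gamma^\epsilon := f_1^{\epsilon_1}\circ f_2^{\epsilon_2}\circ\cdots\circ f_{h-1}^{\epsilon_{h-1}}(\Gamma)$ for a fixed order of composition. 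Summing $|\Lambda_\Gamma|=2^{h-1}$ over the $c(F)$ increasing paths then gives $|F|\geq 2^{h-1}c(F)$, which is the desired inequality.

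The main obstacle is precisely the upper bound construction. The flips $f_1,\ldots,f_{h-1}$ do not commute on a single path---flipping at position $i$ replaces the labels $(t_i,t_{i+1})$ and can thereby alter whether positions $i-1$ and $i+1$ are ascents or descents---so the $2^{h-1}$ distinctness for a fixed $\Gamma$ and the disjointness of the families $\Lambda_\Gamma$ must be verified by a careful inductive normalization argument using the full inequalities $q\prec p$, $t\prec p$, $q\prec r$ of Lemma~\ref{B-I}. In spirit, one defines a canonical ``reduction to an increasing path'' which greedily undoes descents (each such undoing strictly decreasing the lex order, exactly as in the lower-bound argument) and verifies that this reduction recovers both $\Gamma$ and $\epsilon$ from $\Gamma^\epsilon$. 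This gives the required injectivity of $(\Gamma,\epsilon)\mapsto \Gamma^\epsilon$ and hence the bound $c(F)\leq |F|/2^{h-1}$.
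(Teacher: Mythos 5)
The paper itself omits the proof of Proposition~\ref{BiB}, stating only that it ``is basically an adjustment of \cite[Proposition~3.9]{Blanco} and \cite[Proposition~6.3]{BilBre} to flipclasses.'' Your strategy of adapting those references is therefore exactly what the authors intend, and the observation that a flipclass $F$ is a flip-closed subset of $P_h(u,v)$---so any argument for $P_h(u,v)$ carried out entirely in terms of the flip operators should restrict to $F$---is the right key to the adaptation.

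Your treatment of the lower bound and the lex-first claim is correct and is the standard descent-removal argument. One small point: you invoke Lemma~\ref{B-I} to assert that ``exactly one of $\Gamma$ and $\Gamma'$ is $\preceq$-ascending at position $i$.'' As stated, Lemma~\ref{B-I} only gives the implication (ascending $\Rightarrow$ flip is descending); it does not, by itself, rule out that both length-$2$ paths are descending. That stronger statement is true (it is the uniqueness of the increasing maximal chain in a diamond, equivalently $\widetilde{R}_{u,v}=q^2$ for $\ell(v)-\ell(u)=2$, and it is also part of \cite[Corollary~2.3]{BI} from which Lemma~\ref{B-I} is distilled), but it is worth citing it correctly rather than attributing it to the lemma as quoted.

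For the upper bound there is a genuine gap, which you yourself flag. The map $(\Gamma,\epsilon)\mapsto\Gamma^\epsilon=f_1^{\epsilon_1}\circ\cdots\circ f_{h-1}^{\epsilon_{h-1}}(\Gamma)$ is a reasonable candidate, but you do not prove that it is injective, and the difficulty is not cosmetic: after applying $f_{h-1}^{\epsilon_{h-1}}$ and then flipping at $h-2$, Lemma~\ref{B-I} does not control the order relation between the two new interior labels, so the descent set of $\Gamma^\epsilon$ need not be $\{i:\epsilon_i=1\}$. This means the naive ``greedy descent removal'' you propose as an inverse does not obviously recover $\epsilon$ from $\Gamma^\epsilon$, and distinctness of the $2^{h-1}$ paths, as well as disjointness of the families $\Lambda_{\Gamma_1}$ and $\Lambda_{\Gamma_2}$, remains to be established. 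As it stands, the second half of your argument is a program, not a proof; to complete it you would need to either pin down the correct order of composition and prove the claimed invariant by induction on the number of flips, or switch to whatever encoding Billera--Brenti actually use (their argument is phrased at the level of the cd-index of the Bruhat interval, and restricting it to a single flipclass is the content that still needs to be written down).
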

\begin{rmk}
\label{colexi}
We notice that also the colexicographically-last  path of $F$ is an increasing path (possibly coinciding with the lexicographically-first one).
\end{rmk}

\begin{lem}
\label{almeno2}
Let  $h \in \mathbb N$ and $u,v \in \mathfrak S_n$. Let  $F$ be an $h$-flipclass of paths from $u$ to $v$.
Each vertex $(x,i)$ of $TS_{F}$ with $i>1$  has in-degree at least 2.
Each vertex $(x,i)$ of $TS_{F}$ with $i < h-1$ has out-degree at least 2.
\end{lem}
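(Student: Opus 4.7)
The plan is to use the flip operators themselves to produce the required extra edges in $TS_F$. Recall that for $j\in[h-1]$, the flip $f_j$ replaces only the middle vertex $x_j$ of a path, keeping $x_{j-1}$ and $x_{j+1}$ fixed, and by Proposition~\ref{regoladiflip} the resulting path still lies in the same flipclass $F$. So flipping at position $j$ is a tool to modify the vertex at time $j$ while preserving the vertices at times $j-1$ and $j+1$.

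To prove the in-degree statement, I would fix a vertex $(x,i)\in V(TS_F)$ with $i>1$, and pick any path $\Gamma=(u=x_0\to x_1\to\cdots\to x_h=v)\in F$ with $x_i=x$; such a $\Gamma$ exists by the very definition of $V(TS_F)$. Since $i\geq 2$, the index $i-1$ belongs to $[h-1]$, so $f_{i-1}$ is defined. Set $\Gamma'=f_{i-1}(\Gamma)\in F$. Then $\Gamma'$ is obtained from $\Gamma$ by replacing $x_{i-2}\to x_{i-1}\to x_i$ with $x_{i-2}\to y\to x_i$ for some $y\neq x_{i-1}$, while $x_i=x$ is unchanged. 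Hence $\Gamma'$ passes through $y$ at time $i-1$ and $x$ at time $i$, so the edge $(y,i-1)\to (x,i)$ lies in $E(TS_F)$; since $y\neq x_{i-1}$, this edge is distinct from the in-edge $(x_{i-1},i-1)\to(x,i)$ already contributed by $\Gamma$. Therefore $(x,i)$ has in-degree at least $2$.

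The out-degree assertion is entirely symmetric: given $(x,i)\in V(TS_F)$ with $i<h-1$, pick $\Gamma\in F$ with $x_i=x$ and apply $f_{i+1}$, which is defined because $i+1\in[h-1]$. The resulting path $f_{i+1}(\Gamma)\in F$ passes through $x$ at time $i$ and through some $z\neq x_{i+1}$ at time $i+1$, producing a second out-edge from $(x,i)$.

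I do not see any real obstacle; the only care needed is index bookkeeping to verify that $f_{i-1}$ (respectively $f_{i+1}$) is defined precisely under the hypothesis $i>1$ (respectively $i<h-1$), and that the definition of flip guarantees the newly introduced vertex at time $i-1$ (respectively $i+1$) differs from the original one.
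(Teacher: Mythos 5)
Your proof is correct and follows essentially the same approach as the paper's: pick a path through $(x,i)$, apply the flip operator $f_{i-1}$ (respectively $f_{i+1}$) to produce a second in-edge (respectively out-edge). The paper handles the out-degree case by invoking upside-down symmetry rather than spelling out $f_{i+1}$ explicitly, but the argument is the same.
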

\begin{proof}
By upside down symmetry, we need to prove only the first assertion.

By the definition of the vertex set of $TS_{F}$, there exists a path in $F$ passing through $x$ at time $i$. Since $i>1$, we can apply the $(i-1)$-th flip operator to this path and obtain another path passing  through $x$ at time $i$. The edges with target $(x,i)$ in these two paths have different sources. 
\end{proof}

We shall use several times the following property: we give it the name $P_{\bar{x},\bar{y}}$ to easily refer to it.
\begin{lem}
\label{P property}
Let  $h \in \mathbb N$ and $i \in [h-1]$. Let $u,v \in \mathfrak S_n$, with $u < v$. Let  $F$ be an $h$-flipclass of paths from $u$ to $v$. The following property holds.
\begin{enumerate}
\item[($P_{\bar{x},\bar{y}}$)] Given two vertices $\bar{x}=(x,i-1)$ and $\bar{y}=(y,i+1)$ of $TS_{F}$, the number of paths of length 2 from $\bar{x}$ to $\bar{y}$  in $TS_{F}$  is 0, 1 or 2.
\end{enumerate}
\end{lem}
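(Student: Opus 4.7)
My plan is to project any path of length $2$ in $TS_F$ to a path in the Bruhat graph $B(\mathfrak{S}_n)$ via the forgetful map, and then invoke the fact recalled just before Proposition~\ref{regoladiflip} that $|P_2(x,y)| \in \{0, 2\}$ for every pair of elements $x, y \in \mathfrak{S}_n$.

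First I would observe that every path of length $2$ in $TS_F$ from $\bar{x}$ to $\bar{y}$ passes through a single intermediate vertex $(z, i) \in V(TS_F)$ and thus has the form $(x, i-1) \xrightarrow{t} (z, i) \xrightarrow{r} (y, i+1)$. By item~(3) of Remark~\ref{osservazione2}, the labels on the edges of $TS_F$ coincide with those inherited from $B(\mathfrak{S}_n)$, so $t$ and $r$ are forced to equal $zx^{-1}$ and $yz^{-1}$ respectively once $z$ is chosen. In particular, the entire path is determined by its middle vertex $z$.

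I would then apply the forgetful projection $(a, j) \mapsto a$: it sends the above path to the path $x \xrightarrow{t} z \xrightarrow{r} y$ in $B(\mathfrak{S}_n)$, which is an element of $P_2(x, y)$. By the previous step, distinct paths of length $2$ in $TS_F$ from $\bar{x}$ to $\bar{y}$ have distinct middle vertices and hence project to distinct elements of $P_2(x, y)$. Therefore the number of paths of length $2$ in $TS_F$ from $\bar{x}$ to $\bar{y}$ is bounded above by $|P_2(x, y)|$, which is $0$ or $2$; the count is thus $0$, $1$, or $2$, as required. There is no real obstacle here; the argument is a short counting step relying on the fact that $TS_F$ inherits its labels from $B(\mathfrak{S}_n)$, together with the property, specific to the symmetric group, that length-$2$ paths come in flip-pairs.
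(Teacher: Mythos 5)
Your proposal is correct and matches the paper's proof in all essentials: you project each length-$2$ path of $TS_F$ to a length-$2$ path in $B(\mathfrak S_n)$ via the forgetful map, argue that this projection is injective (because a path in $TS_F$ is determined by its middle vertex $(z,i)$, and at fixed time $i$ distinct vertices have distinct first coordinates), and then invoke $|P_2(x,y)| \in \{0,2\}$. The only cosmetic difference is that the paper phrases the injectivity via "distinct vertices $(a,i)$, $(b,i)$ have distinct first components," while you phrase it via "once $z$ is chosen the labels are forced"; both amount to the same counting step.
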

\begin{proof}
By definition, every edge of $TS_{F}$ is effective. Hence a path $\bar{\Delta}$ of length 2 from $\bar{x}$ to $\bar{y}$  in $TS_{F}$ determines a path $\Delta$ of length 2 from $x$ to $y$ in $B(\mathfrak S_n)$. 
Moreover, fix  $i$; if $(a, i)$ and $(b,i)$ are distinct vertices of $TS_{F}$, then $a$ and $b$ are distinct element of $\mathfrak S_n$, so distinct paths $\bar{\Delta}$ and $\bar{\Delta'}$ determine distinct paths $\Delta$ and $\Delta '$.
Since  the number of paths of length 2  from $x$ to $y$ in $B(\mathfrak S_n)$ is either 0 or 2, we have the assertion. 
\end{proof}
\begin{rmk}
We notice explicitely that we cannot exclude that there is only one path of length 2 from $\bar{x}=(x,i-1)$ to $\bar{y}=(y,i+1)$  in $TS_{F}$. Indeed, given such a path $\bar{\Delta}$, the two edges of it could (a priori) come from two different paths in $F$, so $\bar{\Delta}$ could be non-effective, and there could be no path in $F$ we can apply the $i$-th flip to in order to obtain a second path of length 2 from $\bar{x}$ to $\bar{y}$  in $TS_{F}$.
\end{rmk}

\begin{lem}
\label{equivariante}
Let  $h \in \mathbb N$. Let $u,v \in \mathfrak S_n$, with $u < v$. Let  $F$ be an $h$-flipclass of paths from $u$ to $v$. Suppose that, for all $i \in [h-1]$ and all $(a,i-1), (b,i+1) \in V(TS_{F})$, the interval $[(a,i-1),(b,i+1)]$ in the poset induced by $TS_{F}$, if nonempty, is a diamond. Then, for $i \in [h-1]$,  there is a natural concept of  i-th flip operator also in $P_{h}^{TS_{F}}((u,0),(v,h))$),  and the map $i_{TS_{F}}$ is equivariant with respect to the action of the flip operators.
\end{lem}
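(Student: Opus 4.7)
The plan is to use the diamond hypothesis to transport the definition of the flip operators from $F$ directly onto $P_h^{TS_F}((u,0),(v,h))$, and then to deduce equivariance from the fact that each such diamond has exactly two middle vertices.

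First I define $\bar{f}_i$ on $P_h^{TS_F}((u,0),(v,h))$ for each $i\in [h-1]$. Given a path $\bar{\Gamma} = \big((u,0) \to (x_1,1) \to \cdots \to (v,h)\big)$, the chain $(x_{i-1},i-1) < (x_i, i) < (x_{i+1}, i+1)$ shows that the interval $J := [(x_{i-1},i-1),(x_{i+1},i+1)]$ in the poset induced by $TS_F$ is nonempty, so by hypothesis $J$ is a diamond. Because that poset is graded (Remark~\ref{osservazione}(\ref{ipo})) with rank of $(a,k)$ equal to $k$, and because the outer elements of $J$ differ in rank by $2$, both middle vertices of $J$ must have second coordinate $i$. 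Denote the one distinct from $(x_i, i)$ by $(y, i)$. Since $TS_F$ is itself the Hasse diagram of its induced poset, the cover relations in $J$ correspond to edges of $TS_F$, so $(x_{i-1}, i-1) \to (y, i)$ and $(y, i) \to (x_{i+1}, i+1)$ both lie in $E(TS_F)$. I set $\bar{f}_i(\bar{\Gamma})$ to be the path obtained from $\bar{\Gamma}$ by replacing its middle vertex $(x_i, i)$ with $(y, i)$. Applying the same construction to $\bar{f}_i(\bar{\Gamma})$ recovers $\bar{\Gamma}$, so $\bar{f}_i$ is an involution and the collection $\{\bar{f}_i\}_{i \in [h-1]}$ generates a well-defined action on $P_h^{TS_F}((u,0),(v,h))$.

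Next, for equivariance I fix $\Gamma = (u = x_0 \to x_1 \to \cdots \to x_h = v) \in F$ and compare $i_{TS_F}(f_i(\Gamma))$ with $\bar{f}_i(i_{TS_F}(\Gamma))$. The two paths certainly coincide outside position $i$. The flip $f_i(\Gamma) \in F$ is obtained by replacing $x_i$ with the unique element $y' \in \mathfrak{S}_n$, distinct from $x_i$, for which $x_{i-1} \to y' \to x_{i+1}$ is a path in $B(\mathfrak{S}_n)$ (see the definition of the flip operator and Proposition~\ref{regoladiflip}). Since $f_i(\Gamma) \in F$, the vertex $(y', i)$ and the edges $(x_{i-1}, i-1) \to (y', i)$, $(y', i) \to (x_{i+1}, i+1)$ all belong to $TS_F$; in particular $(y', i) \in J$. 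Because $J$ has exactly two middle vertices, $(x_i, i)$ and $(y, i)$, and because $y' \neq x_i$, we conclude $(y', i) = (y, i)$. This yields $i_{TS_F}(f_i(\Gamma)) = \bar{f}_i(i_{TS_F}(\Gamma))$, which is the required equivariance.

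The only point demanding care is the identification of the poset-side flip partner $(y, i)$ with the Bruhat-side flip partner $y'$, and this reduces, as sketched above, to the diamond hypothesis forcing the middle rank of $J$ to consist of exactly two elements.
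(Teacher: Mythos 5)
Your proof is correct and follows the same route as the paper's: define the flip on $P_h^{TS_F}$ by passing to the other middle vertex of the diamond $[(x_{i-1},i-1),(x_{i+1},i+1)]$, then check equivariance. The one place where your write-up is actually more careful than the paper's is the identification of the two "flip partners": the paper asserts that the Bruhat flip of $\tilde{p}$ is the path obtained by substituting the poset-side middle vertex $a'_i$ "clearly," whereas you supply the missing justification — namely that $f_i(\Gamma)\in F$ forces $(y',i)$ and its two incident edges into $TS_F$, hence $(y',i)$ is a middle vertex of the diamond, and uniqueness of the second middle vertex gives $(y',i)=(y,i)$. That is exactly the right argument to fill the paper's "clearly."
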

\begin{proof}
Let $p = \big((u=a_0,0), (a_1, 1), \ldots, (a_{h-1}, h-1), (v=a_h, h) \big)\in P_{h}^{TS_{F}}((u,0),(v,h))$. Let $i\in [h-1]$. Then, the interval $[(a_{i-1},i-1),(a_{i+1},i+1)]$ in  the poset induced by $TS_{F}$ is nonempty. Hence, it is a diamond. The two middle elements are $(a_i , i)$ and another element $(a'_i , i)$. The $i$-th flip of $p$ is defined to be the path 
$p' = 
\big( (u,0), \ldots, 
(a_{i-1}, i-1), (a'_{i}, i), (a_{i+1}, i+1),
\ldots, (v, h) \big)$.
Suppose now that $p$ comes from a path 
$\tilde{p}\in F$, i.e., 
$p = i_{TS_{F}}(\tilde{p})$. Then, clearly one has 
$\tilde{p} = (u, a_1, \dots, a_{h-1}, v)$ and its $i$-th flip is 
$\tilde{p}' = (u, \dots, a_{i-1}, a'_i, a_{i+1}, \ldots, v)$.
Thus one has $p' = i_{TS_{F}}(\tilde{p}')$.
\end{proof}
\begin{lem}
\label{tutti effettivi}
Let  $h \in \mathbb N$. Let $u,v \in \mathfrak S_n$, with $u < v$. Let  $F$ be an $h$-flipclass of paths from $u$ to $v$. TFAE.
\begin{enumerate}
\item
\label{1} The map $i_{TS_{F}}$ is bijective, i.e. every path in $TS_{F}$ is effective.
\item
\label{2} Given $(a,i), (b,i+2) \in V(TS_{F})$ with $(a,i) \leq (b,i+2)$, the interval $[(a,i),(b,i+2)]$ is a diamond. Moreover, the flip operators act transitively on  $P_{h}^{TS_{F}}((u,0),(v,h))$.
\end{enumerate}
\end{lem}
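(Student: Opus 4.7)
The plan is to prove both implications using property $P_{\bar{x},\bar{y}}$ from Lemma \ref{P property} and the equivariance supplied by Lemma \ref{equivariante}. The bridge between the two conditions is that the diamond hypothesis in Lemma \ref{equivariante} for indices $(a,j-1),(b,j+1)$ with $j\in [h-1]$ is precisely the diamond hypothesis in (\ref{2}) after the index shift $j=i+1$, so that whenever (\ref{2}) holds the flip operators are well-defined on $P_h^{TS_F}((u,0),(v,h))$ and the map $i_{TS_F}$ is equivariant.

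For (\ref{1})$\Rightarrow$(\ref{2}): assume every path of $TS_F$ is effective. Fix $(a,i)\leq(b,i+2)$ in the poset induced by $TS_F$, so by definition there is at least one length-$2$ path from $(a,i)$ to $(b,i+2)$. By $P_{\bar{x},\bar{y}}$ the number of such paths is $1$ or $2$. If it were $1$, say $(a,i)\to(c,i+1)\to(b,i+2)$, effectiveness would yield $\Gamma\in F$ whose image in $TS_F$ contains this subpath; applying $f_{i+1}$ to $\Gamma$ produces $\Gamma'\in F$ whose image in $TS_F$ realizes a length-$2$ path $(a,i)\to(c',i+1)\to(b,i+2)$ with $c'\neq c$, contradicting uniqueness. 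Hence there are exactly $2$ such paths and the interval is a diamond. Since now Lemma \ref{equivariante} applies, the $h$-flip group acts on $P_h^{TS_F}((u,0),(v,h))$ and $i_{TS_F}$ is equivariant. Because $i_{TS_F}$ is bijective by (\ref{1}) and $F$ is a single orbit of $\mathfrak F_{h,u,v}$ on itself, the transported action on $P_h^{TS_F}((u,0),(v,h))$ is transitive, giving (\ref{2}).

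For (\ref{2})$\Rightarrow$(\ref{1}): the diamond condition guarantees Lemma \ref{equivariante} applies, so flip operators are defined on $P_h^{TS_F}((u,0),(v,h))$ and $i_{TS_F}$ is equivariant. Injectivity of $i_{TS_F}$ is already known from Remark \ref{osservazione}(4), so it suffices to prove surjectivity. Pick any $\Gamma_0\in F$; for any $p\in P_h^{TS_F}((u,0),(v,h))$, the transitivity in (\ref{2}) produces a word $f_{i_k}\cdots f_{i_1}$ of flip operators with $(f_{i_k}\cdots f_{i_1})(i_{TS_F}(\Gamma_0))=p$, and equivariance yields $p=i_{TS_F}\bigl((f_{i_k}\cdots f_{i_1})(\Gamma_0)\bigr)$. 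Hence $i_{TS_F}$ is bijective, i.e.\ every path in $TS_F$ is effective. The main obstacle is the first direction, where one must rule out the degenerate situation described in the remark after Lemma \ref{P property} in which a single length-$2$ path of $TS_F$ could in principle arise from two different paths of $F$ glued together; effectiveness coming from (\ref{1}) is exactly what is needed to promote such a path to a subpath of a single element of $F$, which can then be flipped to produce the companion path in $TS_F$.
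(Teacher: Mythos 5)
Your proof is correct and takes essentially the same approach as the paper: in the forward direction, lift a length-$2$ path via effectiveness to a path in $F$, apply the flip $f_{i+1}$ to manufacture the companion middle vertex, and invoke $P_{\bar{x},\bar{y}}$ to conclude the interval is a diamond; then use Lemma~\ref{equivariante} together with bijectivity to transport transitivity from $F$ to $P_h^{TS_F}((u,0),(v,h))$, and reverse the argument for the converse. The only cosmetic difference is that the paper applies the hypothesis to a full path $\bar{\Delta}$ from $(u,0)$ to $(v,h)$ passing through $(a,i)$ and $(b,i+2)$ rather than to the length-$2$ subpath directly, but these are equivalent.
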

\begin{proof}
Suppose (\ref{1}). Let  $(a,i), (b,i+2) \in V(TS_{F})$  with $(a,i) \leq (b,i+2)$, and consider a path $\bar{\Delta}$ from $(u,0)$ to $(v,h)$ passing through both $(a,i)$ and $(b,i+2)$. By hypothesis, there exists $\Delta$ in $F$ such that $i_{TS_{F}}(\Delta)= \bar{\Delta}$. The $(i+1)$-th flip operator sends $\Delta$ to a path $\Delta'$ that also  passes through $a$ and $b$ at time $i$ and $i+2$, respectively. This implies that the path $i_{TS_{F}}(\Delta')$ passes  through both $(a,i)$ and $(b,i+2)$. Hence,  by Lemma~\ref{P property}, the interval $[(a,i),(b,i+2)]$ is a diamond. Thus, by Lemma~\ref{equivariante}, the map $i_{TS_{F}}$ is equivariant with respect to the action of the flip operators. Hence, transitivity of the flip group on   $F$ implies  transitivity of the flip group on  $P_{h}^{TS_{F}}((u,0),(v,h))$. So, (\ref{2}) holds. 

Vice versa, suppose (\ref{2}).  Recall that $i_{TS_{F}}$ is always injective.  By Lemma~\ref{equivariante}, the map $i_{TS_{F}}$ is equivariant with respect to the action of the flip operators. Thus, transitivity of the flip group on  $P_{h}^{TS_{F}}((u,0),(v,h))$ implies  surjectivity of the map $i_{TS_{F}}$. Hence, (\ref{1}) holds. 
\end{proof}

The following result treats the case of $h$-flipclasses of intervals of length $h$.
\begin{lem}
\label{intervallo}
Let  $h \in \mathbb N$. Let $u,v \in \mathfrak S_n$, with $u < v$ and $\ell(v) - \ell(u)=h$. 
%Let $F$ be the flipclass of a fixed path from $u$ to $v$ of length $h$. 
Then 
\begin{enumerate}
\item
\label{111} 
%all  paths of length $h$ from $u$ to $v$  are in  $F$;
there is only one $h$-flipclass $F$ of paths from $u$ to $v$;
\item
\label{222}
 $S_F$ and $TS_F$ are both isomorphic to the Hasse diagram of the Bruhat interval $[u,v]$;
\item 
\label{333} all paths of $S_F$ and $TS_F$ are effective;
\item 
\label{444}
c(F)=1;
\item 
\label{555}the $t$-vector $t_F= (t_0, \ldots, t_h)$ satisfies $\sum (-1)^i t_i =0$.
\end{enumerate}
\end{lem}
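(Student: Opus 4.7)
The plan is to reduce all five statements to classical properties of the Bruhat interval $[u,v]$. The key initial remark is that every edge of $B(\mathfrak S_n)$ increases length by an odd integer $\geq 1$, so the $h$ edges of a path of length $h$ whose endpoints satisfy $\ell(v)-\ell(u)=h$ must each be a cover relation. Consequently $P_h(u,v)$ is exactly the set of maximal chains of the graded poset $[u,v]$, and by Lemma~\ref{sangiovanni} the $i$-th flip operator acts as the elementary diamond move at position $i$ on such chains.

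For (\ref{111}), I would invoke the standard fact that any two maximal chains in a Bruhat interval are connected by a sequence of elementary diamond moves; this can be established by induction on the first position at which two given chains disagree, using Lemma~\ref{sangiovanni} and the extendability of partial chains. Granted (\ref{111}), parts (\ref{222}) and (\ref{333}) follow from the observation that $\ell(x_i)=\ell(u)+i$ for every chain $(u=x_0 \to \cdots \to x_h = v)$: hence the forgetful map $V(TS_F) \to V(S_F)$ is a bijection (cf.\ Remark~\ref{osservazione}), and the elementary facts that every $x \in [u,v]$ and every cover relation in $[u,v]$ lies on some maximal chain yield the identification of $S_F=TS_F$ with the Hasse diagram of $[u,v]$. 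Effectivity in (\ref{333}) amounts to the further classical fact that any chain between two comparable elements of $[u,v]$ extends to a maximal chain of $[u,v]$.

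For (\ref{444}), Proposition~\ref{BiB} already gives $c(F) \geq 1$. For the reverse inequality I would apply Theorem~\ref{Dyertilde}: since every path from $u$ to $v$ in $B(\mathfrak S_n)$ has length at most $h$ with equality precisely for maximal chains, the coefficient of $q^h$ in $\widetilde R_{u,v}(q)$ is exactly $c(F)$. Because $R_{u,v}$ is monic of degree $\ell(u,v)$, the rescaling relation $R_{u,v}(q)=q^{h/2}\widetilde R_{u,v}(q^{1/2}-q^{-1/2})$ forces $\widetilde R_{u,v}$ to be monic of degree $h$, so this coefficient is $1$. Finally, for (\ref{555}), by construction $t_i=\#\{x\in[u,v]:\ell(u,x)=i\}$, whence $\sum_i(-1)^i t_i=\sum_{x\in[u,v]}(-1)^{\ell(u,x)}$; combining Verma's theorem $\mu(u,x)=(-1)^{\ell(u,x)}$ with the defining property $\sum_{x\in[u,v]}\mu(u,x)=0$ for $u<v$ yields the vanishing. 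The only genuinely non-routine step is the transitivity assertion in (\ref{111}), which is the main conceptual hurdle; the rest reduces to well-known properties of Bruhat intervals.
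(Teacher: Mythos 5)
Your proof reduces all five claims to classical facts about Bruhat intervals in essentially the same way the paper does: paths of length $h=\ell(v)-\ell(u)$ are precisely maximal chains, every edge is a cover, and the rest follows. For parts (2), (3), and (5) your reasoning matches the paper (the paper cites Eulerianness of Bruhat intervals for (5), which is exactly the combination of Verma's theorem with $\sum\mu=0$ that you use). For part (4) you take a mildly different route: the paper simply cites Dyer's \cite[Proposition~4.3]{DyeComp}, whereas you argue via Theorem~\ref{Dyertilde} together with the fact that $\widetilde R_{u,v}$ is monic of degree $h$; both work, and your version has the virtue of being self-contained given facts already quoted in the paper.

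The one place where your sketch is too quick is part (1). The paper attributes flip-transitivity on maximal chains to shellability of the order complex \cite{BW}, from which facet-connectivity (and hence, since rank-$2$ intervals are diamonds, diamond-move connectivity) is immediate. Your proposed replacement --- ``induction on the first position at which two chains disagree, using Lemma~\ref{sangiovanni} and extendability of partial chains'' --- is not a valid induction scheme as stated: two atoms $x_1\neq x_1'$ of $[u,v]$ need not share a common cover inside $[u,v]$ (e.g.\ non-adjacent atoms of a $k$-crown for $k\ge4$), so one cannot directly reconcile the chains one position at a time; one must pass through a sequence of intermediate atoms, which is exactly the nontrivial connectivity that shellability provides. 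You do flag this as the one non-routine step and invoke the ``standard fact,'' which is the correct thing to lean on, but the specific induction you describe would not go through without a genuinely stronger intermediate lemma.
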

\begin{proof}
A path  from $u$ to $v$ of length $h$ corresponds to a maximal chain in the Bruhat interval $[u,v]$. The fact that the order complex of a Bruhat interval is shellable (see \cite{BW}) implies that every two paths from $u$ to $v$ of length $h$  are flip-equivalent, hence (\ref{111}) holds.  Since $\ell(v) - \ell(u)=h$, every edge of $S_F$ (and of $TS_F$) corresponds to a cover relation of Bruhat order since an edge of the form $x \longrightarrow y$, with $\ell(y) - \ell(x)>1$, cannot be part of a maximal chain. Vice versa, every cover relation belongs to a maximal chain. Hence (\ref{222}) holds. Property (\ref{333}) follows by  (\ref{111})  and(\ref{222}). Property   (\ref{444}) is implied by \cite[Proposition~4.3]{DyeComp}. Property  (\ref{555}) follows by the fact that Bruhat intervals are Eulerian (see \cite[Corollary~2.7.11]{BB}).
\end{proof}

\section{From infinite to finite}

Fix $h$ in $\mathbb N$. 
%For all $n \in \mathbb N^+$ and all $u,v \in \mathfrak S_n$ with $u\leq v$, consider the $h$-flipclasses in the interval $[u,v]$. 
Consider the $h$-flipclasses of any $\mathfrak S_n$
(letting $n$ vary).
These are clearly infinite in number. The aim of this section is to  show that, on the other hand, the number of isomorphism classes of (labelled) $h$-flipclasses is finite and that all such isomorphism classes can be constructed from flipclasses of $\mathfrak S_{h+1}$.

Let $n \in \mathbb N^+$ and $u,v \in \mathfrak S_n$, with $u < v$. Let $\Gamma= \big(u=x_0 \stackrel{t_1}{\longrightarrow} x_1 \stackrel{t_2}{\longrightarrow}  \cdots \stackrel{t_{h-1}}{\longrightarrow} x_{h-1}\stackrel{t_h}{\longrightarrow} x_h=v\big)$  be a path from $u$ to $v$ in $B(\mathfrak S_n)$ of length $h$. We denote by $W(\Gamma)$ the reflection subgroup of $ \mathfrak S_n$ generated by the transpositions $t_1, \ldots, t_h$. Moreover, we denote by $G(\Gamma)$ the edge-labeled undirected graph such that:
\begin{enumerate}
\item $\{a : \text{there exists $i$ in $[h]$  such that $t_i(a)\neq a$} \}$ is its vertex set;
\item there is an edge between $a$ and $b$ with label $i$ if $t_i = (a,b)$.
\end{enumerate}
Notice that $G(\Gamma)$ may have multiple edges.

\begin{lem}
Let $\Gamma$ and $\Gamma '$ be two paths  in $B(\mathfrak S_n)$  in the same flipclass. Then:  
\begin{itemize}
\item the groups $W(\Gamma)$ and  $W(\Gamma')$ coincide;
\item the vertex sets of $G(\Gamma)$ and  $G(\Gamma')$ coincide;
\item the number of connected components of $G(\Gamma)$ and $G(\Gamma')$ coincide.
\end{itemize}
\end{lem}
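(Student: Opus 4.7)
The plan is to reduce to the case where $\Gamma'$ is obtained from $\Gamma$ by applying a single flip operator $f_i$. Indeed, the flip group $\mathfrak F_{h,u,v}$ is generated by $f_1, \ldots, f_{h-1}$, so an easy induction on the word length of a flip-group element carrying $\Gamma$ to $\Gamma'$ reduces all three assertions to the single-flip case. Writing $\Gamma = \big( \cdots \stackrel{t_i}{\longrightarrow} x_i \stackrel{t_{i+1}}{\longrightarrow} \cdots \big)$ and $f_i(\Gamma) = \big( \cdots \stackrel{s}{\longrightarrow} y \stackrel{r}{\longrightarrow} \cdots \big)$, the transpositions $t_j$ for $j \notin \{i, i+1\}$ are unchanged, so everything reduces to comparing the pair $\{t_i, t_{i+1}\}$ with $\{s, r\}$.

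The main tool will be Proposition \ref{regoladiflip}, which enumerates the possible shapes of $\{s, r\}$ as a function of $\{t_i, t_{i+1}\}$. First, if $t_i$ and $t_{i+1}$ commute, the proposition gives $\{s, r\} = \{t_{i+1}, t_i\}$, so the multiset of labelled edges is literally unchanged and all three claims are immediate. Otherwise $\{t_i, t_{i+1}\} \subseteq \{(a,b), (a,c), (b,c)\}$ for a unique three-element set $\{a,b,c\}$, and inspection of the remaining cases of Proposition \ref{regoladiflip} shows that likewise $\{s, r\} \subseteq \{(a,b), (a,c), (b,c)\}$, the two pairs being two-element subsets of this common triangle of transpositions. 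Hence both pairs involve the same letters of $[n]$, which yields the coincidence of the vertex sets of $G(\Gamma)$ and $G(f_i(\Gamma))$; and since any two distinct elements of $\{(a,b), (a,c), (b,c)\}$ already generate the whole rank-two reflection subgroup $\mathfrak S_{\{a,b,c\}}$, one obtains $\langle t_i, t_{i+1} \rangle = \langle s, r \rangle$, and therefore $W(\Gamma) = W(f_i(\Gamma))$.

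Finally, for the number of connected components, note that the edges of $G(f_i(\Gamma))$ with labels $j \neq i, i+1$ coincide with those of $G(\Gamma)$, so only the two edges labelled $i$ and $i+1$ can affect the connectivity. In the commuting case the multiset of these two edges is preserved; in the non-commuting case, both $\{t_i, t_{i+1}\}$ and $\{s, r\}$ are pairs of distinct edges of the triangle on $\{a, b, c\}$, and any two distinct edges of a triangle already form a connected subgraph with the full vertex set $\{a, b, c\}$. Since the common vertex set of $G(\Gamma)$ and $G(f_i(\Gamma))$ is partitioned in the same way by the combined effect of the unchanged edges and of the pair of replaced edges, the number of connected components must coincide. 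The only non-routine point in the whole argument is the case distinction in Proposition \ref{regoladiflip}, but once that proposition is granted the verification reduces to a finite check on a handful of cases.
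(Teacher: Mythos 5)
Your proof is correct and follows the same approach as the paper: reduce by transitivity of the flip-group action to the case of a single flip $f_i$, then invoke Proposition~\ref{regoladiflip} to compare $\{t_i, t_{i+1}\}$ with $\{s, r\}$. The paper's own proof is two sentences and leaves the case analysis implicit as "the assertions readily follow"; you have simply spelled out that verification — the commuting case, and in the non-commuting case the observation that both pairs are two edges of the same triangle on $\{a,b,c\}$, which immediately yields the coincidence of vertex sets, of generated reflection subgroups, and of the induced partition into connected components.
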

\begin{proof}
By transitivity, it is enough to prove the statements when $\Gamma'$ is a flip of $\Gamma$, i.e.,  $\Gamma' = f_i(\Gamma)$ for some $i$  in $[h-1]$. In this case, the assertions readily follow by  Proposition~\ref{regoladiflip}.
\end{proof}
\begin{defn}
Let $F$ be a flipclass.
\begin{itemize}
\item We denote by $W(F)$ the reflection subgroup generated by the  transpositions labeling the edges of $\Gamma$, for any $\Gamma \in F$.
\item We denote by $E(F)$ the vertex set of $G(\Gamma)$, for any $\Gamma$ in $F$.  
\item We say that a flipclass $F$ is reducible if $W(F)$ is a nontrivial direct product, i.e., $G(\Gamma)$ is not connected,  for any $\Gamma \in F$. Otherwise, we say that $F$ is irreducible.
\end{itemize}
\end{defn}
\begin{rmk}
\begin{enumerate}
\item The group $W(F)$ is a direct product of symmetric groups.
\item The partition of $E(F)$ given by the connected components of $G(\Gamma)$ does not depend  on the choice of $\Gamma$ in $F$. 
\end{enumerate}
\end{rmk}

Recall that, given a subset $X$ of $\mathfrak S_n$, we denote by $B(X)$ the (directed) graph induced on  $X$ by the Bruhat graph  $B(\mathfrak S_n)$.
\begin{lem}
\label{FeF'}
Let $W$ be a reflection subgroup of $\mathfrak S_n$. Let $F$ be a flipclass of $\mathfrak S_n$ for which $W(F) = W$. Then\begin{itemize}
\item each path in $F$ is contained in a left coset of $W$,
\item there is a flips and edge labeling preserving bijection between $F$ and a flipclass $F'$ of paths in $B(W)$.
\end{itemize}
\end{lem}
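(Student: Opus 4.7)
For the first bullet, the plan is immediate: writing $\Gamma=\big(u=x_0\xrightarrow{t_1}x_1\xrightarrow{t_2}\cdots\xrightarrow{t_h}x_h=v\big)\in F$ and using $W(F)=W$, each label $t_j$ lies in $W$, so $x_i=t_i\cdots t_1u\in Wu$ for every $i$. Since every path of $F$ starts at $u$, all paths of $F$ lie in the single left coset $Wu$.

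For the second bullet, I would build the bijection from an underlying isomorphism of edge-labelled directed graphs. The key ingredient is Dyer's theory of reflection subgroups, which supplies (i) a unique minimum-length element $u_0\in Wu$ satisfying $\ell(wu_0)=\ell_W(w)+\ell(u_0)$ for all $w\in W$, where $\ell_W$ denotes the length of $W$ as a Coxeter group with its canonical simple reflections, and (ii) the coincidence of the Bruhat order on $W$ (with $\ell_W$) with its restriction from $\mathfrak S_n$. I would then define $\phi:Wu\to W$ by $\phi(x)=xu_0^{-1}$, extended vertex-wise to paths. For an edge $a\xrightarrow{t}b$ with $a,b\in Wu$, one has $\phi(b)\phi(a)^{-1}=ba^{-1}=t$, so edge labels are preserved. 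For the direction, (i) yields $\ell(a)<\ell(b)\Leftrightarrow \ell_W(\phi(a))<\ell_W(\phi(b))$, and (ii) then gives $\ell(a)<\ell(b)\Leftrightarrow \ell(\phi(a))<\ell(\phi(b))$. Thus $\phi$ is an isomorphism of edge-labelled directed graphs between $B(\mathfrak S_n)|_{Wu}$ and $B(W)$, carrying $F$ onto a set $F'$ of paths in $B(W)$ and preserving edge labels.

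To upgrade this to a flips-preserving bijection, I would use the local nature of flips. Given $\Gamma\in F$ and $i\in[h-1]$, both $\Gamma$ and $f_i(\Gamma)$ lie in $F$, so by the first bullet the two candidate middle vertices $x_i$ and $x_i'$ at position $i$ both belong to $Wu$; since $|P_2(x_{i-1},x_{i+1})|\in\{0,2\}$, they are exactly the middle vertices of the two length-$2$ paths from $x_{i-1}$ to $x_{i+1}$ in $B(\mathfrak S_n)$. Via $\phi$ they correspond to the two length-$2$ paths from $\phi(x_{i-1})$ to $\phi(x_{i+1})$ in $B(W)$, giving $\phi(f_i(\Gamma))=f_i(\phi(\Gamma))$. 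Consequently $F'=\phi(F)$ is closed and flip-connected, hence a single flipclass of $B(W)$.

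The main obstacle will be the verification that $\phi$ respects edge directions: right multiplication by $u_0^{-1}$ does not preserve the length function of $\mathfrak S_n$ in general, so this step is not formal and relies crucially on the two ingredients from Dyer's theory cited above; everything else (label preservation, the local-to-global promotion of the graph isomorphism to a flip-equivariant bijection, the flipclass property of the image) is straightforward bookkeeping.
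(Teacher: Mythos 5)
Your strategy matches the paper's: both rest on Dyer's theory of reflection subgroups, specifically on \cite[Theorem~1.4]{DyeComp1}, which supplies an edge labeling preserving isomorphism between the induced subgraph $B(C)$ (where $C=Wu$ is the coset containing the paths of $F$) and $B(W)$; and both observe that such a graph isomorphism automatically preserves flips, since a flip is a purely local graph-theoretic operation. Your first bullet and your flip-preservation paragraph are correct.

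The step that breaks as written is ingredient~(i). Length additivity $\ell(wu_0)=\ell_W(w)+\ell(u_0)$ with respect to the minimum-length coset representative $u_0$ holds for \emph{standard parabolic} subgroups but fails for general reflection subgroups. Already in $\mathfrak S_3$ take $W=\langle (1,3)\rangle$: this rank-one reflection subgroup has canonical simple generator $(1,3)$, so $\ell_W\big((1,3)\big)=1$; for the coset $We=W$ the minimal element is $u_0=e$, yet $\ell\big((1,3)\cdot e\big)=3\neq 1+0$. What is actually true — and is the content of the cited theorem — is the weaker but sufficient assertion that right multiplication by $u_0$ is an isomorphism of edge-labelled \emph{directed} graphs from the Bruhat graph of $\big(W,\chi(W)\big)$ onto $B(C)$ (your ingredient~(ii) is the $u_0=e$ special case). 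This gives exactly the edge-direction coincidence you need without any claim of length additivity. Replacing~(i) by this correct statement makes your proof complete and identical in substance to the paper's one-line appeal to Dyer.
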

\begin{proof}
The first assertion is straightforward. Let $C$ denote this left coset.

By \cite[Theorem 1.4]{DyeComp1}, there is an edge labeling preserving isomorphism  between the graph $B(C)$ and the graph $B(W)$. This isomorphism clearly preserves the flips. Hence, the second assertion follows.
\end{proof}

The cartesian product of two edge-labelled directed graphs $G_1$ and $G_2$ is the edge-labelled directed graph $G_1 \times G_2$ satisfying $V(G_1 \times G_2)= V(G_1) \times V(G_2)$ and $E(G_1 \times G_2) = \{ (x_1, x_2) \stackrel{t}{\longrightarrow} (y_1,y_2) : \text{ $x_1 = y_1$ and $x_2\stackrel{t}{\longrightarrow}  y_2 \in E(G_2)$, or  $x_1 \stackrel{t}{\longrightarrow}  y_1 \in E(G_1)$ and $x_2 = y_2$}\}$. Given a set of paths $F_i$ in $G_i$, for $i\in \{1,2\}$, we denote the set of paths in $G_1 \times G_2$ obtained by all possible shuffles of paths of $F_1$ and $F_2$ by $F_1 \ast F_2$.

\begin{lem}
\label{prodotto diretto 1}
Let $W$ be a reflection subgroup of $\mathfrak S_n$. Suppose $W = W_1 \times W_2$ (direct product). Then the following hold.
\begin{enumerate}
\item
\label{primo}
 $B(W) = B(W_1) \times B(W_2)$.
\item
\label{secondo}
 The set of paths of $B(W)$ is the set of shuffles of the sets of paths of $B(W_1)$ and $B(W_2)$.
\item 
\label{terzo}
Let $\Gamma$ be a shuffle of $\Gamma_1$ and $\Gamma_2$: 
\begin{enumerate}
\item 
\label{a} $G(\Gamma)$ (as unlabelled graph) is the disjoint union of $G(\Gamma_1)$ and $G(\Gamma_2)$ (as unlabelled graphs) and the labels are determined by the shuffle;
\item
\label{b}  any flip of $\Gamma$  corresponds to either a flip of $\Gamma_1$, or a flip of $\Gamma_2$, or a change of a  position in the shuffle.
\end{enumerate}
\end{enumerate}
\end{lem}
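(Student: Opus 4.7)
The plan is to exploit one simple structural fact and then unwind the definitions. The key observation is that if $W$ is a reflection subgroup of $\mathfrak S_n$ and $W = W_1 \times W_2$ is a direct product decomposition, then the supports $E(W_1), E(W_2) \subseteq [n]$ must be disjoint. Indeed, any reflection subgroup of $\mathfrak S_n$ has the form $\mathfrak S_{A_1} \times \cdots \times \mathfrak S_{A_k}$ for some pairwise disjoint $A_1, \ldots, A_k \subseteq [n]$, and the commutativity of elements of $W_1$ with elements of $W_2$ forces their moved points to be disjoint. In particular, the set of reflections of $W$ is the disjoint union of the reflections of $W_1$ and those of $W_2$, the length function satisfies $\ell(w_1 w_2) = \ell(w_1) + \ell(w_2)$, and every $w \in W$ factors uniquely as $w = w_1 w_2$ with $w_i \in W_i$.

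For (\ref{primo}), I would identify $w \in W$ with the pair $(w_1, w_2)$. An edge $u \stackrel{t}{\longrightarrow} v$ in $B(W)$ satisfies $vu^{-1} = t$ and $\ell(u) < \ell(v)$. Since $t$ belongs to exactly one of the two factors, exactly one coordinate changes, and by additivity of length that coordinate increases in $B(W_i)$. Conversely, any such single-coordinate move is an edge of $B(W)$. This matches the defining edges of $B(W_1) \times B(W_2)$. Part (\ref{secondo}) is then the elementary observation that a path in a cartesian product graph is precisely a shuffle of its two coordinate projections, each of which is a path in the factor.

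For (\ref{terzo})(\ref{a}), consider a shuffle $\Gamma$ of $\Gamma_1 \in B(W_1)$ and $\Gamma_2 \in B(W_2)$. The transpositions labeling $\Gamma$ split according to which factor each edge comes from, the transpositions from $\Gamma_1$ move only elements of $E(W_1)$, and those from $\Gamma_2$ only elements of $E(W_2)$. By disjointness of supports, the vertex set of $G(\Gamma)$ splits as the disjoint union $E(W_1) \sqcup E(W_2)$, with no edges crossing between the two parts; thus $G(\Gamma)$ is the disjoint union of $G(\Gamma_1)$ and $G(\Gamma_2)$, the edge labels coming from the positions dictated by the shuffle.

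For (\ref{terzo})(\ref{b}), a flip of $\Gamma$ at position $i$ acts on two consecutive edges labeled $t_i, t_{i+1}$. If both labels lie in $W_1$, the two edges belong to the $\Gamma_1$-subsequence of $\Gamma$ (similarly for $W_2$), and the flip corresponds to the evident flip of $\Gamma_1$ (resp. $\Gamma_2$). If $t_i$ and $t_{i+1}$ lie in different factors, then they commute, and Proposition~\ref{regoladiflip}(1) yields that the flipped path has the same two labels in the opposite order; this is exactly the operation of swapping which of the two positions is assigned to $\Gamma_1$ versus $\Gamma_2$ in the shuffle. The only nontrivial step is the opening disjoint-support remark; everything after that is bookkeeping.
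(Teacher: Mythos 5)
The paper leaves this lemma without a proof (``easy and left to the reader''), so there is no in-paper argument to compare against. Your overall approach --- establish that $E(W_1)$ and $E(W_2)$ are disjoint, then unwind the definitions --- is the natural one and the conclusions you draw are all correct.

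There is, however, one step that is stated incorrectly and should be flagged. You assert that the $\mathfrak S_n$-length function satisfies $\ell(w_1 w_2)=\ell(w_1)+\ell(w_2)$ for $w_i\in W_i$, and you use this to conclude that an edge of $B(W)$ projects to an edge in one of the factors. That additivity is \emph{false} for the ambient length. For instance, take $W_1=\langle(1,3)\rangle$ and $W_2=\langle(2,4)\rangle$ in $\mathfrak S_4$ (disjoint supports, so $W=W_1\times W_2$ is a reflection subgroup): then $\ell((1,3))=3$, $\ell((2,4))=3$, but $\ell((1,3)(2,4))=\ell(3412)=4\neq 6$. Additivity does hold for the \emph{intrinsic} length of $W$ with its canonical generators, and it also holds for the number of $W$-inversions, but not for $\ell_{\mathfrak S_n}$, which is what the definition of $B(W)$ as an induced subgraph of $B(\mathfrak S_n)$ uses.

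The conclusion you need is nonetheless true and easy to establish directly: if $t=(a,b)\in W_1$ with $a<b$ and $u=u_1u_2$ with $u_i\in W_i$, then since $u_2$ fixes $E(W_1)$ pointwise and $u_1$ permutes $E(W_1)$, one has $u^{-1}(a)=u_1^{-1}(a)$ and $u^{-1}(b)=u_1^{-1}(b)$; hence $\ell(tu)>\ell(u)$ iff $u^{-1}(a)<u^{-1}(b)$ iff $u_1^{-1}(a)<u_1^{-1}(b)$ iff $\ell(tu_1)>\ell(u_1)$. This gives part~(\ref{primo}) without invoking length additivity. Alternatively, one can cite Dyer's reflection-subgroup theory (already used in the proof of Lemma~\ref{FeF'}) to replace the ambient length by the intrinsic one, for which additivity in a direct product is immediate. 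The remainder of your argument --- the shuffle description in~(\ref{secondo}), the disjoint-union decomposition of $G(\Gamma)$ in~(\ref{terzo})(\ref{a}), and the case analysis of flips in~(\ref{terzo})(\ref{b}) via Proposition~\ref{regoladiflip}(1) --- is correct as written.
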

\begin{proof}
The proof is easy and left to the reader.
\end{proof}
Given a path $\Gamma$ in $B(W_1) \times B(W_2)$,  denote by $\Gamma_i$ the projection of $\Gamma$ on $B(W_i)$, for $i\in \{1,2\}$.  
\begin{lem}
\label{prodotto diretto 2}
Let $W$ be a reflection subgroup of $\mathfrak S_n$. Suppose $W = W_1 \times W_2$ (direct product). Let $u=(u_1,u_2), v=(v_1,v_2) \in W$, and  $F$ be a flipclass of paths from $u$ to $v$. Let $F_i = \{\Gamma_i : \Gamma \in F\}$, for $i\in \{1,2\}$.
Then the following hold.
\begin{enumerate}
\item
\label{primo'}
 $F_i$ is a flipclass in $B(W_i)$,  for $i\in \{1,2\}$;
\item
\label{secondo'}
 $F = F_1 \ast F_2$; 
\item 
\label{terzo'}
$S_F=S_{F_1} \times S_{F_2}$;
\item
\label{quarto'}
$TS_F=TS_{F_1} \times TS_{F_2}$;
\item
\label{quinto'}
$\iota_F = \iota_{F_1} \cdot \iota_{F_2}$;
\item
\label{sesto'}
$c(F)= c(F_1) \cdot c(F_2)$ (and analogously for the increasing paths of  of $S_F$ and  $TS_F$ w.r.t. any reflection ordering).
%\item
%\label{quarto}
%the t-vector $t_F = (t_0, \ldots, t_h)$ of $F$ is the convolution product of the t-vector $t_{F_1} = (r_0, \ldots, r_h)$ of $F_1$ and the t-vector $t_{F_2} = (s_0, \ldots, s_h)$ of $F_2$: 
%$$t_F= \Big(\sum_{i,j : i+j =0} r_i + s_j, \ldots ,  \sum_{i,j : i+j =h} r_i + s_j \Big).$$
\end{enumerate}
\end{lem}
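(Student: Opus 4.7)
The plan is to systematically exploit Lemma~\ref{prodotto diretto 1}, which gives $B(W) = B(W_1) \times B(W_2)$ and classifies each flip in a shuffle as a flip of the first projection, a flip of the second projection, or a change of position in the shuffle. The key move underlying items (\ref{primo'}) and (\ref{secondo'}) is what I will call \emph{consecutivization}: given $\Gamma \in F$ and a desired flip $f_i$ of its first projection $\Gamma_1$, the three $W_1$-moves of $\Gamma_1$ at positions $i-1, i, i+1$ need not be consecutive in $\Gamma$, but one can perform position-change flips (which are flips of $F$) until they are and then apply the corresponding $W_1$-flip inside $F$; the result has first projection $f_i(\Gamma_1)$. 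This yields (\ref{primo'}): $F_1$ is closed under every flip of $B(W_1)$, and connectedness of $F_1$ under the flip group follows by projecting any flip sequence inside $F$ to the first factor, where each step becomes either a flip of $F_1$ or the identity. An identical argument handles $F_2$. For (\ref{secondo'}), the inclusion $F \subseteq F_1 \ast F_2$ is immediate from the definition of the projections, and the reverse is obtained in three passes starting from any fixed $\Gamma^{0} \in F$: transform $\Gamma^0_1$ into the prescribed $\Gamma_1$ via lifted $W_1$-flips, then $\Gamma^0_2$ into $\Gamma_2$ analogously, and finally rearrange the interleaving pattern into the prescribed shuffle by position-change flips.

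Items (\ref{terzo'}) and (\ref{quarto'}) then follow from $F = F_1 \ast F_2$ combined with the product structure of $B(W)$. Since every edge of $B(W)$ moves exactly one coordinate, a pair $(a_1,a_2)$ lies in $V(S_F)$ iff $a_1 \in V(S_{F_1})$ and $a_2 \in V(S_{F_2})$ (the nontrivial direction being witnessed by shuffling a path of $F_1$ through $a_1$ with a path of $F_2$ through $a_2$, which belongs to $F$ by (\ref{secondo'})), and the edges match accordingly. For the time-support graph, the identification is realized by the map $((a_1,j),(a_2,k)) \mapsto ((a_1,a_2), j+k)$; surjectivity is immediate from the shuffle construction, while injectivity relies on the rigidity that within a flipclass in $\mathfrak S_n$ each vertex in the support is visited at a unique time, a property checked by tracking the effect of flips on vertex positions: a single flip never displaces a vertex which remains in the path, and a vertex removed by a flip can only reenter at the position dictated by the inverse flip. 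Edge compatibility is then automatic because every edge of $TS_F$ moves exactly one coordinate and inherits its label from the corresponding edge of $TS_{F_1}$ or $TS_{F_2}$. Item (\ref{quinto'}) is a direct computation from (\ref{quarto'}): in the Cartesian product the indegree and outdegree of $((a_1,j),(a_2,k))$ are additive across factors and the time is $j+k$, so each monomial in $\iota_F$ factors as a product of monomials from $\iota_{F_1}$ and $\iota_{F_2}$, and the double summation separates.

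For (\ref{sesto'}), I would choose reflection orderings $\preceq_i$ on $T_i$ and concatenate them into a total order $\preceq$ on $T = T_1 \sqcup T_2$ with all of $T_1$ preceding $T_2$; this is a reflection ordering since condition~(\ref{ordineriflessione}) is vacuous unless the three transpositions $(a,b),(a,c),(b,c)$ all lie in $T$, in which case they lie in a single factor $T_i$ (the supports of $T_1$ and $T_2$ being disjoint) where $\preceq$ restricts to $\preceq_i$. An increasing path of $F$ with respect to $\preceq$ must perform all $W_1$-labels before any $W_2$-label, so is the literal concatenation $\Gamma_1 \cdot \Gamma_2$ of an increasing $\Gamma_1 \in F_1$ with an increasing $\Gamma_2 \in F_2$, and conversely every such concatenation belongs to $F$ by (\ref{secondo'}). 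The count is therefore $c(F_1) \cdot c(F_2)$, and Theorem~\ref{prec=prec} identifies this with $c(F)$ independently of the chosen reflection ordering; the analogous statements for increasing paths in $S_F$ and $TS_F$ are obtained the same way via (\ref{terzo'}) and (\ref{quarto'}). The main obstacle I expect is the bookkeeping in items (\ref{primo'})--(\ref{secondo'}) arising from the preparatory position-change flips, together with the time-rigidity verification needed for injectivity in (\ref{quarto'}); once these combinatorial points are in hand, the remaining assertions reduce to definitional manipulations.
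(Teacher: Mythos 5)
Your arguments for items~(\ref{primo'}), (\ref{secondo'}), (\ref{terzo'}), and (\ref{quinto'}) are sound and spell out details the paper compresses into a citation of Lemma~\ref{prodotto diretto 1}. The approach to (\ref{sesto'}) via a reflection ordering placing all of $T_1$ before $T_2$ is a clean alternative to the paper's derivation from (\ref{secondo'})--(\ref{quarto'}); it does, however, treat $T_1\sqcup T_2$ as though it were the entire set of transpositions of $\mathfrak S_n$, whereas one must still check that such an order really arises as the restriction of a reflection ordering of $\mathfrak S_n$ (it does, e.g.\ by taking a reduced word for $w_0$ that starts with the longest element of a suitable standard parabolic, but this needs saying).

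The serious gap is in (\ref{quarto'}). To get injectivity of the map $\big((a_1,j),(a_2,k)\big)\mapsto\big((a_1,a_2),j+k\big)$ you invoke a ``time-rigidity'' property: that within a flipclass each vertex of the support is visited at a unique time. This is \emph{false}, and the paper itself produces a counterexample in the remark following Proposition~\ref{h<4}: there is a $5$-flipclass whose paths visit the vertex $232$ at time~$1$ in one path and at time~$3$ in another. Your heuristic justification (``a vertex removed by a flip can only reenter at the position dictated by the inverse flip'') fails for exactly this reason: after the removal, further flips at other indices can reintroduce the vertex at a different position. Taking $F_1$ and $F_2$ to be two disjoint copies of that $5$-flipclass even produces an outright failure of injectivity, since $\big((232,1),(232',3)\big)$ and $\big((232,3),(232',1)\big)$ would have the same image $\big((232,232'),4\big)$; so the identity $TS_F = TS_{F_1}\times TS_{F_2}$ in fact requires at least one of the forgetful maps $TS_{F_i}\to S_{F_i}$ to be injective (as it is in every application in the paper, where one factor has length at most~$4$), and cannot be established by the rigidity argument you propose.
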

\begin{proof}
(\ref{primo'}) follows by Lemma~\ref{prodotto diretto 1}, (\ref{b}).

(\ref{secondo'}) follows by Lemma~\ref{prodotto diretto 1}, (\ref{secondo}) and Lemma~\ref{prodotto diretto 1},(\ref{b}).

(\ref{terzo'}) and (\ref{quarto'}) follows by (\ref{secondo'}) and the definitions of support graph and time-support graph.

(\ref{quinto'}) follows by (\ref{quarto'}).

(\ref{sesto'}) follows by (\ref{secondo'}), (\ref{terzo'}), and (\ref{quarto'}).
\end{proof}

\begin{defn}
Let $E$ be a subset of $[n]$ of cardinality $m$. Denote by $r_E$ the unique order preserving bijection from $E$ to $[m]$, as well as the following maps that it induces.
\begin{enumerate}
\item Given $w \in \mathfrak S_n$, we denote by $r_E(w)$ the permutation in $\mathfrak S_m$ whose one-line notation is obtained from the one-line notation of $w$ by deleting the numbers not in $E$ and applying $r_E$ to the numbers in $E$.
\item  Let $\Gamma$ be a path in $B(\mathfrak S_n)$ with labels of the type $(a,b)$ with $a,b \in E$. We denote by $r_E(\Gamma)$ the path in $B(\mathfrak S_m)$ obtained from $\Gamma$ by applying $r_E$ to its nodes and  labels.
\item Given a total order on the set of transpositions in $\mathfrak S_n$, we denote by $\preceq_{r_E}$ the total order on the set of transpositions in $\mathfrak S_m$ defined by  $(a,b) \preceq_{r_E} (c,d)$ if and only if $\big( r_{E}^{-1}(a),r_{E}^{-1}(b)\big) \preceq \big( r_{E}^{-1}(c),r_{E}^{-1}(d)\big) $. 
\end{enumerate}
\end{defn}

\begin{lem}
\label{induceordine}
Let  $\preceq$ be a reflection ordering  on $\mathfrak S_n$ and $E$ be a subset of $ [n]$ of cardinality $m$.  The total order $\preceq_{r_E}$  on the set of the reflections of $\mathfrak S_m$  is a reflection ordering. Moreover, if $\preceq$ is the lexicographical order for $\mathfrak S_n$, then  $\preceq_{r_E}$ is the lexicographical order for $\mathfrak S_m$.
\end{lem}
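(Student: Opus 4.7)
The plan is to verify the defining condition of a reflection ordering directly, by pulling back a triple in $[m]$ to a triple in $[n]$ via $r_E^{-1}$ and invoking the hypothesis that $\preceq$ is a reflection ordering on $\mathfrak{S}_n$. The key point to exploit is that $r_E : E \to [m]$ is by definition the unique order-preserving bijection, so $r_E^{-1}$ is order-preserving as well.

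Concretely, I would fix $a,b,c \in [m]$ with $a<b<c$ and set $a' = r_E^{-1}(a)$, $b' = r_E^{-1}(b)$, $c' = r_E^{-1}(c)$. Since $r_E^{-1}$ is order-preserving, we have $a'<b'<c'$ in $[n]$, so the hypothesis that $\preceq$ is a reflection ordering on $\mathfrak{S}_n$ gives one of the two chains
\[
(a',b') \preceq (a',c') \preceq (b',c') \qquad \text{or} \qquad (b',c') \preceq (a',c') \preceq (a',b').
\]
By the very definition of $\preceq_{r_E}$, each of these inequalities transports to the corresponding inequality between $(a,b)$, $(a,c)$, $(b,c)$ under $\preceq_{r_E}$, which is exactly condition~(\ref{ordineriflessione}) for $\preceq_{r_E}$ on $\mathfrak{S}_m$.

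For the second part, I would again unwind the definitions. Suppose $\preceq$ is the lexicographic order on $\mathfrak{S}_n$, and let $(a,b), (c,d)$ be transpositions of $\mathfrak{S}_m$ with $a<b$ and $c<d$. Then
\[
(a,b) \preceq_{r_E} (c,d) \;\Longleftrightarrow\; \bigl(r_E^{-1}(a), r_E^{-1}(b)\bigr) \preceq \bigl(r_E^{-1}(c), r_E^{-1}(d)\bigr),
\]
which, by the lexicographic nature of $\preceq$, is equivalent to $r_E^{-1}(a)<r_E^{-1}(c)$, or $r_E^{-1}(a)=r_E^{-1}(c)$ and $r_E^{-1}(b)<r_E^{-1}(d)$. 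Since $r_E^{-1}$ is an order-preserving bijection, this is in turn equivalent to $a<c$, or $a=c$ and $b<d$, i.e., $(a,b)$ precedes $(c,d)$ in the lexicographic order on $\mathfrak{S}_m$.

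The proof is essentially a translation of definitions using the order-preserving property of $r_E$; there is no genuine obstacle, only the bookkeeping of indices. The only thing worth being careful about is that the transpositions $(a,b), (c,d)$ are written with $a<b$ and $c<d$ (as is standard), and that $r_E^{-1}$ preserves this convention because it is order-preserving; otherwise one might mistakenly apply $\preceq$ to a pair not in the canonical form in which reflection orderings are defined.
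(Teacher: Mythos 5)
Your proof is correct and takes essentially the same route as the paper's, which simply observes that since $r_E$ is order preserving, condition~(\ref{ordineriflessione}) for $\preceq_{r_E}$ follows from the same condition for $\preceq$, and calls the second statement straightforward. You have merely spelled out the bookkeeping that the paper leaves implicit.
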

\begin{proof}
Since $r_E$ is order preserving, condition (\ref{ordineriflessione}) of the definition of a reflection order is satisfied by  $\preceq_{r_E}$ if  it is satisfied by  $\preceq$. The second statement is straightforward.
\end{proof}

\begin{lem}
\label{riduzione a E}
Let  $F$ be a flipclass.  Denote $E(F)$ simply by $E$. Let $|E|=m$. The following hold.
\begin{enumerate}
\item
\label{I}
The set $F'= \{r_E(\Gamma) : \Gamma \in F\}$ is a flipclass of $\mathfrak S_m$. The map $r_E$  is a  flips preserving bijection from  $F$  to  $F'$ satisfying  $r_E(l_i(\Gamma)) = l_i (r_E(\Gamma))$ for all $\Gamma \in F$ (where $l_i(\Delta)$ denote the $i$-th label of the path $\Delta$). 
\item 
\label{II}
The map $r_E$ induces an isomorphism of edge-labelled directed graphs  from $S_{F}$ to $S_{F'}$ and an isomorphism of edge-labelled directed graphs  from $TS_{F}$ to $TS_{F'}$. The labels also are matched by $r_E$.
\item
\label{III}
Fix any reflection ordering  $\preceq$ for $\mathfrak S_n$. The increasing paths of $F$,  $S_{F}$, and $TS_{F}$ (w.r.t. $\preceq $) correspond to the increasing paths of $F'$,  $S_{F'}$, and $TS_{F'}$ (w.r.t. $\preceq_{r_E}$), respectively.
\item
\label{IV} The map $r_E$ induces an isomorphism from $F$ to $F'$.
\end{enumerate} 
\end{lem}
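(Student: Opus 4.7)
The proof will hinge on a single central technical fact: for any $u \in \mathfrak{S}_n$ and any transposition $t = (a,b) \in \mathfrak{S}_E$, the identity $r_E(tu) = r_E(t) \cdot r_E(u)$ holds in $\mathfrak{S}_m$ (where $r_E(t) := (r_E(a), r_E(b))$), and moreover $u \stackrel{t}{\longrightarrow} tu$ is an edge of $B(\mathfrak{S}_n)$ if and only if $r_E(u) \stackrel{r_E(t)}{\longrightarrow} r_E(tu)$ is an edge of $B(\mathfrak{S}_m)$. The identity $r_E(tu) = r_E(t) r_E(u)$ reduces to the observation that left-multiplication by $(a,b)$ swaps the entries $a$ and $b$ in one-line notation, and $r_E$ clearly commutes with such swaps since it is order-preserving. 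For the direction of the edge, I would use the standard criterion that, for $a<b$, one has $u < tu$ in Bruhat order iff $a$ precedes $b$ in the one-line notation of $u$; since $r_E$ preserves the relative order among elements of $E$, this condition transfers verbatim to $r_E(u)$. Alternatively, the central fact is a direct consequence of \cite[Theorem~1.4]{DyeComp1} applied to the left coset $\mathfrak{S}_E \cdot u$, which was already invoked in the proof of Lemma~\ref{FeF'}.

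With the central fact in hand, I would deduce (\ref{I}) as follows. By Lemma~\ref{FeF'} applied with $W = \mathfrak{S}_E$ (a group containing $W(F)$), every vertex of every path in $F$ lies in the single left coset $C = \mathfrak{S}_E \cdot u$, on which $r_E$ is injective (a permutation in $C$ is determined by the sequence of values it takes at the positions where its one-line notation contains an element of $E$). The central fact then promotes $r_E$ to a bijection between $F$ and a set $F' \subseteq P_h(r_E(u), r_E(v))$, satisfying the label-matching property $r_E(l_i(\Gamma)) = l_i(r_E(\Gamma))$. Since the flip rule of Proposition~\ref{regoladiflip} is stated purely in terms of the relative orderings of the transposition entries and of their positions, and $r_E$ preserves all such orderings, $r_E$ commutes with each flip operator; consequently $F'$ is closed under flips and, being the image of a single flip-orbit, is itself a single flip-orbit, i.e.\ an $h$-flipclass of $\mathfrak{S}_m$.

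Assertions (\ref{II}), (\ref{III}), and (\ref{IV}) will then follow by formal deduction. (\ref{II}) is a direct translation of (\ref{I}) via the definitions of $S$ and $TS$, with the injectivity of $r_E$ on $V(S_F) \subseteq C$ ensuring that the induced maps are edge-labelled graph isomorphisms. (\ref{III}) is a consequence of Lemma~\ref{induceordine} and the design of $\preceq_{r_E}$: a label sequence is $\preceq$-increasing iff its image under $r_E$ is $\preceq_{r_E}$-increasing, and this transfers to increasing paths in $S_F$ and $TS_F$ through the isomorphism provided by (\ref{II}). Finally, (\ref{IV}) is a repackaging of (\ref{I})-(\ref{III}) in the language of isomorphisms of flipclasses from Section~\ref{definizioni}; the only remaining clause to verify is the order-preservation of the transposition relabelling with respect to the lexicographical orders, which holds because $r_E \colon E \to [m]$ is itself order-preserving. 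The main obstacle is the central fact, which is elementary but needs careful bookkeeping with one-line notations (or a clean reduction to Dyer's theorem); the remaining steps amount to unwinding definitions, and no further conceptual difficulty is anticipated.
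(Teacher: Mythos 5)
Your proposal is correct and follows essentially the same route as the paper's (very terse) proof: observe that all vertices of paths in $F$ differ only in the values of $E$ (the values in $[n]\setminus E$ sit in fixed positions), so $r_E$ is injective and label-compatible on them; then observe that the flip rule in Proposition~\ref{regoladiflip} depends only on relative orderings of values and positions, both of which $r_E$ preserves, so $r_E$ commutes with each flip operator and (\ref{I}) follows; (\ref{II})--(\ref{IV}) then unwind formally, with Lemma~\ref{induceordine} handling the reflection-ordering part. Your "central fact" ($r_E(tu)=r_E(t)r_E(u)$ together with preservation of the edge direction) is a correct and cleanly stated expansion of what the paper leaves implicit in the sentence "The one-line notations of the permutations in a path in $F$ have each number of $[n]\setminus E$ in the same position"; the only caveat is that your alternative justification via \cite[Theorem~1.4]{DyeComp1} is slightly loose as stated, since Dyer's isomorphism identifies $B(C)$ with $B(\mathfrak S_E)$ as a subgraph of $B(\mathfrak S_n)$, and one still needs the (easy) further identification of $B(\mathfrak S_E)$ with $B(\mathfrak S_m)$ via $r_E$; the direct one-line-notation argument you give first is the cleaner route and is the one the paper implicitly takes.
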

\begin{proof}
(\ref{I}). The one-line notations of the permutations in a path in $F$ have each number of $[n]\setminus E$ in the same position. By Proposition~\ref{regoladiflip}, the map $r_E$ is flips preserving and hence $F'$ is a flipclass. It is now straightforward that $r_E$ is a bijection and  satisfies  $r_E(l_i(\Gamma)) = l_i (r_E(\Gamma))$ for all $\Gamma \in F$.

(\ref{II}), (\ref{III}). Straightforward.

(\ref{IV}).
Follows by (\ref{I}), (\ref{III}), and Lemma~\ref{induceordine}.
\end{proof}

\begin{pro}
\label{flipclass-riducibili}
Let  $h \in \mathbb N$. Let $F$ be an $h$-flipclass of $\mathfrak S_n$. Let $E_1, \ldots, E_s$ be the vertices of the connected components of $G(\Gamma)$, for any $\Gamma$ in $F$, and let $e_i$ be the cardinality of $E_i$, for $i\in [s]$. Then the following hold.
\begin{enumerate}
\item $W(F) = \mathfrak S_{E_1} \times  \cdots \times \mathfrak S_{E_s}$;
\label{uno}
\item
\label{due}
 there exist (unique up to isomorphism and order) irreducible flipclasses $F_i$ of $\mathfrak S_{e_i}$, for $i\in [s]$, such that 
$$F \cong F_1 \ast \cdots \ast F_s.$$ 
\item
\label{tre}
Denoting $h_i$ the length of the paths in $F_i$,  for $i\in [s]$, we have $\sum h_i = h$.
\end{enumerate}
\end{pro}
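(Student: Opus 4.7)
The plan is to prove the three assertions in turn, leveraging the structural lemmas established earlier in the section.

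For (\ref{uno}), I would begin by recalling from the preceding lemma that the partition $E_1 \sqcup \cdots \sqcup E_s$ of $E(F)$ by the connected components of $G(\Gamma)$ is independent of the choice of $\Gamma \in F$. Since distinct components $E_i$ and $E_j$ are disjoint as subsets of $[n]$, any two label-transpositions whose supports lie in different components have disjoint support and therefore commute. This yields an internal direct product decomposition $W(F) = W_1 \times \cdots \times W_s$, where $W_i$ is the subgroup generated by those label-transpositions with support contained in $E_i$. It remains to identify $W_i$ with $\mathfrak{S}_{E_i}$: this follows from the classical fact that any set of transpositions of $[m]$ whose support graph is connected generates the full symmetric group $\mathfrak{S}_m$, applied to the connected graph $G(\Gamma)|_{E_i}$.

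For (\ref{due}), I would use Lemma~\ref{FeF'} to transfer $F$ to a flipclass $F''$ of paths in $B(W(F)) = B(\mathfrak{S}_{E_1} \times \cdots \times \mathfrak{S}_{E_s})$. Iteratively applying Lemma~\ref{prodotto diretto 2}, parts~(\ref{primo'}) and~(\ref{secondo'}), to this direct product decomposes $F''$ as a shuffle $F''_1 \ast \cdots \ast F''_s$, where each $F''_i$ is a flipclass of paths in $B(\mathfrak{S}_{E_i})$. Finally, Lemma~\ref{riduzione a E} (applied with $E = E_i$) produces an isomorphic flipclass $F_i$ of $\mathfrak{S}_{e_i}$. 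By construction, $G(\Gamma_i)$ for $\Gamma_i \in F_i$ is connected (it is a relabelling of the single connected component $E_i$ of $G(\Gamma)$), so each $F_i$ is irreducible. Uniqueness up to isomorphism and reordering then follows because the $E_i$ are determined as the connected components of $G(\Gamma)$, and each $F_i$ is determined up to isomorphism by the restriction of $F$ to $E_i$.

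For (\ref{tre}), the identity $\sum h_i = h$ is immediate from the shuffle description $F \cong F_1 \ast \cdots \ast F_s$ produced above: every path in $F$ of length $h$ arises as an interleaving of paths of lengths $h_i$ drawn from the factors $F_i$, and shuffling preserves the total number of edges.

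The main obstacle is the identification $W_i = \mathfrak{S}_{E_i}$ in (\ref{uno}); the rest of the proof is a careful orchestration of Lemmas~\ref{FeF'}, \ref{prodotto diretto 1}, \ref{prodotto diretto 2}, and \ref{riduzione a E}. I note that this step uses in an essential way that we are working in type~$A$: in general Coxeter groups the analogous structural statement for reflection subgroups is more subtle.
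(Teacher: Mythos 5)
Your proposal is correct and follows essentially the same route as the paper: part~(1) via the connected-components-generate-$\mathfrak{S}_{E_i}$ argument (the paper proves the ``classical fact'' you cite by the identity $(i,k)=(i,j)(j,k)(i,j)$), and part~(2) by combining Lemmas~\ref{FeF'}, \ref{riduzione a E}, and \ref{prodotto diretto 2}. You supply a bit more detail than the paper on the irreducibility of the factors $F_i$ and on uniqueness, which the paper leaves implicit, but the argument is the same.
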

\begin{proof}
(\ref{uno}). If a subgroup of $\mathfrak S_n$ contains the transpositions $(i,j)$ and $(j,k)$, then it contains also $(i,k)$ since $(i,k)= (i,j)(j,k)(i,j)$. Hence, for all $r\in [s]$,  the subgroup $W(F)$ contains all transpositions $(i,j)$ with $i,j \in E_r$, so $W(F)$ contains $\mathfrak S_{E_r}$. The assertion follows.

(\ref{due}). By Lemma~\ref{riduzione a E}, the flipclass $F$ is isomorphic to a flipclass in $\mathfrak S_{\sum e_i}$. The assertion follows by Lemma~\ref{prodotto diretto 2}, 

(\ref{tre}). This is clear.
\end{proof}

\begin{rmk}
The isomorphism of Proposition~\ref{flipclass-riducibili} can be described explicitely.
\end{rmk}
\begin{thm}
\label{h+1}
Let  $h \in \mathbb N$. Let $F$ be an $h$-flipclass of $\mathfrak S_n$, and $e=|E(F)|$. Then there exists an $h$-flipclass $F'$ of $\mathfrak S_e$ that is isomorphic to $F$. In particular, if $F$ is irreducible, such $F'$ may be found in $ \mathfrak S_{h+1}$.
\end{thm}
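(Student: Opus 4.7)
The plan is to derive this statement from Lemma \ref{riduzione a E} together with an elementary edge-count for the graph $G(\Gamma)$, and then to handle the irreducible refinement by means of the standard embedding of symmetric groups.

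First I would dispatch the general assertion as an essentially immediate consequence of Lemma \ref{riduzione a E}. Set $E=E(F)$, so $|E|=e$. By part (I) of that lemma, the set
\[
F' := \{ r_E(\Gamma) : \Gamma \in F\}
\]
is a flipclass of $\mathfrak{S}_e$, and since $r_E$ preserves the length of paths, it is an $h$-flipclass. By part (IV), the map $r_E$ is an isomorphism from $F$ to $F'$ in the sense of the definition in Section \ref{definizioni}. This yields the first assertion.

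Next I would establish the numerical bound $e\leq h+1$ needed in the irreducible case. Fix any $\Gamma\in F$. By construction, $E(F)$ is the vertex set of the edge-labeled undirected graph $G(\Gamma)$, so $G(\Gamma)$ has exactly $e$ vertices, and $G(\Gamma)$ has $h$ edges (one for each of the $h$ transpositions labeling $\Gamma$, multiple edges allowed). Irreducibility of $F$ means, by definition, that $G(\Gamma)$ is connected. Since a connected graph on $e$ vertices has at least $e-1$ edges, we obtain
\[
h \;\geq\; e-1, \qquad \text{hence} \qquad e \;\leq\; h+1.
\]

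Finally I would transport $F'$ from $\mathfrak{S}_e$ into $\mathfrak{S}_{h+1}$ along the standard embedding $\iota\colon \mathfrak{S}_e \hookrightarrow \mathfrak{S}_{h+1}$ fixing the elements $e+1,\dots,h+1$. The key observation is that the flip rule of Proposition \ref{regoladiflip} depends only on the relative order, in the one-line notation of the source permutation, of the elements $a,b,c$ actually moved by the two transpositions involved; since every transposition labeling a path in $F'$ is supported on $[e]$, the flip it produces is again supported on $[e]$. Consequently, the orbit of the image $\iota(\Gamma')$ under the flip group in $\mathfrak{S}_{h+1}$ is exactly the image of the orbit of $\Gamma'$ in $\mathfrak{S}_e$; no extraneous flips are introduced. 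Using Lemma \ref{induceordine} to match lexicographic orderings, one sees that the induced bijection is an isomorphism of $h$-flipclasses in the sense of Section \ref{definizioni}, which gives an $h$-flipclass of $\mathfrak{S}_{h+1}$ isomorphic to $F$.

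The only real subtlety, and hence the main point to verify carefully, is the compatibility claim of the final paragraph: one must check that the flip operators in the ambient group $\mathfrak{S}_{h+1}$ really do not enlarge the orbit coming from $\mathfrak{S}_e$. As explained above, this reduces to a direct inspection of Proposition \ref{regoladiflip}, since every length-2 subpath of an embedded path has both labels supported on $[e]$, so the unique flip produced by Proposition \ref{regoladiflip} is again supported on $[e]$.
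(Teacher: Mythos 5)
Your proof is correct and follows essentially the same route as the paper: reduction to $\mathfrak{S}_e$ via $r_E$ (you invoke Lemma~\ref{riduzione a E} directly, whereas the paper first passes through Lemma~\ref{FeF'} to land in $B(W(F))$ before applying $r_E$), the edge-count $e\leq h+1$ from connectedness of $G(\Gamma)$, and the embedding into $\mathfrak{S}_{h+1}$. You are somewhat more explicit than the paper about the last step, in particular about why the flip orbit of the embedded flipclass inside $\mathfrak{S}_{h+1}$ does not grow; the paper treats this as immediate, and your justification via Proposition~\ref{regoladiflip} is sound.
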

\begin{proof}
Denote $E(F)$ simply by $E$. The group $W(F)$ is a subgroup of the subgroup $\mathfrak S_{E}$ of $\mathfrak S_n$. By Lemma~\ref{FeF'}, the flipclass $F$ is isomorphic to some flipclass $F'$ in the graph $B(W(F))$, which is a subgraph of $B(\mathfrak S_E)$. By applying the map $r_E$, we get  the first assertion.

Suppose that $F$ is irreducible. Then $e\leq h+1$ since a connected graph with $h$ edges has at most $h+1$ vertices, and the second assertion follows. 
\end{proof}

The following now readily follows.
\begin{cor}
\label{finiti}
Fix $h$ in $\mathbb N$. There are finitely many isomorphism classes of $h$-flipclasses. 
%In particular, there are finitely many $t$-vectors of flipclasses of paths of length $h$.
\end{cor}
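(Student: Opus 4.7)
The plan is to combine the two main results of the section—the decomposition into irreducible pieces (Proposition~\ref{flipclass-riducibili}) and the embedding of irreducible flipclasses into small symmetric groups (Theorem~\ref{h+1})—and reduce the counting of isomorphism classes to a finite union of finite sets.

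First I would reduce to the irreducible case. By Proposition~\ref{flipclass-riducibili}, every $h$-flipclass $F$ is isomorphic (as an $h$-flipclass) to a shuffle $F_1 \ast \cdots \ast F_s$ where each $F_i$ is an irreducible $h_i$-flipclass and $h_1 + \cdots + h_s = h$. The number of ordered tuples $(h_1, \ldots, h_s)$ of positive integers summing to $h$ is the number of compositions of $h$, which is $2^{h-1}$, hence finite. So the isomorphism class of $F$ is determined by a composition of $h$ together with, for each part $h_i$, the isomorphism class of an irreducible $h_i$-flipclass.

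Next I would bound the number of isomorphism classes of irreducible $k$-flipclasses for each fixed $k \leq h$. By the second assertion of Theorem~\ref{h+1}, every irreducible $k$-flipclass is isomorphic to a $k$-flipclass of $\mathfrak S_{k+1}$. Since $\mathfrak S_{k+1}$ is finite, there are finitely many pairs $(u,v)$ in $\mathfrak S_{k+1}$; for each such pair, the set $P_k(u,v)$ of paths of length $k$ in the Bruhat graph $B(\mathfrak S_{k+1})$ is finite (as $\mathfrak S_{k+1}$ has only finitely many edges), and hence the set of orbits of $\mathfrak F_{k,u,v}$ on $P_k(u,v)$ is finite. Thus the number of irreducible $k$-flipclasses of $\mathfrak S_{k+1}$, and a fortiori the number of isomorphism classes of irreducible $k$-flipclasses, is finite.

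Combining, the set of isomorphism classes of $h$-flipclasses injects into the finite set
\[
\bigsqcup_{\substack{(h_1,\ldots,h_s) \\ h_1+\cdots+h_s=h}} \prod_{i=1}^{s} \mathcal{I}_{h_i}^{\mathrm{irr}},
\]
where $\mathcal{I}_{k}^{\mathrm{irr}}$ denotes the (finite) set of isomorphism classes of irreducible $k$-flipclasses, and therefore it is itself finite. I do not expect a real obstacle here: the statement is essentially a formal consequence of the two preceding results, and the only point requiring a touch of care is verifying that the shuffle decomposition of Proposition~\ref{flipclass-riducibili} is well defined up to reordering of the factors (so that it gives a well-defined map at the level of isomorphism classes), which is explicitly part of that proposition's statement.
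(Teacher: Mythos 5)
Your proof is correct, but it takes a longer route than the paper's. The paper's proof is a one-liner: for any $h$-flipclass $F$, the $h$ transposition labels of a path in $F$ move at most $2h$ letters, so $|E(F)|\leq 2h$; the first (general) assertion of Theorem~\ref{h+1} then embeds $F$, up to isomorphism, in some $\mathfrak S_e$ with $e \leq 2h$, and $\mathfrak S_{2h}$ is finite. Your version invokes only the second (``in particular'') clause of Theorem~\ref{h+1}, which covers the irreducible case, and therefore needs the decomposition of Proposition~\ref{flipclass-riducibili} to reduce to irreducibles first. This is a valid alternative, and it has the mild advantage of yielding the sharper bound $e\leq h+1$ on each irreducible factor; but for the finiteness statement at hand the extra machinery is unnecessary, since the crude bound $|E(F)|\leq 2h$ already finishes the job. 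One small point worth flagging: for injectivity of your map into $\bigsqcup \prod \mathcal{I}_{h_i}^{\mathrm{irr}}$ you implicitly use that the shuffle $\ast$ descends to isomorphism classes (i.e.\ $F_i\cong G_i$ for all $i$ implies $F_1\ast\cdots\ast F_s\cong G_1\ast\cdots\ast G_s$), which is true but not the same as the uniqueness-up-to-order clause of Proposition~\ref{flipclass-riducibili}; alternatively you could avoid this entirely by noting that every isomorphism class is hit by some tuple, which suffices for finiteness without injectivity.
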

\begin{proof}
For any $h$-flipclass $F$, we have the obvious bound  $|E(F)|\leq 2h$. Hence, the result follows by Theorem~\ref{h+1}.
\end{proof}

\section{Combinatorial invariance of the coefficients of $\widetilde{R}$-polynomials}
In this section, we propose a novel approach to tackle the problem of the Combinatorial Invariance of the coefficients of the $\widetilde{R}$-polynomials. This strategy allows us to 
prove the combinatorial invariance of the coefficient  of $q^h$ of the $\widetilde{R}$-polynomial $\widetilde{R}_{u,v}(q)$, for $h\leq 6$. For short, we denote this coefficient by $[q^h]\widetilde{R}_{u,v}(q)$. 

This is achieved by showing that, given $h \leq 6$ and $u,v \in \mathfrak S_n$, with $u\leq v$, the multiset of time-support graphs of the $h$-flipclasses of paths from $u$ to $v$ determines $[q^h]\widetilde{R}_{u,v}(q)$.  More precisely, the multiset of $\iota$-polynomials  determines the coefficient $[q^h]\widetilde{R}_{u,v}(q)$, and in fact, for $h\leq 4$, already the multiset of $t$-vectors suffices.
Indeed, being  the multiset of $\iota$-polynomials of the $h$-flipclasses of paths from $u$ to $v$ a combinatorial invariant (see Remark~\ref{costante}), we get the combinatorial invariance of $[q^h]\widetilde{R}_{u,v}(q)$, for $h\leq 6$.

We note that, in theory, we provide explicit formulas for the coefficients $[q^h]\widetilde{R}_{u,v}(q)$, for $h\leq 6$, for any interval $[u,v]$ of any symmetric group $\mathfrak S_n$. 
\bigskip

We now describe our strategy.

Let $u,v \in \mathfrak S_n$. By Theorem~\ref{Dyertilde}, 
the coefficient $[q^h]\widetilde{R}_{u,v}(q)$ is the number of paths  from $u$ to $v$ of length $h$ that are increasing w.r.t. to any reflection ordering. Hence, we have 
$$[q^h]\widetilde{R}_{u,v}(q)= \sum_F c(F),$$
where $F$ ranges over all $h$-flipclasses of paths from $u$ to $v$. Proving that $c(F_1)=c(F_2)$ holds for all $h$-flipclasses $F_1$ and $F_2$ such that $F_1^u$ and $F_2^u$ are isomorphic shows the Combinatorial Invariance of the coefficient of $q^h$ of the $\widetilde{R}$-polynomial.

The following result provides a general possible way to show the desired equality above. For $r\in \mathbb N$, let 
$$\begin{array}{lll}
\mathcal F_r &=& \{ \text{$F$} : \text{$F$ is an $r$-flipclass in some $\mathfrak S_n$}\}\\
\mathcal F_{r,r+1} &=& \{ \text{$F$} : \text{$F$ is an $r$-flipclass in  $\mathfrak S_{r+1}$}\}\\
\end{array}$$

\begin{defn}
Let $h\in \mathbb N$. A function  $INV$ with domain $\mathcal F_h$ is a {\em$h$-combinatorial invariant} provided that $INV(F_1) = INV(F_2)$ for all $h$-flipclasses $F_1$ and $F_2$ such that $F_1^u$ and $F_2^u$ are isomorphic. Moreover, we say that a combinatorial invariant $INV$ is {\em $h$-good} provided that it detects the function $c$, i.e., $c(F_1) = c(F_2)$ for all $h$-flipclasses $F_1$ and $F_2$ satifying $INV(F_1) = INV(F_2)$.   
\end{defn}

\begin{thm}
\label{strategia}
Fix $h$ in $\mathbb N$. Let $INV$ be an $r$-combinatorial invariant for each $r$ in $[h]$, and suppose that 
\begin{itemize}
%\item $INV$ is $s$-good for $s\in[h-1]$,
\item $INV(F\ast G)$ can be obtained as  a function of $INV(F)$ and $INV(G)$, say $INV(F\ast G) = \mu(INV(F),INV(G))$.
\end{itemize}

Let
\begin{eqnarray*}
\mathcal{A}c_r &=& \{ (INV(F),c(F)) : F \in \mathcal F_{r} \}\\
\mathcal{I}c_r &=&\{ (INV(F),c(F)) : F \in \mathcal F_{r,r+1} \}.\\
\mathcal{R}c_r &=& \{ \Big(\mu(i_1,i_2) , c_1 c_2 \Big) : (i_1,c_1) \in \mathcal{A}c_j, (i_2,c_2) \in \mathcal{A}c_k,  \text{ and } j+k=r \}.
%IRR_r &=& \{ INV(F) : F \in \mathcal F_{r,r+1} \}.\\
%RID_r &=& \{ \sigma(i_1,i_2)  : (i_1,c_1) \in ALLC_j, (i_2,c_2) \in ALLC_k,  \text{ and } j+k=r \}.\\
\end{eqnarray*}
Then, $\mathcal{A}c_r$ is finite, and 
$$\mathcal{A}c_r = \mathcal{I}c_r \cup \mathcal{R}c_r,$$
for each $r$ in $[h]$.

Hence, letting
\begin{eqnarray*}
\mathcal I_h &=& \{ INV(F) : F \in \mathcal F_{h,h+1} \}.\\
\mathcal R_h &=& \{ \mu(i_1,i_2)  : (i_1,c_1) \in \mathcal{A}c_j, (i_2,c_2) \in \mathcal{A}c_k,  \text{ and } j+k=h \},
\end{eqnarray*}
if the cardinality of $\mathcal{I}c_h \cup \mathcal{R}c_h$ is equal to the cardinality of $\mathcal{I}_h \cup \mathcal R_h$, 
then $INV$ is an $r$-good invariant, for each $r\in [h]$.
\end{thm}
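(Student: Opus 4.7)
The plan is to establish three things in order: the finiteness of $\mathcal{A}c_r$, the decomposition $\mathcal{A}c_r = \mathcal{I}c_r \cup \mathcal{R}c_r$, and finally the implication from the cardinality equality at $h$ to $r$-goodness for every $r \in [h]$.

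For finiteness, I would invoke Corollary~\ref{finiti}, which provides finitely many labelled isomorphism classes of $r$-flipclasses. Since $INV$ is a combinatorial invariant and $c$ descends to isomorphism classes (by the definition of isomorphism of flipclasses together with Theorem~\ref{prec=prec}), the map $F \mapsto (INV(F), c(F))$ has finite image.

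For the decomposition, the inclusion $\supseteq$ is immediate from the definitions combined with Lemma~\ref{prodotto diretto 2}(\ref{sesto'}), which gives $c(F_1 \ast F_2) = c(F_1)\,c(F_2)$, and the hypothesis $INV(F_1 \ast F_2) = \mu(INV(F_1), INV(F_2))$. For the inclusion $\subseteq$, I distinguish two cases. If $F \in \mathcal F_r$ is irreducible, then Theorem~\ref{h+1} furnishes a flipclass $F' \in \mathcal F_{r, r+1}$ isomorphic to $F$, whence $(INV(F), c(F)) = (INV(F'), c(F')) \in \mathcal{I}c_r$. If $F$ is reducible, then Proposition~\ref{flipclass-riducibili} yields a factorization $F \cong F_1 \ast \cdots \ast F_s$ with $s \geq 2$, each $F_i$ irreducible of length $h_i \geq 1$, and $\sum_i h_i = r$. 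Grouping as $F_1 \ast (F_2 \ast \cdots \ast F_s)$ produces factors of lengths $j := h_1$ and $k := r - j$, both in $[r-1]$; a second application of Lemma~\ref{prodotto diretto 2}(\ref{sesto'}) and the $\mu$-relation places $(INV(F), c(F))$ in $\mathcal{R}c_r$.

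For the final implication, I note that $\mathcal{I}_h \cup \mathcal{R}_h$ is by definition the image of $\mathcal{I}c_h \cup \mathcal{R}c_h$ under the first projection $\pi_1$, so the hypothesized cardinality equality is exactly the injectivity of $\pi_1$. Combined with $\mathcal{A}c_h = \mathcal{I}c_h \cup \mathcal{R}c_h$ from the previous part, this gives $h$-goodness immediately. For $r < h$, I argue by contradiction: if there were $F_1, F_2 \in \mathcal F_r$ with $INV(F_1) = INV(F_2)$ but $c(F_1) \neq c(F_2)$, I would pick any $F_3 \in \mathcal F_{h-r}$ (which exists since $h - r \geq 1$, e.g.\ a single Bruhat edge when $h-r=1$) and consider $F_1 \ast F_3,\ F_2 \ast F_3 \in \mathcal F_h$. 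Using the $\mu$-relation and Lemma~\ref{prodotto diretto 2}(\ref{sesto'}), together with $c(F_3) \geq 1$ from Proposition~\ref{BiB}, these yield two distinct pairs in $\mathcal{A}c_h$ sharing the same first coordinate, contradicting the injectivity of $\pi_1$ just established.

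The main obstacle, as I see it, is conceptual rather than computational: the key insight is that the single cardinality check at $r = h$ is strong enough to propagate down to every smaller $r$, via the amplification trick of direct-multiplying a would-be counterexample by an arbitrary $F_3 \in \mathcal F_{h-r}$. Once this is recognized, the rest is a careful orchestration of the reducibility decomposition (Proposition~\ref{flipclass-riducibili}), the embedding of irreducible flipclasses into $\mathfrak{S}_{r+1}$ (Theorem~\ref{h+1}), and the multiplicativity of both $INV$ and $c$ under $\ast$.
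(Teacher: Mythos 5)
Your proof is correct and follows the paper's overall structure: the decomposition $\mathcal{A}c_r = \mathcal{I}c_r \cup \mathcal{R}c_r$ is established exactly as in the paper, by splitting into the irreducible case (Theorem~\ref{h+1}) and the reducible case (Proposition~\ref{flipclass-riducibili} together with Lemma~\ref{prodotto diretto 2}(\ref{sesto'}) and the $\mu$-relation). Where you differ slightly is in the justification of finiteness (you cite Corollary~\ref{finiti} directly, while the paper observes it inductively from the decomposition itself) — both are valid. More substantively, you make explicit a step that the paper leaves terse: the paper merely remarks that $r$-goodness amounts to injectivity of the first projection on $\mathcal{A}c_r$, but does not spell out why the cardinality check performed only at level $h$ suffices to force this injectivity at every $r<h$. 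Your amplification argument — multiplying a putative counterexample pair $F_1,F_2\in\mathcal F_r$ by an arbitrary $F_3\in\mathcal F_{h-r}$ and using $c(F_3)\geq 1$ from Proposition~\ref{BiB} to derive a contradiction inside $\mathcal{R}c_h\subseteq\mathcal{A}c_h$ — is exactly the missing bridge, and is a genuine improvement in clarity over the published proof. (As a side note, the paper's displayed line for the reducible case writes $j+k=h$, which should read $j+k=r$; your argument correctly uses $j+k=r$.)
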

\begin{proof}
Let $r\in [h]$. Let $F\in \mathcal F_r$. 
Suppose that $F$ is irreducible. By Theorem~\ref{h+1}, there exists $G\in \mathcal F_{r,r+1}$ that is isomorphic to $F$: in particular $INV(F)=INV(G)$ and $c(F)= c(G)$. So $(INV(F), c(F)) \in \mathcal{I}c_r$.  

Suppose that $F$ is reducible. Then, by Proposition~\ref{flipclass-riducibili}, there exist $j,k\in \mathbb N^+$ with $j+k=h$, $G\in \mathcal F_{j}$, and $G'\in \mathcal F_{k}$ such that $F$ is isomorphic to $G \ast G'$. In particular, $INV(F)=INV(G\ast G')=\mu(INV(G),INV(G'))$. Clearly, $(INV(G), c(G)) \in \mathcal{A}c_j$ and $(INV(G'), c(G')) \in \mathcal{A}c_k$. Moreover, by Lemma~\ref{prodotto diretto 2}(\ref{sesto'}), we have $c(F)= c(G) c(G')$. So $(INV(F), c(F)) \in \mathcal{R}c_r$.  By induction, we can show that $\mathcal{A}c_r$ is finite.

The second statement follows from the first since being $r$-good is equivalent to requiring that the projection of $\mathcal{A}c_r$ to the first factor is injective.
\end{proof}

The strategy of Theorem~\ref{strategia} is viable for all $h$ in $\mathbb N$, but the complexity of the computations increases very rapidly: we have carried out the prescribed calculation by computer up to $h=6$. 

In fact, for $h\leq 5$,  we present here also a different approach that allows us to  prove more than what follows by the strategy. 
Given an  $h$-flipclass $F$, we determine  an explicit formula for $c(F)$  that depends only on the $\iota$-polynomial. Actually, for $h \leq 4$,  the formula for $c(F)$ depends only on the $t$-vector of $F$.

\subsection{$h=3$} 
The first unknown case is for $h=3$ since $[q^0]\widetilde{R}_{u,v}(q)$ is 1 if $u=v$ and 0 if $u\neq v$,  $[q^1]\widetilde{R}_{u,v}(q)$ is 1 if $u \rightarrow v \in E(B(\mathfrak S_n))$ and 0 otherwise, $[q^2]\widetilde{R}_{u,v}(q)$ is 1 if there is a path from $u$ to  $v$ in $B(\mathfrak S_n)$ of length 2 (and, in fact, there are two such paths) and 0 otherwise.
\begin{thm}
\label{corone}
Let $u,v \in \mathfrak S_n$, with $u < v$.
The group  $\mathfrak F_{3,u,v}$ is a dihedral group. Moreover, given  a 3-flipclass $F$,  the following hold.
\begin{enumerate}
\item The unlabelled support graph $S^u_{F}$  and the unlabelled time-support graph $TS^u_{F}$ are both isomorphic to a $k$-crown, for some $k\in \{2,3,4,5\}$.
\item $k$ coincides with the order of the product $f_2f_1$ on $F$. 
\item The maps $i_{S_{F}}$ and $i_{TS_{F}}$ are bijections (i.e, all paths in $S_{F}$ and $TS_{F}$ are effective) and send the increasing paths to the  increasing paths (w.r.t. any reflection ordering).
\item
\label{44444}
The number $c(F)$ is equal to 1 when $k\in\{2,3,4\}$, to 2 when $k=5$.
\end{enumerate}
\end{thm}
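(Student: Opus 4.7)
The plan is to extract the structure of the $3$-flipclass $F$ from the action of the two involutions $f_1$ and $f_2$ on $F$, then count increasing paths via a case split.

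First I would note that $\mathfrak{F}_{3,u,v}=\langle f_1,f_2\rangle$ is generated by two nontrivial involutions (Remark~\ref{osservazione2}(1)), hence is dihedral; letting $k$ denote the order of $f_2f_1$ acting on $F$, one checks that $f_1$ does not lie in $\langle f_2f_1\rangle$ (since $f_1$ fixes the time-$2$ vertex while no nontrivial power of $f_2f_1$ does), so the orbit $F$ has cardinality exactly $2k$ and can be enumerated as the cycle $\Gamma, f_1\Gamma, f_2f_1\Gamma, f_1f_2f_1\Gamma,\ldots$ Successive flips alternate between changing the time-$1$ and time-$2$ vertex, producing exactly $k$ distinct vertices at each of those times in $V(TS_F)$ and arranging them in $TS_F$ along a $2k$-cycle between the two middle levels; adjoining the apexes $(u,0)$ and $(v,3)$ yields the $k$-crown, giving item~(1) for $TS_F^u$ and item~(2). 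For $S_F^u$, Remark~\ref{osservazione2}(4) forbids vertices of $TS_F$ at times of opposite parity from projecting to the same element, while within each middle level the $k$ vertices are already distinct by construction; hence the forgetful map of Remark~\ref{osservazione}(2) is an isomorphism.

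For item~(3), I would apply Lemma~\ref{tutti effettivi}: inside a $k$-crown every length-$2$ interval of $TS_F$ is either empty or a diamond, and $\mathfrak{F}_{3,u,v}$ acts transitively on $F$ by construction, so by Lemma~\ref{equivariante} it acts transitively on $P^{TS_F}_3((u,0),(v,3))$ as well. This yields bijectivity of $i_{TS_F}$, and that of $i_{S_F}$ then follows; since both maps preserve edge-labels, they send increasing paths to increasing paths with respect to any reflection ordering.

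To bound $k$ and prove item~(4), I would split on the two possibilities $\ell(v)-\ell(u)\in\{1,3\}$ compatible with a $3$-path. When $\ell(v)-\ell(u)=3$, Lemma~\ref{sangiovanni} and Lemma~\ref{intervallo} immediately give $k\in\{2,3,4\}$ and $c(F)=1$. When $\ell(v)-\ell(u)=1$, reducible $3$-flipclasses $F=F_1\ast F_2$ of type $(1,2)$ are handled by Lemma~\ref{prodotto diretto 2} (a direct computation yields a $3$-crown), while irreducible ones are reduced via Theorem~\ref{h+1} to a finite enumeration inside $\mathfrak{S}_4$, which is where the bound $k\leq 5$ is verified. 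The counting then proceeds through Proposition~\ref{BiB}, giving $c(F)\leq |F|/2^{h-1}=k/2$, so $c(F)=1$ for $k\in\{2,3\}$ and $c(F)\leq 2$ for $k\in\{4,5\}$. The hard part will be the $k\in\{4,5\}$ subcases with $\ell(v)-\ell(u)=1$: here I would use Lemma~\ref{B-I} to compare the lexicographically-first increasing path with the colexicographically-last one (Proposition~\ref{BiB}, Remark~\ref{colexi}), showing they coincide when $k=4$ (forcing $c(F)=1$) and differ when $k=5$ (producing $c(F)\geq 2$, matching the upper bound). The main obstacle is precisely this delicate $k=4$ versus $k=5$ dichotomy, which seems to require inspecting the finitely many irreducible $3$-flipclasses supplied by Theorem~\ref{h+1}.
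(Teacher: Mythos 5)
Your overall strategy tracks the paper's quite closely: the dihedral observation, the alternating-flips construction producing a $k$-crown with $k$ the order of $f_2f_1$, the identification $S_F^u\cong TS_F^u$, the use of Lemmas~\ref{equivariante} and \ref{tutti effettivi} for effectiveness, and the reduction of irreducible $3$-flipclasses to $\mathfrak S_4$ via Theorem~\ref{h+1}. So up through item~(3) you are essentially reproducing the paper's argument (with the small caveat that Remark~\ref{osservazione2}(4) alone does not rule out $(a,1)$ colliding with $(a,3)$ under the forgetful map — you also need that $(v,1)$ and $(u,2)$ cannot occur in $V(TS_F)$, which follows from strict length-increase along Bruhat edges).

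The genuine gap is in item~(4). Your case split ``$\ell(v)-\ell(u)\in\{1,3\}$'' is impossible: every edge of the Bruhat graph strictly increases length, so a directed path of length $3$ forces $\ell(v)-\ell(u)\geq 3$ (and odd). There is no ``$\ell(v)-\ell(u)=1$'' case. The correct dichotomy, once irreducible flipclasses are pushed into $\mathfrak S_4$, is $\ell(v)-\ell(u)\in\{3,5\}$, and it is exactly the $\ell(v)-\ell(u)=5$ case that your split silently omits. That case is where all the real content lies: the paper shows $k\leq 6$ there via the elementary bound $\ell(v)\geq k$, $\ell(u)\leq\binom{4}{2}-k$, excludes $k=6$ because $[e,w_0]$ has even length $6$, and then explicitly lists the six length-$5$ intervals of $\mathfrak S_4$, finding two with a pair of $2$-crown flipclasses ($c=1$) and four with a single $5$-crown flipclass ($c=2$). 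As written, your argument never reaches these flipclasses, so the bound $k\leq 5$ and the value $c(F)=2$ when $k=5$ are unproved. Relatedly, the reducible case $F=F_1\ast F_2$ is a split on reducibility, not on $\ell(v)-\ell(u)$ (such $F$ can have $\ell(v)-\ell(u)=3$), so filing it under a length case is a second mismatch. Finally, the proposed $k=4$ versus $k=5$ dichotomy via lexicographically-first / colexicographically-last paths is a plausible heuristic but is left as a sketch, and in fact the distinction is not really needed: after the reduction, $k=4$ only occurs with $\ell(v)-\ell(u)=3$, where Lemma~\ref{intervallo} already gives $c(F)=1$, while $k=5$ only occurs in the explicit $\ell(v)-\ell(u)=5$ list.
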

\begin{proof}
 Since $\mathfrak F_{3,u,v}$ is generated by two involutions, it is a dihedral group. 

Let $\Gamma = \big(u {\longrightarrow} x_1   {\longrightarrow} x_2 {\longrightarrow} v\big)$ be a path in $F$.  Let us construct $F$ from $\Gamma$ by alternately applying the flips $f_1$ and $f_2$ (i.e., we consider $\Gamma$, $ f_1(\Gamma)$,  $ f_2f_1 (\Gamma)$,  $ f_1f_2f_1 (\Gamma)$, and so on). When in this process, the action of $f_1$ produces a new element, then also the subsequent action of $f_2$ produces a new element, since there are at most two paths of length 2 with the same starting element and ending element. When, in this process, the action of $f_1$ does not produce a new element, and so we necessarily come back to $x_1$ (again since there are at most two paths of length 2 with the same starting element and ending element), then with the subsequent action of $f_2$ we get back to $\Gamma$. This implies that $S_F$ is a  $k$-crown, where  $k$ is the order of the product $f_2f_1$ on $F$. Since two paths sharing the same vertex $x$ pass through $x$ at the same time,  $S_F^u$  and  $TS_F^u$ are isomorphic.

Moreover, the paths corresponding to the paths of $F$ exhaust all paths in the $k$-crown, and increasing paths correspond to increasing paths.

It remains to prove $k\in\{2,3,4,5\}$ and (\ref{44444}).

%Fix a reflection ordering $\preceq$ on $\mathfrak S_n$. 

If $F$ is reducible, then $F=F_1\ast F_2$ for some $1$-flipclass $F_1$ and some $2$-flipclass $F_2$, and $S_F^u$ and $TS_F^u$ are both isomorphic to a 3-crown. By Lemma~\ref{prodotto diretto 2}(\ref{sesto'}), we have  $c(F)=1$  since $c(F_1)=1$ and $c(F_2)=1$.

Suppose $F$ is irreducible. By Theorem~\ref{h+1}, there exists a $3$-flipclass $G$ of $\mathfrak S_4$ that is isomorphic to $F$.  

Recall that, for every element $x$ in a Coxeter group, the number of reflections $t$ such that $xt\leq x$ is equal to the length of $x$. Hence, as a general fact, we note that, if  $S_{F}^u$ is a $k$-crown, then $\ell(v)\geq k$ and  $\ell(u)\leq |T| - k$. Since 6 is the length of the longest element $w_0$ of $\mathfrak S_4$, which is $4321$, the graph $S_{F}^u$ is a $k$-crown with $k\leq 6$. Actually, $k$ cannot be 6 because the only pair of elements $(u,v)$ with $\ell(v) \geq 6$ and $\ell(u)\leq |T| - 6 =0$ is $(e,w_0)$ and the interval between this two elements has length 6, thus even, and hence has no paths of length 3, since 3 is odd. 

By Lemmas~\ref{sangiovanni}~and~\ref{intervallo}, 
we need to consider only those intervals $[u,v]$ with $\ell(v)-\ell(u)=5$. So $[u,v]\in \{[e,4312], [e,4231], [e,3421], [1243,w_0], [1324,w_0], [2134, w_0]\}$. The intervals $[e,4231]$ and $[1324,w_0]$ are anti-isomorphic: each of them  has two $3$-flipclasses  and each of these two flipclasses has unlabelled support and time-support graphs isomorphic to a 2-crown. By Proposition~\ref{BiB}, each of them has only one increasing path. 

Consider the remaining four intervals. Each of them has one $3$-flipclass $F$, which contains two increasing paths. Moreover, in all these four cases,  $S_F^u$ and $TS_F^u$ are both  isomorphic to a 5-crown.
%Finally,  the case $|E|=3$ is trivial since $\mathfrak S_3$ has only one flipclass of paths of length 3 (which covers the entire group), and this is a 2-crown with only one increasing path.
\end{proof}
\begin{rmk}
Let $F$ be a 3-flipclass of paths from $u$ to $v$. Then
\begin{enumerate}
\item if $|E(F)|\in \{5,6\}$, then $S_{F}^u$ and $TS_F^u$ are both isomorphic to a 3-crown for all $u$ and $v$,
\item if $|E(F)|=3$,  then $S_{F}^u$ and $TS_F^u$ are both isomorphic to a 2-crown for all $u$ and $v$,
\item if $|E(F)|=4$ and $\ell(v) -\ell (u) =5$, then $S_{F}^u$ and $TS_F^u$ are both isomorphic to a 5-crown.
\end{enumerate}
A further analysis completing all cases shows that, if $|E(F)|=4$ and $\ell(v) -\ell (u) =3$, then $S_{F}$  is a 3-crown  except in one case, namely when $F$ corresponds to the flipclass of the path $ \big(2143 \stackrel{(4,2)}{\longrightarrow} 4123   \stackrel{(3,2)}{\longrightarrow} 4132  \stackrel{(2,1)}{\longrightarrow} 4231\big)$ of $\mathfrak S_4$, which is a 4-crown.
\end{rmk}

\begin{rmk}
Note that, for a general flipclass $F$ of paths of length $3$, the support and time-support graphs might be a 5-crown, while this is forbidden for intervals $[u,v]$ with $\ell(v)-\ell(u) = 3$ (see Lemma~\ref{sangiovanni}).
\end{rmk}

\begin{cor}
\label{corh=3}
Let $u,v \in \mathfrak S_n$. The coefficient  $[q^3]\widetilde{R}_{u,v}$ of $q^3$ in $\widetilde{R}_{u,v}$ is

$$[q^3]\widetilde{R}_{u,v} = \sum_{F} c_{t_F},$$

\noindent where $F$ ranges over all $3$-flipclasses of paths from $u$ to $v$, and  $c_{t_F}$ depends only on the $t$-vector $t_F=(t_0,t_1,t_2,t_3)$ of $F$:
\begin{equation*}
 c_{t_F}= \left\{ \begin{array}{ll}
2, & \mbox{if $t_1=5$ (or, equivalently, $t_2=5$)} \\
1, & \mbox{otherwise.} 
\end{array} \right. 
\end{equation*}
%In particular, the coefficient of $q^3$ of the $\widetilde{R}$-polynomials of the symmetric group is a  combinatorial invariant. 
\end{cor}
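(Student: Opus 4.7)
The plan is to assemble the formula directly from Theorem~\ref{Dyertilde}, Theorem~\ref{prec=prec}, and the structural result in Theorem~\ref{corone}. First, fix any reflection ordering $\preceq$ on $\mathfrak S_n$. By Theorem~\ref{Dyertilde}, the coefficient $[q^3]\widetilde{R}_{u,v}(q)$ equals the number of paths from $u$ to $v$ of length $3$ in $B(\mathfrak S_n)$ that are increasing with respect to $\preceq$. The set $P_3(u,v)$ of all such paths is partitioned by the action of the $3$-flip group $\mathfrak F_{3,u,v}$ into the $3$-flipclasses of paths from $u$ to $v$, and by Theorem~\ref{prec=prec} the number of increasing paths inside a single flipclass depends only on the flipclass (and not on $\preceq$); this number is $c(F)$. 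Summing over flipclasses yields
\[
[q^3]\widetilde{R}_{u,v}(q) \;=\; \sum_F c(F),
\]
with $F$ ranging over the $3$-flipclasses of paths from $u$ to $v$.

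The next step is to express $c(F)$ in terms of the $t$-vector $t_F$. Theorem~\ref{corone} asserts that the unlabelled time-support graph $TS_F^u$ is isomorphic to a $k$-crown for some $k\in\{2,3,4,5\}$, and that
\[
c(F) \;=\; \begin{cases} 2, & k=5,\\ 1, & k\in\{2,3,4\}.\end{cases}
\]
Thus it remains only to recover $k$ from the $t$-vector of $F$.

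By inspection of Figure~\ref{esporoma}, a $k$-crown is a graded poset of rank $3$ whose rank sequence is $(1,k,k,1)$: one minimum, $k$ elements of rank $1$, $k$ elements of rank $2$, and one maximum. Since $TS_F$ is the Hasse diagram of the graded poset it induces (Remark~\ref{osservazione}(\ref{ipo})) and $t_F$ is precisely the rank vector of that poset, the isomorphism $TS_F^u\cong\text{$k$-crown}$ gives $t_F=(1,k,k,1)$. In particular $k=t_1=t_2$, so the condition $k=5$ is equivalent to $t_1=5$ (equivalently $t_2=5$), and setting $c_{t_F}=c(F)$ then yields exactly the piecewise formula in the statement.

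There is essentially no hard step here: everything is bookkeeping once Theorems~\ref{Dyertilde}, \ref{prec=prec}, and~\ref{corone} are in hand. The only point requiring a moment's care is the identification of the rank sequence of a $k$-crown with $t_F$, which is immediate from the definitions but is the place where one must check that the $k$ appearing in Theorem~\ref{corone} is really the same $k$ read off from the $t$-vector.
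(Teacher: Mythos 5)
Your proposal is correct and follows the paper's own route exactly: invoke Theorem~\ref{Dyertilde} to identify $[q^3]\widetilde{R}_{u,v}$ with the count of increasing length-$3$ paths, partition these over the $3$-flipclasses, and read off $c(F)$ from Theorem~\ref{corone} via the identification $t_1=t_2=k$ for a $k$-crown. The paper merely compresses this bookkeeping into one sentence; your version makes explicit the role of Theorem~\ref{prec=prec} and the rank-vector identification, but there is no difference in method.
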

\begin{proof}
By Theorem~\ref{Dyertilde},  the coefficient of $q^3$ of $\widetilde{R}_{u,v}$ is the number of increasing paths of length 3 (with respect to any reflection ordering), which, by Theorem~\ref{corone}, is given by the displayed formula.
\end{proof}

\subsection{$h=4$}

We need the followig result.
\begin{pro}
\label{h<4}
 Let $F$ be a $4$-flipclass in $\mathfrak S_n$. Then $S_F$ and $TS_F$ are isomorphic.
\end{pro}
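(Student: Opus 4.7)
My plan is to exploit the criterion from Remark~\ref{osservazione}(\ref{oss stesso}): the forgetful map $\pi \colon (x,i) \mapsto x$ from $TS_F$ to $S_F$ is always a label-preserving surjection of directed graphs, and it is an isomorphism exactly when $\pi$ is injective on vertices, i.e.\ when every $x \in V(S_F)$ appears at a unique time across the paths of $F$. So I would translate the statement into: no vertex is visited at two different times by the paths of $F$.

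First I would use the parity constraint of Remark~\ref{osservazione2}(\ref{quarto}): if $(x,i), (x,j) \in V(TS_F)$ then $i \equiv j \pmod{2}$. For $h=4$, this narrows the possible conflicts to $\{i,j\} \subseteq \{0,2,4\}$ or $\{i,j\} \subseteq \{1,3\}$. Since every path in $F$ starts at $u$ at time $0$ and ends at $v$ at time $4$, and since every Bruhat edge strictly increases length, no intermediate vertex can equal $u$ or $v$. This eliminates all conflicts inside $\{0,2,4\}$, so the problem reduces to showing that no vertex $x \in V(S_F) \setminus \{u,v\}$ occurs at time $1$ in one path of $F$ and at time $3$ in another.

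Next I would dispatch the reducible case using Proposition~\ref{flipclass-riducibili} and Lemma~\ref{prodotto diretto 2}. Writing $F \cong F_1 \ast F_2$ with $h_1 + h_2 = 4$ and $h_j \geq 1$, we get $S_F = S_{F_1} \times S_{F_2}$ and $TS_F = TS_{F_1} \times TS_{F_2}$, and it suffices to prove $S_{F_j} \cong TS_{F_j}$ as edge-labelled directed graphs for $h_j \in \{1,2,3\}$: for $h_j = 1$ both are a single labelled edge, for $h_j = 2$ both are labelled diamonds (immediate from the definition of a flipclass of length $2$), and for $h_j = 3$ this is encoded in Theorem~\ref{corone}. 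The Cartesian product of label-preserving isomorphisms is label-preserving, so the reducible case follows.

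For the irreducible case, Theorem~\ref{h+1} tells me that $F$ is isomorphic to an irreducible $4$-flipclass of $\mathfrak{S}_5$, and Corollary~\ref{finiti} guarantees only finitely many isomorphism classes to check. I would then verify on this finite list (by direct inspection, aided by a computer, in keeping with the computational style of the paper) that no irreducible $4$-flipclass contains a vertex occurring at both time $1$ and time $3$. The hard part is precisely this step: a purely local obstruction in the spirit of Lemma~\ref{P property} does not seem available, since the phenomenon to be excluded lives across the whole flip orbit and mixes a vertex which is a flip partner of $u$ in one path with one which is a flip partner of $v$ in another; hence I expect that the finite reduction of Section~\ref{definizioni} and Section~6 is the natural, and essentially the only, route to the conclusion.
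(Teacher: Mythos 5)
Your proposal is correct and follows essentially the same route as the paper: dispatch the reducible case via Proposition~\ref{flipclass-riducibili} and Lemma~\ref{prodotto diretto 2} using the fact that $S_G \cong TS_G$ for all flipclasses of length $\leq 3$, then reduce the irreducible case to $\mathfrak S_5$ via Theorem~\ref{h+1} and finish by a finite computer check. Your explicit reformulation via injectivity of the forgetful map (Remark~\ref{osservazione}) and the parity argument localizing possible conflicts to times $1$ and $3$ is a clean clarification, though the paper also shortcuts the $\ell(v)-\ell(u)=4$ subcase conceptually via Lemma~\ref{intervallo} before invoking the computer.
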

\begin{proof}
If $F$ is reducible, then the assertion follows by Lemma~\ref{prodotto diretto 2} and Proposition~\ref{flipclass-riducibili}, since $S_G$ and $TS_G$ are isomorphic for all $h$-flipclasses $G$ with $h\leq 3$.

Suppose that $F$ is irreducible. By Theorem~\ref{h+1}, the flipclass $F$ is isomorphic to a $4$-flipclass $G$ of $\mathfrak S_5$.

Let  $u$ (respectively, $v$) be the first (respectively, last) element of the paths in $G$. 
% Hence $n \in \{4,5\}$, since there are no paths of length $4$ in $\mathfrak S_n$ with $n<4$.
If $\ell(v) - \ell(u) =4$, then the assertion follows by Lemma~\ref{intervallo}.
Hence, we may suppose $\ell(v) - \ell(u) \geq 6$, since $\ell(v) - \ell(u)$ must be even.
%If  $n=4$, the only interval  $[u, v]$ with $\ell(v) - \ell(u) \geq 6$ is $[e, w_0]$. 
%The paths of length 4 from $e$ to $w_0$ are all in the same orbit, 
The group $\mathfrak S_5$ has 276 intervals  $[u, v]$ with $\ell(v) - \ell(u)$ even and $\geq 6$.  %$\in \{6, 8, 10\}$. 
We checked by computer calculations that the statement holds in all of these cases.
\end{proof}

\begin{rmk}
For $h=5$, there are flipclasses having support graph and time-support graph not isomorphic. For example,
% for $u=e$ and $v=234123121$,
 the paths $\Gamma =  \big(e {\longrightarrow} 2 {\longrightarrow} 32 {\longrightarrow} 232  {\longrightarrow} 2342 {\longrightarrow} 234123121  \big)$  and  $\Delta =   \big(e {\longrightarrow} 232 {\longrightarrow} 3423 {\longrightarrow} 34123  {\longrightarrow} 34123121  {\longrightarrow} 234123121  \big)$ are in the same flipclass $F$. Since $\Gamma$ and $\Delta$ pass through $232$ at time 1 and at time 3, respectively, 
 the support graph $S_F$ and the time-support graph $TS_F$ are not isomorphic.
%f-vector $f=(1,9,25,24,9,1)$,
\end{rmk}

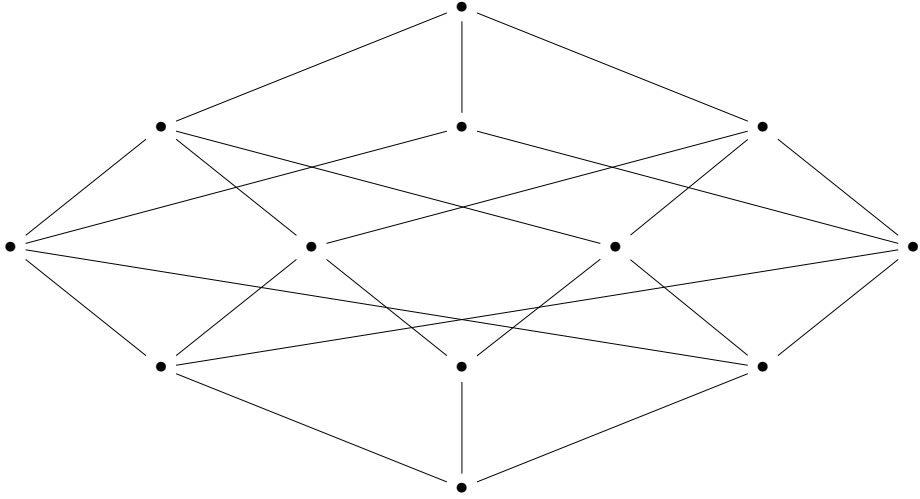
\begin{figure}[h]
    \centering
\scalebox{.8}{    \begin{tikzpicture}
    \node (u) at (0,0) {$\bullet$};
    %\node at (0,-0.3) {$u$};
    \node (a1) at (-5,2) {$\bullet$};
    \node (a2) at (0,2) {$\bullet$};
    \node (a3) at (5,2) {$\bullet$};

	\node (b1) at (-7.5,4) {$\bullet$};
    \node (b2) at (-2.5,4) {$\bullet$};
    \node (b3) at (2.55,4) {$\bullet$};
    \node (b4) at (7.5,4) {$\bullet$};    
    
    \node (c1) at (-5,6) {$\bullet$};
    \node (c2) at (0,6) {$\bullet$};
    \node (c3) at (5,6) {$\bullet$};
    \node (v) at (0,8) {$\bullet$};
    %\node at (0,8.3) {$v$};
    \path[-]
    (u) edge (a1)
    (u) edge (a2)
    (u) edge (a3)
    
    (a2) edge (b2)
    (a2) edge (b3)
    (b2) edge (c1)
    (b2) edge (c3)
    (b3) edge (c1)
    (b3) edge (c3)
    
    (a1) edge (b1)
    (a1) edge (b2)
    (a1) edge (b4)
    (a3) edge (b1)
    (a3) edge (b3)
    (a3) edge (b4)
    
    (b1) edge (c1)
    (b4) edge (c3)
    
    (b1) edge (c2)
    (b4) edge (c2)
    
    (c1) edge (v)
    (c2) edge (v)
    (c3) edge (v) 
    ;   
\end{tikzpicture}
}
    \caption{The time-support graph of a flipclass with $t$-vector $(1,3,4,3,1)$}
    \label{espopancia}
\end{figure}

The following result is needed in the proof of Theorem~\ref{teoh=4}. It treats the flipclasses having  $t$-vector equal to $(1,3,4,3,1)$. As we see in the proof of  Theorem~\ref{teoh=4}, when an interval of length $4$ has more than one flipclass and one of them has more than one increasing path, all the other flipclasses have $t$-vector equal to $(1,3,4,3,1)$.  
\begin{pro}
\label{13431}
Let $F$ be a $4$-flipclass having $t$-vector equal to $t_F=(t_0,t_1,t_2,t_3,t_4)=(1,3,4,3,1)$. Then $S_F^u$ and $TS_F^u$ are both isomorphic to the directed graph in Figure~\ref{espopancia}. Every path of $TS_F$ (or, equivalently, $S_F$) is effective. Furthermore, $c(F)=1$.

\end{pro}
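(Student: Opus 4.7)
The plan is to reduce Proposition~\ref{13431} to a finite analysis by splitting on reducibility and combining the tools of Section~\ref{definizioni} and Section~5.

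If $F = F_1 \ast F_2$ is reducible, then by Lemma~\ref{prodotto diretto 2}(\ref{quarto'}) the $t$-vector of $F$ is the convolution of the $t$-vectors of $F_1$ and $F_2$. A brief case analysis over $(h_1, h_2) \in \{(1,3),(2,2),(3,1)\}$, using Lemma~\ref{intervallo} for lengths $1$ and $2$ and Theorem~\ref{corone} (which describes $3$-flipclass time-support graphs as $k$-crowns) for length $3$, shows that the only way to obtain $(1,3,4,3,1)$ is to shuffle a $1$-flipclass (an edge, $t$-vector $(1,1)$) with a $3$-flipclass of $2$-crown type (with $t$-vector $(1,2,2,1)$). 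Explicit computation of the cartesian product $TS_{F_1} \times TS_{F_2}$ then reproduces Figure~\ref{espopancia}; every path is effective, since any shuffle of effective paths in each factor is effective; and $c(F) = c(F_1) c(F_2) = 1$ by Lemma~\ref{prodotto diretto 2}(\ref{sesto'}) together with Theorem~\ref{corone}(\ref{44444}).

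For the irreducible case, Theorem~\ref{h+1} lets me assume $F$ is a $4$-flipclass of $\mathfrak S_5$. From $t_F = (1,3,4,3,1)$ and Lemma~\ref{almeno2}, the unique vertex $(u,0)$ must have out-degree exactly $3$, since the three time-$1$ vertices each need an in-edge and $(u,0)$ is their only possible source; the symmetric statement gives $(v,4)$ in-degree $3$. Next, since $(u,0)$ is connected to every time-$1$ vertex, the number of length-$2$ paths in $TS_F$ from $(u,0)$ to a time-$2$ vertex $\bar y$ equals the in-degree of $\bar y$; Lemma~\ref{P property} bounds this by $2$ and Lemma~\ref{almeno2} bounds it below by $2$, so each time-$2$ vertex has in-degree exactly $2$. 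Symmetrically, every time-$2$ vertex has out-degree $2$. The time-$1$ out-degrees thus form the multiset $\{2,3,3\}$, and similarly the time-$3$ in-degrees. A short combinatorial analysis, applying Lemma~\ref{P property} to pairs in times $1$ and $3$ to forbid $\ge 3$ length-$2$ paths, shows that these degree constraints admit a unique matching of the two bipartite halves up to isomorphism, reproducing Figure~\ref{espopancia}.

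For effectiveness I would invoke Lemma~\ref{tutti effettivi}: the edge counts together with Lemma~\ref{P property} imply that every nonempty length-$2$ interval of $TS_F$ is a diamond, so by Lemma~\ref{equivariante} the flip action lifts to $P_4^{TS_F}((u,0),(v,4))$, and $i_{TS_F}(F)$ is a single orbit under this lifted action. It remains to check that this orbit exhausts all $16$ paths of $P_4^{TS_F}((u,0),(v,4))$, or equivalently that the flip action is transitive on $P_4^{TS_F}((u,0),(v,4))$. The main obstacle, however, is the last assertion $c(F) = 1$: Proposition~\ref{BiB} only gives the bound $c(F) \le |F|/2^{h-1} = 2$, so one must rule out $c(F) = 2$. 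By Theorem~\ref{prec=prec} it suffices to count increasing paths with respect to a single convenient reflection ordering (e.g., the lexicographic one); the structural constraints above leave only a small, explicit list of realizations of $F$ inside $\mathfrak S_5$, and I would complete both transitivity and the count $c(F) = 1$ by direct inspection of this list, in the spirit of the computer-assisted verification used in Proposition~\ref{h<4}.
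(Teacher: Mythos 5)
Your proposal follows a genuinely different route from the paper's proof, which does \emph{not} split into reducible and irreducible cases. The paper works uniformly on any $4$-flipclass $F$ with $t$-vector $(1,3,4,3,1)$: it fixes a path through a time-$3$ vertex $\bar{c}_i$, observes that the interval graph $A_{\bar{u},\bar{c}_i}$ contains the time-support graph of a $3$-flipclass (hence is a $k$-crown by Theorem~\ref{corone}), shows via $P_{\bar{x},\bar{y}}$ and Lemma~\ref{almeno2} that $A_{\bar{u},\bar{c}_i}$ is in fact exactly that crown, and then exploits the arithmetic constraint $8=3+3+2$ to conclude that there are two $3$-crowns and one $2$-crown at the top (and, by symmetry, at the bottom). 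It then determines all remaining edges one by one. For $c(F)=1$, the paper gives an explicit argument: any increasing path in $TS_F$ restricted to a $3$-crown $A_{\bar{u},\bar{c}_i}$ or $A_{\bar{a}_i,\bar{v}}$ must be the lexicographically-first (or colexicographically-last) one, forcing it to agree with the lex-first path of $F$. No computer check appears in the paper's proof of this proposition.

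Your reducible case is sound: the only convolution of admissible $t$-vectors giving $(1,3,4,3,1)$ is $(1,1)\ast(1,2,2,1)$, Lemma~\ref{prodotto diretto 2} does produce Figure~\ref{espopancia}, and $c(F)=1\cdot 1=1$ follows. Your degree bookkeeping in the irreducible case is also correct as far as it goes: $(u,0)$ has out-degree $3$, each time-$2$ vertex has in-degree and out-degree exactly $2$, and the time-$1$ out-degrees sum to $8$. However, the step from ``sum is $8$ and each is $\geq 2$'' to the multiset $\{2,3,3\}$ is not automatic: you must rule out $\{2,2,4\}$. This can be done (a time-$1$ vertex of out-degree $4$ would produce $\geq 3$ length-$2$ paths to some time-$3$ vertex, violating $P$-property), but it is not spelled out. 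More substantively, the assertions that the degree constraints force a unique graph up to isomorphism, that the flip action is transitive on $P_4^{TS_F}$, and that $c(F)=1$ are all deferred to ``direct inspection of a small list in $\mathfrak S_5$.'' While this is in principle viable (and consistent with the computer-assisted spirit of Proposition~\ref{h<4} and Theorem~\ref{teoh=4}), it is not a proof as written: the list is not exhibited, and the crucial final claim $c(F)=1$ -- which you correctly identify as the main obstacle since Proposition~\ref{BiB} only yields $c(F)\leq 2$ -- is left unverified. The paper avoids this gap entirely by the lex-first/colex-last argument on the $2$/$3$-crown structure.
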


\begin{proof}
%Let $F$ be a flipclass having f-vector equal to $(f_0, f_1, f_2,f_3,f_4)=(1,3,4,3,1)$. 
Let  $u$ (respectively, $v$) be the first (respectively, last) element of the paths in $F$. 
Let $TS^u_{F}=(V,E)$ and $V = \{(u,0), (a_i,1) \textrm { for } i\in \{1,2,3\}, (b_j,2)  \textrm { for } j\in \{1,2,3,4\},  (c_k,3) \textrm { for } k\in \{1,2,3\}, (v,4) \}$. For short, we let $\bar{x} = (x,i)$.

We shall use several times the  property $P_{\bar{x},\bar{y}}$ of Lemma~\ref{P property}.

Given $\bar{x},\bar{y} \in V$, we denote by $A_{\bar{x},\bar{y}}$ the subgraph of $TS_{F}$ given by the vertices and the edges of the paths of $TS^u_{F}$ from $\bar{x}$ to $\bar{y}$. Equivalently, we can define $A_{\bar{x},\bar{y}}$ as the subgraph of $TS^u_{F}$ given by the interval $[\bar{x},\bar{y}]$ in the poset induced by  $TS_{F}$. %A priori, $A_{\bar{x},\bar{y}}$ is not a time-support graph but a union of time-support graphs. 

We claim that $A_{\bar{u},\bar{c_i}}$, for $i\in \{1,2,3\}$, is either a 2-crown or a 3-crown. Let us prove the claim.

Let us study, say, $A_{\bar{u},\bar{c_1}}$. Fix a path from $u$ to $v$ in $F$ that passes through the vertex $c_1$, say  $ \big(u {\longrightarrow} a_1 {\longrightarrow} b_1 {\longrightarrow} c_1   {\longrightarrow} v \big)$.
Since every path in the $3$-flipclass $F'$ of the subpath $ \big(u {\longrightarrow} a_1 {\longrightarrow} b_1 {\longrightarrow} c_1  \big)$ can be extended to a path in $F$, we have that $A_{\bar{u},\bar{c_1}}$ contains $TS_{F'}$ as a subgraph. By Theorem~\ref{corone}, since $t_1 = 3$,  the time-support graph $TS^u_{F'}$ is either a 2-crown or a 3-crown. 
A  priori, $TS^u_{F'}$ does not coincide with $A_{\bar{u},\bar{c_1}}$: we show that actually it does.

Suppose first that  $TS^u_{F'}$  is a 2-crown. We may suppose that its vertex set is $\{\bar{u}, \bar{a}_1, \bar{a}_2, \bar{b}_1, \bar{b}_2, \bar{c}_1\}$, and so $\bar{u} \longrightarrow \bar{a}_i \in E$ for $i \in \{1,2\}$, $\bar{a}_i \longrightarrow \bar{b}_j \in E$ for $i \in \{1,2\}$ and $j \in \{1,2\}$, $\bar{b}_i \longrightarrow \bar{c}_1 \in E$ for $i \in \{1,2\}$.
We have $\bar{b}_3 \longrightarrow \bar{c}_1 \notin E$. Indeed, if  $\bar{b}_3 \longrightarrow \bar{c}_1 \in E$, then we could find a path in $F$ of the form $ \big(u {\longrightarrow} a_k {\longrightarrow} b_3 {\longrightarrow} c_1   {\longrightarrow} v \big)$ for some  $k$ in $\{1,2,3\}$. If $k\in \{1,2\}$, then   ($P_{\bar{a}_k,\bar{c}_1}$) would not be true. So $k=3$ but, if we apply the 1-st flip operator to this path, we obtain a path  $ \big(u {\longrightarrow} a_k {\longrightarrow} b_3 {\longrightarrow} c_1   {\longrightarrow} v \big)$ for some  $k$ in $\{1,2\}$, again contradicting ($P_{\bar{a}_k,\bar{c_1}}$). Similarly,  $\bar{b}_4 \longrightarrow \bar{c}_1 \notin E$. Moreover, $a_3 \longrightarrow b_i \notin E$ by ($P_{\bar{u},\bar{b}_i}$), for $i \in \{1,2\}$. Hence,  $A_{\bar{u},\bar{c}_1}$ is a 2-crown.

Now suppose  that  $TS^u_{F'}$  is a 3-crown. We may suppose that its vertex set is the set $\{\bar{u}, \bar{a}_1, \bar{a}_2, \bar{a}_3, \bar{b}_1, \bar{b}_2, \bar{b}_3, \bar{c}_1\}$ and  $\bar{a}_1 \longrightarrow \bar{b}_i \in E$ for $i \in \{1,2\}$, $\bar{a}_2 \longrightarrow \bar{b}_i \in E$ for $i \in \{2,3\}$, and $\bar{a}_3 \longrightarrow \bar{b}_i \in E$ for $i \in \{1,3\}$. We also have   $\bar{u} \longrightarrow \bar{a}_i \in E$ for $i \in \{1,2,3\}$ and  $\bar{b}_i \longrightarrow \bar{c}_1 \in E$ for $i \in \{1,2,3\}$.
We have  $\bar{b}_4 \longrightarrow \bar{c}_1 \notin E$. Indeed, if $\bar{b}_4 \longrightarrow \bar{c}_1 \in E$, then  $\bar{a}_i \longrightarrow \bar{b}_4 \notin E$  by ($P_{\bar{a}_i,\bar{c}_1}$) for $i \in \{1,2,3\}$, and this contradicts the fact that each vertex except $\bar{u}$ has target-valence at least 1.  Moreover, $a_1 \longrightarrow b_3 \notin E$ by ($P_{\bar{a}_1,\bar{c}_1}$), $a_2 \longrightarrow b_1 \notin E$ by ($P_{\bar{a}_2,\bar{c}_1}$), and $a_3 \longrightarrow b_2 \notin E$ by ($P_{\bar{a}_3,\bar{c}_1}$).  Hence,  $A_{\bar{u},\bar{c}_1}$ is a 3-crown.

The claim is proved.

Let $i \in \{1,2,3,4\}$. The element $\bar{b}$ is the source of exactly two edges by Lemma~\ref{almeno2} and ($P_{\bar{b}_i,\bar{v}}$). Since the unique partition of 8 as a sum of 2's and 3's is $8= 3 + 3 + 2$, among the $A_{\bar{u},\bar{c}_i}$, for  $i \in \{1,2,3\}$, there are two 3-crowns and one 2-crown. 

Since the $t$-vector is symmetric, we can use an upside down argument to show that, among the $A_{\bar{a}_i,\bar{v}}$, for  $i \in \{1,2,3\}$, there are two 3-crowns and one 2-crown.

We may suppose that  $A_{\bar{u},\bar{c}_1}$ and  $A_{\bar{u},\bar{c}_3}$ are 3-crowns and $A_{\bar{u},\bar{c}_2}$ is  a 2-crown. 

Let us consider the 2-crown $A_{\bar{u},\bar{c}_2}$. We may suppose that the vertex set of $A_{\bar{u},\bar{c}_2}$  is  $\{\bar{u}, \bar{a}_1,  \bar{a}_3, \bar{b}_1, \bar{b}_4, \bar{c}_2\}$, and so $\bar{u} \longrightarrow \bar{a}_i \in E$ for $i \in \{1,3\}$, $\bar{a}_i \longrightarrow \bar{b}_j \in E$ for $i \in \{1,3\}$, $j \in \{1,4\}$, $\bar{b}_i \longrightarrow \bar{c}_2 \in E$ for $i \in \{1,4\}$, $\bar{a}_2 \longrightarrow \bar{b}_i \notin E$, for $i \in \{1,4\}$, and $\bar{b}_i \longrightarrow \bar{c}_2 \notin E$, for $i \in \{2,3\}$. Hence,  $\bar{a}_2 \longrightarrow \bar{b}_i \in E$, for $i \in \{2,3\}$, by Lemma~\ref{almeno2}. By ($P_{\bar{u}, \bar{b}_i}$), for $i \in \{2,3\}$ and Lemma~\ref{almeno2}, we may suppose  $\bar{a}_1 \longrightarrow \bar{b}_2 \in E$, $\bar{a}_3 \longrightarrow \bar{b}_2 \notin E$, $\bar{a}_3 \longrightarrow \bar{b}_3 \in E$, and $\bar{a}_1 \longrightarrow \bar{b}_3 \notin E$.

 By Lemma~\ref{almeno2},   $\bar{b}_i \longrightarrow \bar{c}_j \in E$, for $i \in \{2,3\}$ and $j \in \{1,3\}$.
 
 By ($P_{ \bar{b}_1,\bar{v}}$) and Lemma~\ref{almeno2}, we may suppose  $\bar{b}_1 \longrightarrow \bar{c}_1 \in E$ and $\bar{b}_1 \longrightarrow \bar{c}_3 \notin E$. 
Since  $A_{\bar{u},\bar{c}_1}$ is a 3-crown, we have  $\bar{b}_4 \longrightarrow \bar{c}_1 \notin E$. Hence,  By Lemma~\ref{almeno2},   $\bar{b}_4 \longrightarrow \bar{c}_3 \in E$.

We have described all edges of $TS^u_{F}$ and obtained the graph in Figure~\ref{espopancia}. By Proposition~\ref{h<4}, it coincides also with $S^u_F$. Hence the first statement holds.

Every path is effective by Lemma~\ref{tutti effettivi}. %applied to the graph in Figure~\ref{espopancia}.

Fix a reflection ordering. It remains to show that there exists a unique increasing path from $u$ to $v$.

Let $\Gamma$ be the lexicographically-first path from $u$ to $v$ in the flipclass $F$, and let $\bar{\Gamma}$ be the corresponding path $\bar{u}$ to $\bar{v}$ in $TS_F$. By Proposition~\ref{BiB},  the path $\Gamma$ is increasing, and so is $\bar{\Gamma}$, since $\Gamma$ and $\bar{\Gamma}$ have the same labels in the same order.

Let $\Delta$ be an increasing path  from $\bar{u}$ to $\bar{v}$ in $TS_F$. We want to show $\Delta = \bar{\Gamma}$. Keep the notations as above.
 Suppose $\bar{c}_i \in \Delta$, for an index $i$ in $\{1,3\}$. Then, by Proposition~\ref{BiB} and Theorem~\ref{corone}, the restriction of $\Delta$ from $\bar{u}$ to $\bar{c}_i$ is the lexicographically-first of 
the 3-crown  $A_{\bar{u},\bar{c}_i}$. Then, the first edge of $\Delta$ coincides with the first edge of $\bar{\Gamma}$. Let $i\in \{1,2,3\}$ be such that this common edge is  $\bar{u} \longrightarrow a_i$. The restrictions of $\Delta$ and $\bar{\Gamma}$ from $\bar{a}_i$ to $\bar{v}$ give two increasing paths of the $3$-flipclass given by $A_{\bar{a}_i,\bar{v}}$, which is either a 2- or a 3-crown.These two restricted paths hence coincide, by Theorem~\ref{corone}.  So $\Delta = \bar{\Gamma}$. 

Suppose $\bar{c}_2 \in \Delta$. Then, $\bar{a}_i \in \Delta$  for an index $i $ in $\{1,3\}$, the subgraph $A_{\bar{a}_i,\bar{v}}$ is a 3-crown and an upside down argument considering the colexicographically-last path shows $\Delta = \bar{\Gamma}$.
%restriction of $\Delta$ from $\bar{a}_i$ to $\bar{v}$ is the colexicographically-last of the 3-crown  $A_{\bar{a}_i,\bar{v}}$, . Then the first edge of $\Delta$ coincide with the first edge of $\bar{\Gamma}$. Now consider the restriction of $\Delta$ obtained by deleting the first edge. This must be the lexicographically-first of either a 2-crown or a 3-crown, hence it coincides with the restriction of   $\bar{\Gamma}$ obtained by deleting the first edge. So $\Delta = \bar{\Gamma}$. 
\end{proof}

\begin{thm}
\label{teoh=4}
Let $F$ be a $4$-flipclass. The number $c(F)$ depends only on the $t$-vector $t_F=(t_0,t_1,t_2,t_3,t_4)$ of $F$ and is given by the following formula:
\begin{equation*}
 c(F)= \left\{ \begin{array}{ll}
3, & \mbox{if $t_F\in \{(1,5,10,5,1), (1,8,14,8,1)\}$,} \\
2, & \mbox{if $t_1+t_3\geq 12$ and $t_F\neq (1,8,14,8,1)$} \\
1, & \mbox{otherwise.} 
\end{array} \right. 
\end{equation*}
\end{thm}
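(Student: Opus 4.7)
The plan is to combine the reducible/irreducible decomposition of Proposition~\ref{flipclass-riducibili} with the finite reduction of Theorem~\ref{h+1}, in the style of Theorem~\ref{strategia} applied to the $t$-vector as invariant. The key multiplicative inputs are that, by Lemma~\ref{prodotto diretto 2}, both $c$ and the $t$-vector behave well on a shuffle: $c(F_1 \ast \cdots \ast F_s) = \prod c(F_i)$, and the $t$-vector of the shuffle is the convolution of the $t$-vectors of the factors. Since $t_F$ depends only on the unlabelled time-support graph, it is an $r$-combinatorial invariant for each $r$, so Theorem~\ref{strategia} is applicable and the problem reduces to finitely many checks.

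For the reducible case I would enumerate the partitions $4=3+1=2+2=2+1+1=1+1+1+1$. The irreducible $h$-flipclasses for $h\in\{1,2\}$ are unique, with $c=1$ and $t$-vectors $(1,1)$ and $(1,2,1)$, while the irreducible $3$-flipclasses are the $k$-crowns for $k\in\{2,3,4,5\}$, with $c=1$ for $k\leq 4$ and $c=2$ for $k=5$ by Theorem~\ref{corone}. Convolving these $t$-vectors across all decompositions yields a finite list of reducible pairs $(t_F,c(F))$, each of which I would check is compatible with the stated formula; in particular $c=3$ never arises reducibly, forcing the two $c=3$ cases to come from irreducible flipclasses.

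For the irreducible case, Theorem~\ref{h+1} lets me work entirely in $\mathfrak{S}_5$, where only finitely many irreducible $4$-flipclasses exist. I would enumerate them by computer: for each, compute $t_F$ and count $c(F)$ as the number of increasing paths in the lexicographic order, which suffices by Theorem~\ref{prec=prec} and Proposition~\ref{BiB}. The three-case formula is then read off the resulting table; the structurally transparent cases $t_F=(1,1,1,1,1)$ and $t_F=(1,3,4,3,1)$ are moreover accounted for independently by Lemma~\ref{intervallo} and Proposition~\ref{13431}, providing a useful sanity check and isolating exactly where $c = 1$ comes from in the small-$t_1+t_3$ regime.

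The hard part is not the enumeration itself but rather the \emph{a posteriori} observation that $c(F)$ depends only on $t_F$: \emph{a priori} one would expect to need the full $\iota$-polynomial, and indeed the subsections for $h=5,6$ do invoke $\iota_F$ because $t_F$ no longer suffices. Establishing the sufficiency of $t_F$ for $h=4$ thus amounts to two compatibility checks of tabular nature, realizing the hypothesis of Theorem~\ref{strategia}: first, among the irreducible examples in $\mathfrak{S}_5$, any two with the same $t$-vector must produce the same $c$; second, at the interface with the reducible data, any $t$-vector achievable both reducibly and irreducibly must produce a single value of $c$. Once these are verified, the three-branch formula follows by inspection of the finite table.
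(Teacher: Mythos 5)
Your proposal takes essentially the same approach as the paper: split into reducible and irreducible cases, handle reducibility via Proposition~\ref{flipclass-riducibili} and Lemma~\ref{prodotto diretto 2}, reduce the irreducible case to $\mathfrak S_5$ via Theorem~\ref{h+1}, and finish by a finite computer enumeration. The one execution difference worth noting is that you propose to count $c(F)$ per flipclass directly (via Proposition~\ref{BiB} and Theorem~\ref{prec=prec}), whereas the paper's computation records only the aggregates $i_{u,v}=[q^4]\widetilde R_{u,v}$, $n_{u,v}$, and the $t$-vectors, and then genuinely needs Proposition~\ref{13431} to distribute $i_{u,v}$ among flipclasses sharing an interval; your version demotes Proposition~\ref{13431} to a sanity check, which is a touch cleaner. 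One small slip: no $4$-flipclass has $t$-vector $(1,1,1,1,1)$ --- by Lemma~\ref{almeno2} the vertex $(u,0)$ has out-degree at least $2$, so $t_1\geq 2$ --- and the ``structurally transparent'' case you meant to single out via Lemma~\ref{intervallo} is $\ell(v)-\ell(u)=4$, where the paper also invokes Hultman's classification to bound $t_1+t_3$.
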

\begin{proof}
%Fix a reflection ordering $\preceq$. 
If $F$ is reducible, by Proposition~\ref{flipclass-riducibili} and Theorem~\ref{corone}, the graphs $S_{F}^u$ and $TS_{F}^u$  are both isomorphic to the cartesian product of either
\begin{enumerate}
\item a segment and a $k$-crown for some $k$ in $\{2,3,4,5\}$, or
\item two diamonds.
\end{enumerate}
%It is trivial that a 1-flipclass is an adge and a 2-flipclass is a \lq\lq square \rq\rq, and that both have only one increasing path. 
Then, by Lemma~\ref{prodotto diretto 2}(\ref{quinto'})(\ref{sesto'}), the flipclass $F$ satisfies $c(F)=1$  and  $t_1 + t_3 \leq 10$ except when $TS_F$ is isomorphic to the cartesian product of a segment and a 5-crown, in which case $c(F)=2$ and  and the $t$-vector is equal to $(1,6,10,6,1)$.

Suppose that $F$ is irreducible. By Theorem~\ref{h+1}, the flipclass $F$ is isomorphic to a $4$-flipclass $G$ of $\mathfrak S_5$.

Let  $u$ (respectively, $v$) be the first (respectively, last) element of the paths in $G$. 
% Hence $n \in \{4,5\}$, since there are no paths of length $4$ in $\mathfrak S_n$ with $n<4$.

 Suppose $\ell(v) - \ell(u) =4$. By Lemma~\ref{intervallo}, one has $c(F)=1$ and that the graph $TS_{F}^u$ is given by the Hasse diagram of the Bruhat interval $[u,v]$. By  \cite[Theorem~3.6]{Hul} for the symmetric group, the $t$-vector $(t_0, t_1, t_2, t_3, t_4)$ of $F$  satisfies 
$$(t_1,t_3) \in \{(3,3), (3,4), (4,3), (4,4), (5,5), (6,5), (5,6)\}.$$ 
Hence $t_1+t_3 <12$; furthermore, also when $(t_1,t_3) = (5,5)$, the $t$-vector $t_F$ cannot be $(1,5,10,5,1)$ by  Lemma~\ref{intervallo}(\ref{555}).

Hence we may suppose $\ell(v) - \ell(u) \geq 6$, since $\ell(v) - \ell(u)$ must be even.

The group $\mathfrak S_5$ has 276 intervals  $[u, v]$ with $\ell(v) - \ell(u)$ even and $\geq 6$. For each of these intervals $[u,v]$, we asked the computer to determine: 
\begin{itemize}
\item the coefficient $[q^4]\widetilde{R}_{u,v}$ (which coincides with the number of increasing paths $i_{u,v}$ from $u$ to $v$ of length 4);
\item the number $n_{u,v}$ of $4$-flipclasses of paths  from $u$ to $v$;
\item for each of these flipclasses, its $t$-vector.
\end{itemize}

The computations show $i_{u,v} \leq 5$. If $i_{u,v} =1$, then  $n_{u,v} = 1$ and the $t$-vector of the only flipclass satisfies $t_1+t_3 \leq 8$.
If $i_{u,v} =2$, then either  $n_{u,v} = 1$ and the $t$-vector $t$ of the only flipclass satisfies both $t_1+t_3 \geq 12$ and $t \neq (1,8,14,8,1)$, or  $n_{u,v} = 2$ and both flipclasses have $t$-vector satisfying $t_1+t_3 \leq 7$. If $i_{u,v} =3$, then  either $n_{u,v}= 3$ and always all three flipclasses have  $(1,3,4,3,1)$ as $t$-vector, or  $n_{u,v} = 1$ and the $t$-vector of the only flipclass is either $(1,5,10,5,1)$ or $(1,8,14,8,1)$.
 If $i_{u,v} =4$, then  $n_{u,v} = 2$ and the two $t$-vectors are $(1,3,4,3,1)$ and $(1,5,10,5,1)$.
 If $i_{u,v} =5$, then  $n_{u,v} = 3$ and the three $t$-vectors are $(1,3,4,3,1)$ with multiplicity 2 and $(1,5,10,5,1)$.

The assertion follows since, by Proposition~\ref{13431}, the flipclasses with   $(1,3,4,3,1)$ as $t$-vector contain exactly one increasing path and, hence, the flipclasses with $t$-vector  $(1,5,10,5,1)$ contain exactly three increasing paths.
\end{proof}

\begin{cor}
\label{corh=4}
Given $u,v \in \mathfrak S_n$, the coefficient  $[q^4]\widetilde{R}_{u,v}$ of $q^4$ in $\widetilde{R}_{u,v}$ is

$$[q^4]\widetilde{R}_{u,v} = \sum_{F} c_{t_F},$$

\noindent where $F$ ranges over all orbits of  $\mathfrak F_{4,u,v}$ and  $c_{t_F}$ depends only on the $t$-vector $t_F=(t_0,t_1,t_2,t_3,t_4)$ of $F$:
\begin{equation*}
 c_{t_F}= \left\{ \begin{array}{ll}
3, & \mbox{if $t_F\in \{(1,5,10,5,1), (1,8,14,8,1)\}$,} \\
2, & \mbox{if $t_1+t_3\geq 12$ and $t_F\neq (1,8,14,8,1)$} \\
1, & \mbox{otherwise.} 
\end{array} \right. 
\end{equation*}
%In particular, the coefficient of $q^4$ of the $\widetilde{R}$-polynomials of the symmetric group is a  combinatorial invariant. 
\end{cor}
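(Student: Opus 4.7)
The plan is to assemble the corollary directly from results already in place, most notably Theorem~\ref{teoh=4} and Theorem~\ref{Dyertilde}. The statement of the corollary is really just a packaging of Theorem~\ref{teoh=4} into a formula for the coefficient $[q^4]\widetilde{R}_{u,v}$, so almost no new work is required.

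First, fix any reflection ordering $\preceq$ on $T$ (for definiteness, the lexicographic one). By Theorem~\ref{Dyertilde}, the coefficient $[q^4]\widetilde{R}_{u,v}(q)$ equals the number of paths from $u$ to $v$ in $B(\mathfrak S_n)$ of length~$4$ that are increasing with respect to $\preceq$. Next, observe that the set $P_4(u,v)$ decomposes as a disjoint union of the orbits of the $4$-flip group $\mathfrak F_{4,u,v}$, i.e.\ of the $4$-flipclasses $F$ of paths from $u$ to $v$. Hence the number of increasing paths from $u$ to $v$ of length~$4$ splits as a sum over these flipclasses of the number $c(F)$ of increasing paths inside each $F$; here we use Theorem~\ref{prec=prec}, which guarantees that $c(F)$ is well-defined independently of the choice of reflection ordering, so Definition after Theorem~\ref{prec=prec} applies.

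Finally, I invoke Theorem~\ref{teoh=4}, which establishes that for every $4$-flipclass $F$ the number $c(F)$ is determined by the $t$-vector $t_F = (t_0,t_1,t_2,t_3,t_4)$, according to the three cases displayed there. Substituting this expression into the sum yields exactly
\[
[q^4]\widetilde{R}_{u,v} = \sum_F c_{t_F},
\]
with $c_{t_F}$ as in the piecewise definition of the corollary, the sum ranging over all orbits of $\mathfrak F_{4,u,v}$ on $P_4(u,v)$.

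There is no real obstacle at this stage: all the combinatorial difficulty has been absorbed into the proof of Theorem~\ref{teoh=4} (in particular, Proposition~\ref{13431} handling the delicate case $t_F=(1,3,4,3,1)$, the reduction to irreducible flipclasses via Proposition~\ref{flipclass-riducibili}, and the finite check in $\mathfrak S_5$ provided by Theorem~\ref{h+1}). The only point worth stressing in the write-up is that the sum is genuinely well-defined on the poset isomorphism class of $[u,v]$: by Remark~\ref{costante} the multiset of $t$-vectors of the $4$-flipclasses of paths from $u$ to $v$ is a combinatorial invariant of the interval, so the right-hand side gives an intrinsic, poset-theoretic formula for $[q^4]\widetilde{R}_{u,v}$, thereby yielding in particular the combinatorial invariance of this coefficient.
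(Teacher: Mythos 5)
Your proof is correct and follows essentially the same route as the paper: it applies Theorem~\ref{Dyertilde} to identify $[q^4]\widetilde{R}_{u,v}$ with the count of increasing length-$4$ paths, partitions these by $4$-flipclass, and invokes Theorem~\ref{teoh=4} for the value of $c(F)$ on each class. The extra remarks on Theorem~\ref{prec=prec} and Remark~\ref{costante} are sound clarifications, but the underlying argument is identical to the paper's.
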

\begin{proof}
By Theorem~\ref{Dyertilde},  the coefficient of $q^4$ of $\widetilde{R}_{u,v}$ is the number of increasing paths of length 4 from $u$ to $v$, which, by Theorem~\ref{teoh=4}, is given by the displayed formula.
\end{proof}

\subsection{$h=5$}
For $h=5$, the $t$-vector is not a $h$-good invariant (i.e., there are flipclasses with same $t$-vector but different number of increasing paths), but the $\iota$-polynomial is. Actually, the triple given by the $t$-vector, the sequence of the out-degrees of the vertices of the time-support graph of time 1, and the sequence of the in-degrees of the vertices of the time-support graph of time 4 is a $5$-good invariant. This triple is encoded in the $\iota$-polynomial since it is equivalent to $\big(\iota(1,1,t), \frac{\partial \iota}{\partial t}(1,y,0), \frac{1}{(4)!}\frac{\partial^{4} \iota}{\partial t^{4}}(x,1,0) \big)$. For short, given a  flipclass $F$, we let $\operatorname{succ}_F$ and $\operatorname{prec}_F$ denote, respectively, $\frac{\partial \iota_F}{\partial t}(1,y,0)$ and $ \frac{1}{(4)!}\frac{\partial^{4} \iota_F}{\partial t^{4}}(x,1,0)$.

For each $h\in \mathbb N$,  denote by $tVec_h$ the set of vectors occurring as a $t$-vector of an $h$-flipclass  $F$  in the symmetric group $\mathfrak S_n$, for some $n$ in $\mathbb N$. By Corollary~\ref{finiti},  the set  $tVec_h$ is finite, for all $h\in \mathbb N$. The following proposition is the outcome of computer calculations.
\begin{pro}
\label{unionedisgiunta}
The set $tVec_5$ has cardinality 104 and is the disjoint union of the following sets $C_i$, for $i\in[7]$, and $D$:

$$
\begin{array}{lll}
C_1 &=&
\{(1, 3, 5, 5, 3, 1),
 (1, 3, 5, 6, 4, 1),
 (1, 4, 6, 5, 3, 1),
 (1, 4, 7, 7, 4, 1),
 (1, 4, 8, 9, 5, 1),\\
&&
 (1, 4, 9, 10, 5, 1),
 (1, 4, 10, 12, 6, 1),
 (1, 5, 9, 8, 4, 1),
 (1, 5, 10, 9, 4, 1),
 (1, 5, 10, 10, 5, 1),\\
&&
 (1, 5, 10, 11, 6, 1),
 (1, 6, 11, 10, 5, 1),
 (1, 6, 12, 10, 4, 1),
 (1, 6, 13, 13, 6, 1),
 (1, 6, 14, 15, 7, 1),\\
&&
 (1, 6, 14, 16, 8, 1),
 (1, 6, 15, 18, 9, 1),
 (1, 7, 15, 14, 6, 1),
 (1, 7, 17, 17, 7, 1),
 (1, 8, 16, 14, 6, 1),\\
&&
 (1, 9, 18, 15, 6, 1)\}\\
&& \\
C_2  &=&
\{(1, 6, 12, 12, 6, 1),
 (1, 6, 13, 14, 7, 1),
 (1, 7, 14, 13, 6, 1),
 (1, 7, 16, 16, 7, 1),
 (1, 7, 17, 19, 9, 1),\\
&&
 (1, 7, 18, 20, 9, 1),
 (1, 8, 19, 19, 8, 1),
 (1, 8, 20, 20, 8, 1),
 (1, 8, 20, 21, 9, 1),
 (1, 8, 21, 22, 9, 1),\\
&&
 (1, 8, 21, 23, 10, 1),
 (1, 8, 21, 24, 11, 1),
 (1, 8, 22, 24, 10, 1),
 (1, 8, 22, 25, 11, 1),
 (1, 9, 19, 17, 7, 1),\\
&&
 (1, 9, 20, 18, 7, 1),
 (1, 9, 21, 20, 8, 1),
 (1, 9, 22, 21, 8, 1),
 (1, 9, 23, 23, 9, 1),
 (1, 10, 23, 21, 8, 1),\\
&&
 (1, 10, 24, 22, 8, 1),
 (1, 11, 24, 21, 8, 1),
 (1, 11, 25, 22, 8, 1)\}
\end{array}
$$
$$
\begin{array}{lll}
C_3 &=&
\{(1, 6, 15, 15, 6, 1),
 (1, 6, 17, 19, 7, 1),
 (1, 7, 17, 18, 7, 1),
 (1, 7, 18, 17, 7, 1),
 (1, 7, 18, 19, 7, 1),\\
&&
 (1, 7, 18, 20, 7, 1),
 (1, 7, 19, 17, 6, 1),
 (1, 7, 19, 18, 7, 1),
 (1, 7, 19, 21, 8, 1),
 (1, 7, 20, 18, 7, 1),\\
 &&
(1, 8, 21, 19, 7, 1),
 (1, 9, 22, 22, 9, 1),
 (1, 9, 23, 24, 10, 1),
 (1, 9, 24, 26, 11, 1),
 (1, 9, 25, 27, 11, 1),\\
 &&
(1, 10, 24, 23, 9, 1),
 (1, 10, 25, 25, 10, 1),
 (1, 10, 26, 27, 11, 1),
 (1, 10, 27, 27, 10, 1),\\
&&
 (1, 10, 27, 28, 11, 1),
 (1, 10, 27, 29, 12, 1),
 (1, 10, 28, 29, 11, 1),
 (1, 10, 28, 30, 12, 1),\\
&&
 (1, 11, 26, 24, 9, 1),
 (1, 11, 27, 25, 9, 1),
 (1, 11, 27, 26, 10, 1),
 (1, 11, 28, 27, 10, 1),\\
&&
 (1, 11, 29, 28, 10, 1),
 (1, 11, 30, 30, 11, 1),
 (1, 12, 29, 27, 10, 1),
 (1, 12, 30, 28, 10, 1)\}\\
\end{array}
$$
$$
\begin{array}{lll}
C_4 &=&
\{(1, 9, 26, 27, 10, 1),
 (1, 9, 27, 29, 11, 1),
 (1, 10, 27, 26, 9, 1),
 (1, 10, 28, 28, 10, 1),\\
&&
 (1, 10, 29, 30, 11, 1), 
(1, 10, 31, 34, 13, 1),
 (1, 11, 28, 28, 11, 1),
 (1, 11, 29, 27, 9, 1),\\
&&
 (1, 11, 29, 30, 12, 1),
 (1, 11, 30, 29, 10, 1),
 (1, 11, 31, 32, 12, 1),
 (1, 12, 30, 29, 11, 1),\\
&&
 (1, 12, 32, 31, 11, 1),
 (1, 13, 34, 31, 10, 1)\}\\
\end{array}
$$
$$
\begin{array}{lll}
C_5 &=&
\{(1, 9, 24, 25, 9, 1),
 (1, 9, 25, 24, 9, 1),
 (1, 12, 34, 34, 12, 1),
 (1, 12, 35, 36, 13, 1),\\
&&
 (1, 13, 36, 35, 12, 1)\}\\
&&\\
C_6 &=&
\{(1, 8, 27, 28, 9, 1), (1, 9, 28, 27, 8, 1)\}\\
&&\\
C_7& =&
\{(1, 13, 40, 40, 13, 1)\}\\
&&\\
D &=&  \{ (1, 11, 31, 31, 11, 1), (1, 7, 17, 18, 8, 1),  (1, 9, 24, 25, 10, 1),  (1, 8, 18, 17, 7, 1), \\
&&(1, 11, 32, 33, 12, 1),  (1, 10, 25, 24, 9, 1),  (1, 12, 33, 32, 11, 1)\}.
\end{array}
$$
\end{pro}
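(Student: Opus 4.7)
The plan is to reduce the enumeration of $tVec_5$ to a finite computation by combining Corollary~\ref{finiti} with the explicit structural bounds from Theorem~\ref{h+1} and Proposition~\ref{flipclass-riducibili}. Since $tVec_5$ depends only on isomorphism classes of $5$-flipclasses, and every such class is either irreducible (hence represented by a $5$-flipclass of $\mathfrak{S}_6$ by Theorem~\ref{h+1}) or a shuffle of irreducible flipclasses of smaller lengths (by Proposition~\ref{flipclass-riducibili}), the task splits into two independent enumerations.

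First, I would enumerate all irreducible $5$-flipclasses of $\mathfrak{S}_6$: for each pair $u \leq v$ in $\mathfrak{S}_6$ admitting a path of length $5$, generate $P_5(u,v)$, partition it into orbits under $\mathfrak{F}_{5,u,v}$, discard the reducible orbits, construct the time-support graph of each remaining orbit, and record its $t$-vector. Second, I would enumerate the reducible $5$-flipclasses by ranging over the partitions $5 = 4+1 = 3+2 = 3+1+1 = 2+2+1 = 2+1+1+1 = 1+1+1+1+1$: for each partition, take irreducible flipclasses of the appropriate lengths (using the enumerations of lengths $1,2,3,4$ already available via Theorem~\ref{corone}, Theorem~\ref{teoh=4}, and Lemma~\ref{intervallo}) and compute the $t$-vector of their shuffle. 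By Lemma~\ref{prodotto diretto 2}, the time-support graph of a shuffle is the cartesian product of the factor time-support graphs, so the resulting $t$-vector is the convolution $t_i(F_1 \ast F_2) = \sum_{j+k=i} t_j(F_1)\, t_k(F_2)$.

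The union of the two lists, after deduplication, is $tVec_5$. The partition into $C_1, \ldots, C_7, D$ is then obtained by a further tabulation: for each $\tau \in tVec_5$ and each flipclass $F$ with $t_F = \tau$ encountered in the enumeration, I compute $c(F)$ using Proposition~\ref{BiB} to identify the lexicographically-first increasing path. Those $\tau$ for which the resulting set $\{c(F) : t_F = \tau\}$ is a singleton $\{i\}$ are placed in $C_i$, while those $\tau$ for which the set contains more than one value are collected in $D$; verifying $|tVec_5| = 104$ and the explicit composition of each piece is then a bookkeeping check against the tabulated data.

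The main obstacle is the scale of the enumeration: $\mathfrak{S}_6$ has many intervals, orbit generation under $\mathfrak{F}_{5,u,v}$ can produce orbits of substantial size, and building the time-support graphs requires care. Efficient canonicalization via the reduction map $r_E$ of Lemma~\ref{riduzione a E} is essential both to avoid duplicate enumeration of isomorphic irreducible flipclasses and to reconcile the direct enumeration in $\mathfrak{S}_6$ with the product constructions arising from the shuffles of smaller factors.
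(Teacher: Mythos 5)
Your proposal is correct and matches the paper's approach: the paper states only that the proposition is ``the outcome of computer calculations,'' and the computational strategy it has in mind (made explicit in the proof of Theorem~\ref{teoh=5}) is exactly the one you describe — reduce irreducible $5$-flipclasses to $\mathfrak S_6$ via Theorem~\ref{h+1}, handle reducible ones by the convolution of $t$-vectors coming from Lemma~\ref{prodotto diretto 2}, and sort $t$-vectors into $C_i$ or $D$ according to whether $c(F)$ is constant on the fiber. One small imprecision: Proposition~\ref{BiB} identifies only the lexicographically-first increasing path, whereas computing $c(F)$ requires counting all increasing paths with respect to a fixed reflection ordering; this is a phrasing issue and does not affect the substance of the plan.
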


\begin{defn}
\label{CF}
We define a function $\bar{c}$ from $\mathcal F_5$ to $\mathbb N$. The function $\bar{c}(F)$ depends only on the triple $(\text{$t$-vector $t_F$}, \operatorname{succ_F}(y),  \operatorname{prec_F}(x))$; thus it depends only on  the time-support graph, or, more precisely, on the $\iota$-polynomial of $F$. (Actually, $\bar{c}(F)$ depends only on the $t$-vector except in 7 cases out of 104.)
%Hence, the function $c$ is a combinatorial invariant.  

Let $F$ be a $5$-flipclass. If $t_F\in C_i$, for $i\in[7]$, we let 
$$\bar{c}(F)=i.$$ 

Otherwise, by Proposition~\ref{unionedisgiunta}, we have $t_F\in D$. 

If $t_F= (1, 11, 31, 31, 11, 1)$, then 
% $C(F)= 4$ if $nss(F) \in \{(4,4,4,4,6,6,6,6,7,7,8),(4,4,4,4,6,6,6,7,7,7,7)\}$, $C(F)= 5$ if $nss(F) \in \{(4,4,4,6,6,6,6,6,6,6,8)\}$.
$$ \bar{c}(F)= \left\{ \begin{array}{ll}
4, & \mbox{if $\operatorname{succ}_F \in \{4y^4 + 4y^6 + 2y^7 + y^8, 4y^4 + 3y^6 + 4y^7 \}$;} \\
5, & \mbox{if $\operatorname{succ}_F  = 3y^4+ 7y^6+y^8.$} \\
\end{array} \right. $$

If $t_F= (1, 7, 17, 18, 8, 1)$, then 
%$C(F)= 2$ if $nss(F) \in \{(4,4,4,4,6,6,6)\}$, $C(F)= 1$ if $nss(F) \in \{(4,5,5,5,5,5,5)\}$.
$$ \bar{c}(F)= \left\{ \begin{array}{ll}
1, & \mbox{if $\operatorname{succ}_F  = y^4+ 6y^5;$}\\
2, & \mbox{if $\operatorname{succ}_F  = 4y^4+ 3y^6.$}\\ 
\end{array} \right. $$

If $t_F= (1, 9, 24, 25, 10, 1)$, then 
%$C(F)= 2$ if $nss(F) \in \{(4,4,4,5,5,5,5,8,8),(4,4,4,5,5,6,6,7,7),   (4,4,5,5,5,5,6,7,7) \}$, $C(F)= 3$ if $nss(F) \in \{(4,4,4,4,4,6,6,8,8)\}$.
$$ \bar{c}(F)= \left\{ \begin{array}{ll}
2, & \mbox{if $\operatorname{succ}_F \in \{3y^4 + 4y^5 + 2y^8, 3y^4 +2y^5 +  2y^6 + 2y^7 , 2y^4 + 4y^5 + y^6 + 2y^7\}$;} \\
3, & \mbox{if  $\operatorname{succ}_F  = 5y^4+ 2y^6 + 2y^8.$} \\
\end{array} \right. $$

If $t_F= (1, 8, 18, 17, 7, 1)$, then 
%$C(F)= 1$ if $nss(F) \in \{(4,4,4,4,5,5,5,5)\}$, $C(F)= 2$ if $nss(F) \in \{(4,4,4,4,4,4,5,7)\}$.
$$ \bar{c}(F)= \left\{ \begin{array}{ll}
1, & \mbox{if  $\operatorname{succ}_F  = 4y^4+ 4y^5$;} \\
2, & \mbox{if  $\operatorname{succ}_F  = 6y^4+ y^5 + y^7.$} \\
\end{array} \right. $$

If $t_F= (1, 11, 32, 33, 12, 1)$, then 
%$C(F)= 4$ if $nss(F) \in \{(4,4,4,6,6,6,6,7,7,7,7)\}$, $C(F)= 5$ if $nss(F) \in \{(4,4,4,6,6,6,6,6,6,8,8),(4,4,6,6,6,6,6,6,6,6,8)\}$.
$$ \bar{c}(F)= \left\{ \begin{array}{ll}
4, & \mbox{if $\operatorname{succ}_F  = 3y^4+ 4y^6 + 4y^7$;} \\
5, & \mbox{if $\operatorname{succ}_F \in \{3y^4 + 6y^6 + 2y^8, 2y^4 +8y^6 + y^8 \}.$} \\
\end{array} \right. $$

If $t_F= (1, 10, 25, 24, 9, 1)$, then 
%$C(F)= 2$ if $nss(F) \in \{(4,4,4,4,4,5,6,6,6,7),(4,4,5,5,5,5,5,5,5,7), (4,4,5,5,5,5,5,5,6,6)\}$, $C(F)= 3$ if $nss(F) \in \{(4,4,4,4,4,4,5,7,7,7)\}$.
$$ \bar{c}(F)= \left\{ \begin{array}{ll}
2, & \mbox{if  $\operatorname{succ}_F \in \{5y^4 + y^5 + 3y^6+ y^7, 2y^4 +7y^5 + y^7 , 2y^4 + 6y^5 + 2y^6\}$;} \\
3, & \mbox{if $\operatorname{succ}_F  = 6y^4+ y^5 + 3y^7$.} \\
\end{array} \right. $$

If $t_F= (1, 12, 33, 32, 11, 1)$, then $\bar{c}(F)$ is either $4$ or  $5$. The function $\operatorname{succ}$ does not distinguish the two cases,  but its dual function $\operatorname{prec}$ does. 
%$C(F) =4$ if $npp(F) \in \{(4,4,4,6,6,6,6,7,7,7,7)\}$, $C(F)= 5$ if $npp(F) \in \{(4,4,4,6,6,6,6,6,6,8,8), (4,4,6,6,6,6,6,6,6,6,8)\}$.
$$ \bar{c}(F)= \left\{ \begin{array}{ll}
4, & \mbox{if $\operatorname{prec}_F = 3y^4 + 4y^6 + 4y^7$;} \\
5, & \mbox{if $\operatorname{prec}_F \in \{3y^4 + 6y^6 + 2y^8, 2y^4 +8y^6 + y^8 \}$.} \\
\end{array} \right. $$
The given values above exhaust all possibilities.
\end{defn}

\begin{thm}
\label{teoh=5}
Let $F$ be a 5-flipclass of $\mathfrak S_n$. The number $c(F)$  is given by the function $\bar{c}(F)$ as defined in Definition~\ref{CF}.
\end{thm}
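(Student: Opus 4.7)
The plan is to apply the general strategy of Theorem~\ref{strategia} with invariant $INV(F)$ equal to the triple $(t_F,\operatorname{succ}_F,\operatorname{prec}_F)$, which is the specialization of the $\iota$-polynomial used in Definition~\ref{CF}. By Remark~\ref{costante} this is a combinatorial invariant, and by Lemma~\ref{prodotto diretto 2}(\ref{quinto'}) it is multiplicative under shuffles, so $INV(F_1\ast F_2)$ is a function of $INV(F_1)$ and $INV(F_2)$; hence the hypotheses of Theorem~\ref{strategia} are in force. The task is then to exhibit, case by case, that $INV$ is $5$-good, i.e.\ that on the finite set $\mathcal Ac_5$ the first projection is injective.

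I would first dispatch the reducible case. By Proposition~\ref{flipclass-riducibili} every reducible $5$-flipclass $F$ decomposes, up to isomorphism, as $F\cong F_1\ast F_2$ with $h_1+h_2=5$, $h_i\geq 1$, and by Lemma~\ref{prodotto diretto 2}(\ref{sesto'}) we have $c(F)=c(F_1)\,c(F_2)$. The factors $c(F_i)$ are all known: for $h_i\in\{1,2\}$ trivially, for $h_i=3$ by Theorem~\ref{corone}, and for $h_i=4$ by Theorem~\ref{teoh=4}. One then tabulates for each shuffle-type the resulting $t$-vector and $\operatorname{succ}/\operatorname{prec}$, verifies that it lands in some $C_i$ of Proposition~\ref{unionedisgiunta} (never in $D$, since the $D$-vectors will turn out to be achievable only by irreducible flipclasses), and checks that $c(F_1)c(F_2)$ agrees with the value $i$ assigned to that $C_i$ by Definition~\ref{CF}.

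For the irreducible case, Theorem~\ref{h+1} reduces the infinity of all $5$-flipclasses to the finite set of $5$-flipclasses of $\mathfrak S_{h+1}=\mathfrak S_6$, which, according to the count recorded in the introduction, has cardinality $36634$. The plan is computational: enumerate these flipclasses, and for each compute (i) its $t$-vector, (ii) its $\operatorname{succ}_F$ and $\operatorname{prec}_F$, and (iii) its number of increasing paths, e.g.\ with respect to the lexicographic reflection ordering, which suffices by Theorem~\ref{prec=prec} and is conveniently handled via Proposition~\ref{BiB}. Partitioning the resulting data by the first coordinate produces $\mathcal{A}c_5$; one then checks that the projection to the first factor is injective, which by Theorem~\ref{strategia} gives $5$-goodness of $INV$, and reads off the function $\bar c$ from the fibres. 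The values tabulated in Definition~\ref{CF} are exactly what this enumeration yields.

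The main obstacle is not conceptual but bookkeeping: there are $36634$ irreducible cases, and one must certify that the $7$ exceptional $t$-vectors in $D$ are genuinely refined by $(\operatorname{succ}_F,\operatorname{prec}_F)$. In particular, for $t_F=(1,12,33,32,11,1)$ the values of $\operatorname{succ}_F$ coincide on flipclasses with $c=4$ and $c=5$, so one must verify that $\operatorname{prec}_F$ separates them (this is the reason why both $\operatorname{succ}$ and $\operatorname{prec}$ are needed and why the full $\iota$-polynomial, rather than just the $t$-vector, is the right invariant at $h=5$). Once the computer sweep confirms the seven subcases of Definition~\ref{CF} and the matching of $c(F)$ with $\bar c(F)$ on every $(t,\operatorname{succ},\operatorname{prec})$ arising, the reducible analysis above patches the result to arbitrary $\mathfrak S_n$, completing the proof.
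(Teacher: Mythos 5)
Your overall architecture matches the paper's: reduce irreducible $5$-flipclasses to $\mathfrak S_6$ via Theorem~\ref{h+1}, enumerate the $36634$ of them by computer, and handle reducible $5$-flipclasses by multiplicativity through Proposition~\ref{flipclass-riducibili} and Lemma~\ref{prodotto diretto 2}. The only structural divergence is that you route the argument through Theorem~\ref{strategia}, whereas the paper treats $h=5$ by the direct argument (the strategy is reserved for $h=6$); since Theorem~\ref{strategia}'s proof internally uses the same irreducible/reducible split, the content is equivalent.

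There is, however, one concrete error in your treatment of the reducible case. You assert that the seven $t$-vectors in $D$ ``will turn out to be achievable only by irreducible flipclasses,'' so that tabulating shuffle-types only ever produces $t$-vectors in $\bigcup C_i$. This is false: taking $F=F_1\ast F_2$ with $F_1$ a $1$-flipclass and $F_2$ a $4$-flipclass with $t_{F_2}=(1,6,11,7,1)$ yields a reducible $F$ with $t_F=(1,7,17,18,8,1)\in D$ (and $c(F)=1\cdot 2=2$, since $t_1+t_3=13\geq 12$ for $F_2$, by Theorem~\ref{teoh=4}); dually $t_{F_2}=(1,7,11,6,1)$ gives $t_F=(1,8,18,17,7,1)\in D$. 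Following your plan literally, these families would either falsify your ``lands in some $C_i$'' check or be skipped, leaving the reducible analysis incomplete. The fix is to drop the parenthetical claim and, for reducible $F$, compare $c(F_1)c(F_2)$ against the \emph{full} Definition~\ref{CF}, including the $D$-branches, using the computed $\operatorname{succ}_F$ (and $\operatorname{prec}_F$ where needed). A smaller technical point: Lemma~\ref{prodotto diretto 2}(\ref{quinto'}) gives multiplicativity of $\iota$ itself, not directly of your triple $(t_F,\operatorname{succ}_F,\operatorname{prec}_F)$; to satisfy the hypothesis of Theorem~\ref{strategia} you should verify explicitly that, e.g., $\operatorname{succ}_{F_1\ast F_2}=\operatorname{succ}_{F_1}\,y^{(t_{F_2})_1}+y^{(t_{F_1})_1}\operatorname{succ}_{F_2}$, so that the triple of a shuffle is indeed a function of the two triples. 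This is true but deserves a sentence.
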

\begin{proof}
Suppose that $F$ is irreducible. By Theorem~\ref{h+1}, the flipclass $F$ is isomorphic to a $5$-flipclass $G$ of $\mathfrak S_6$. By computer analysis, we construct all 5-flipclasses in $\mathfrak S_6$ and, for each of them, its $t$-vector (see Proposition~\ref{unionedisgiunta}) and its number of increasing paths. We see that the number of increasing paths is determined by the $t$-vector except in 7 cases out of 104, when it is determined by the $t$-vector together with the functions $\operatorname{succ}$ and $\operatorname{prec}$. This analysis leads to $c(F)=\bar{c}(F)$.

Suppose that $F$ is reducible. Then, its $t$-vector belongs to $t$-Vec$_1$ $*$ $t$-Vec$_4$ or $t$-Vec$_2$ $*$ $t$-Vec$_3$ (convolution product) by Lemma~\ref{prodotto diretto 2}(\ref{quinto'}), and its number of increasing paths is obtained by the product. We check that all possible convolutions belong to $C_i$ and have $i$ increasing paths except when one of the following two occur:

\begin{enumerate}
\item $F=F_1 * F_2$ for a 1-flipclass $F_1$ and a 4-flipclass $F_2$ with $t_{F_2}=(1,6,11,7,1)$ 
\item $F=F_1 * F_2$ for a 1-flipclass $F_1$ and a 4-flipclass $F_2$ with $t_{F_2}=(1,7,11,6,1)$
\end{enumerate}
In the first case, $c(F)=2$, $t_F=(1,7,17,18,8,1)$ and $\operatorname{succ}_F  = 4y^4+ 3y^6$.
In the second case, $c(F)=2$, $t_F=(1,8,18,17,7,1)$ and $\operatorname{succ}_F  = 6y^4+ y^5 + y^7$.
Hence,  $c(F)=\bar{c}(F)$.
\end{proof}

\begin{cor}
\label{corh=5}
Given $u,v \in \mathfrak S_n$, the coefficient  $[q^5]\widetilde{R}_{u,v}$ of $q^5$ in $\widetilde{R}_{u,v}$ is

$$[q^5]\widetilde{R}_{u,v} = \sum_{F} \bar{c}(F),$$

\noindent where $F$ ranges over all $5$-flipclasses of paths from $u$ to $v$ and  $\bar{c}(F)$ is defined in Definition~\ref{CF}.
%In particular, the coefficient of $q^5$ of the $\widetilde{R}$-polynomials of the symmetric group is a  combinatorial invariant. 
\end{cor}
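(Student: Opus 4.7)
The plan is to derive Corollary~\ref{corh=5} as a direct consequence of Theorem~\ref{teoh=5} together with Dyer's combinatorial interpretation of the $\widetilde{R}$-polynomial (Theorem~\ref{Dyertilde}). All the serious combinatorial content has already been packaged into the preceding theorem, so the corollary amounts to reorganizing a sum.

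First, I would fix any reflection ordering $\preceq$ and invoke Theorem~\ref{Dyertilde} to rewrite
\[
[q^5]\widetilde{R}_{u,v}(q) \;=\; \#\{\Gamma \in P_5(u,v) : \Gamma \text{ is increasing w.r.t. } \preceq\}.
\]
Next, since the $5$-flipclasses of paths from $u$ to $v$ form a partition of $P_5(u,v)$ (they are the orbits of $\mathfrak F_{5,u,v}$ acting on $P_5(u,v)$), one can split the count according to the flipclass to which each path belongs:
\[
[q^5]\widetilde{R}_{u,v}(q) \;=\; \sum_{F} \#\{\Gamma \in F : \Gamma \text{ is increasing w.r.t. } \preceq\},
\]
where $F$ ranges over all $5$-flipclasses of paths from $u$ to $v$.

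Then I would invoke Theorem~\ref{prec=prec}, which guarantees that each flipclass $F$ has the same number of increasing paths with respect to any reflection ordering, so the inner cardinality equals $c(F)$. Substituting, one obtains
\[
[q^5]\widetilde{R}_{u,v}(q) \;=\; \sum_F c(F).
\]

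Finally, Theorem~\ref{teoh=5} says precisely that $c(F) = \bar{c}(F)$ for every $5$-flipclass $F$, where $\bar{c}(F)$ is the function of the $\iota$-polynomial described in Definition~\ref{CF}. Replacing $c(F)$ by $\bar c(F)$ yields the stated formula. There is no real obstacle at the level of the corollary itself; all the difficulty is absorbed in Theorem~\ref{teoh=5} (with its finite computer-aided verification in $\mathfrak S_6$ via Theorem~\ref{h+1}, and its compatibility with the direct-product decomposition of reducible flipclasses from Lemma~\ref{prodotto diretto 2} and Proposition~\ref{flipclass-riducibili}).
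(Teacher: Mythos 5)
Your proof is correct and takes essentially the same route as the paper: invoke Theorem~\ref{Dyertilde} to identify $[q^5]\widetilde{R}_{u,v}$ with the count of increasing length-$5$ paths, then split over flipclasses and apply Theorem~\ref{teoh=5} to replace $c(F)$ by $\bar c(F)$. You merely make explicit the intermediate appeal to the partition of $P_5(u,v)$ into $\mathfrak F_{5,u,v}$-orbits and to Theorem~\ref{prec=prec}, which the paper leaves implicit.
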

\begin{proof}
By Theorem~\ref{Dyertilde},  the coefficient of $q^5$ of $\widetilde{R}_{u,v}$ is the number of increasing paths of length 5 from $u$ to $v$, which, by Theorem~\ref{teoh=5}, is given by the displayed formula.
\end{proof}

\subsection{$h=6$}
To tackle the case $h=6$, we use Theorem~\ref{strategia}.
\begin{thm}
\label{teoh=6}
The $\iota$-polynomial is an $h$-good invariant for each $h\leq 6$. 
\end{thm}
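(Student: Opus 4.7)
The plan is to apply Theorem~\ref{strategia} with $INV = \iota$. By Remark~\ref{costante}, the $\iota$-polynomial is an $r$-combinatorial invariant for every $r$, and by Lemma~\ref{prodotto diretto 2}(\ref{quinto'}) it is multiplicative under the shuffle operation:
\[
\iota_{F\ast G}(x,y,t) \;=\; \iota_{F}(x,y,t)\cdot \iota_{G}(x,y,t).
\]
Hence the combining map may be taken to be $\mu(p_1,p_2)=p_1 p_2$, and Theorem~\ref{strategia} applies. It then remains to verify, for $h=6$, the cardinality equality
\[
|\mathcal{I}c_6\cup\mathcal{R}c_6|\;=\;|\mathcal{I}_6\cup\mathcal{R}_6|,
\]
equivalently, the injectivity of the projection $(i,c)\mapsto i$ on the finite set $\mathcal{A}c_6=\mathcal{I}c_6\cup\mathcal{R}c_6$. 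By Theorem~\ref{strategia}, this single check will yield $r$-goodness of $\iota$ for every $r\in[6]$ simultaneously.

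As a preliminary sanity check, the results of the preceding subsections already imply the theorem for $h\leq 5$. Theorems~\ref{corone}, \ref{teoh=4} and \ref{teoh=5} express $c(F)$ as an explicit function of, respectively, the $t$-vector of $F$ (for $h=3,4$) and of the triple $(t_F,\operatorname{succ}_F,\operatorname{prec}_F)$ (for $h=5$). As observed in the remark immediately following the definition of $\iota_F$, each of these invariants is recovered from $\iota_F$ by suitable specialisations and partial derivatives, so $c(F)$ factors through $\iota_F$ in these ranges.

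For $h=6$ the algorithm of Theorem~\ref{strategia} must be executed on a computer. By Theorem~\ref{h+1}, every irreducible $6$-flipclass is isomorphic to one of the $1701056$ $6$-flipclasses of $\mathfrak S_7$ reported in the introduction. The four steps are: (i)~enumerate $\mathcal F_{6,7}$ by generating orbits of the $6$-flip group on $P_6(u,v)$ for all pairs $u\leq v$ in $\mathfrak S_7$; (ii)~for each orbit $F$ compute the time-support graph $TS_F$ and read off $\iota_F$ and $c(F)$, the latter counted directly as the number of increasing paths (by Proposition~\ref{BiB}, the lexicographically-first path is always one such, which serves as a useful cross-check); (iii)~assemble $\mathcal{I}c_6$, and then build $\mathcal{R}c_6$ using the sets $\mathcal{A}c_j=\mathcal{I}c_j\cup\mathcal{R}c_j$ for $j=1,\ldots,5$, tabulated inductively from the preceding subsections; (iv)~check that no two distinct elements of $\mathcal{I}c_6\cup\mathcal{R}c_6$ share the same $\iota$-polynomial.

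The main obstacle is the sheer scale of the enumeration at $h=6$. Delicate engineering issues include: generating each flipclass exactly once via a canonical representative under the flip group action; hashing $\iota$-polynomials efficiently to detect collisions; and iterating carefully over all decompositions $6=j+k$ when building $\mathcal{R}c_6$, with special attention to cross-collisions between irreducible and reducible contributions having the same $\iota$-polynomial. One further anticipates, by analogy with Definition~\ref{CF}, that neither the $t$-vector alone nor the enriched triple $(t_F,\operatorname{succ}_F,\operatorname{prec}_F)$ will suffice at $h=6$, so it is genuinely the full $\iota$-polynomial that must be used as the invariant. Once this finite computation terminates and the required injectivity is witnessed, Theorem~\ref{strategia} concludes the proof for every $r\in[6]$ simultaneously.
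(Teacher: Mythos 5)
Your proposal is correct and follows the same route as the paper: apply Theorem~\ref{strategia} with $INV=\iota$, using that $\iota$ is an $r$-combinatorial invariant (Remark~\ref{costante}), that $\iota_{F\ast G}=\iota_F\iota_G$ (Lemma~\ref{prodotto diretto 2}), and that the finite cardinality condition $|\mathcal{I}c_6\cup\mathcal{R}c_6|=|\mathcal{I}_6\cup\mathcal{R}_6|$ can be verified by computer on the $6$-flipclasses of $\mathfrak S_7$. The extra remarks (the $h\leq 5$ consistency check via Theorems~\ref{corone}, \ref{teoh=4}, \ref{teoh=5}, and the implementation considerations) are sound elaborations of the same argument, not a different proof.
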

\begin{proof}
We apply the strategy of Theorem~\ref{strategia}. The $\iota$-polynomial is an $r$-combinatorial invariant, for all $r\in \mathbb N$, and satifies $\iota_{F\ast G}= \iota_F \iota_G$.

Recall that $\mathcal{I}c_h$, $\mathcal{R}c_h$, $\mathcal{I}_h$, and $\mathcal{R}_h$ are finite sets. By computer analysis, we checked that the condition of  Theorem~\ref{strategia} is satisfied.
\end{proof}

To give an idea of the entity of the computation, we note that $|\mathcal F_{6,7}| = 1701056$ and $|\mathcal Ic_6|=4515$.
We also note that the computer calculations show $\mathcal Ic_6 \supset \mathcal Rc_6$.

\begin{cor}
\label{corh=6}
Given $u,v \in \mathfrak S_n$, the coefficient  $[q^6]\widetilde{R}_{u,v}$ of $q^6$ in $\widetilde{R}_{u,v}$ depends only on the $\iota$-polynomials of the $6$-flipclasses of paths from $u$ to $v$.
%In particular, the coefficient of $q^6$ of the $\widetilde{R}$-polynomials of the symmetric group is a  combinatorial invariant. 
\end{cor}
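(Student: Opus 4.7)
The plan is to follow the same template used for Corollaries~\ref{corh=3},~\ref{corh=4}, and~\ref{corh=5}, leveraging Theorem~\ref{teoh=6} as the main engine. First I would invoke Theorem~\ref{Dyertilde}, which says that for any fixed reflection ordering $\preceq$, the polynomial $\widetilde{R}_{u,v}(q)$ equals $\sum_{\Gamma} q^{\ell(\Gamma)}$ over increasing paths $\Gamma$ from $u$ to $v$. Extracting the coefficient of $q^6$ therefore counts the increasing paths of length exactly $6$ from $u$ to $v$.

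Next, I would partition $P_6(u,v)$ into the orbits of the $6$-flip group $\mathfrak F_{6,u,v}$, that is, into $6$-flipclasses. Because every path of length $6$ from $u$ to $v$ lies in exactly one flipclass, and because Theorem~\ref{prec=prec} guarantees that the number of increasing paths inside a flipclass is independent of the chosen reflection ordering, the decomposition yields
\[
[q^6]\widetilde{R}_{u,v}(q) = \sum_{F} c(F),
\]
where $F$ ranges over the $6$-flipclasses of paths from $u$ to $v$ and $c(F)$ is the well-defined invariant of Definition following Theorem~\ref{prec=prec}.

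Finally, I would apply Theorem~\ref{teoh=6}, which asserts that the $\iota$-polynomial is a $6$-good invariant: if two $6$-flipclasses $F_1$ and $F_2$ have $\iota_{F_1}=\iota_{F_2}$, then $c(F_1)=c(F_2)$. Hence each summand $c(F)$ in the displayed equation is a function purely of $\iota_F$. Summing over $F$, we conclude that $[q^6]\widetilde{R}_{u,v}(q)$ is determined by the multiset $\{\iota_F : F \text{ is a $6$-flipclass of paths from } u \text{ to } v\}$, as required.

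The proof itself is essentially a one-line bookkeeping argument once Theorem~\ref{teoh=6} is available; there is no genuine obstacle at this stage. The real difficulty was pushed into Theorem~\ref{teoh=6}, whose verification rests on the finiteness reduction of Corollary~\ref{finiti}, the product formulas of Lemma~\ref{prodotto diretto 2}, and the heavy computer enumeration of $\mathcal F_{6,7}$ (of cardinality $1701056$) together with the check that $|\mathcal{I}c_6 \cup \mathcal{R}c_6| = |\mathcal I_6 \cup \mathcal R_6|$. For the present corollary, nothing beyond the template above is needed.
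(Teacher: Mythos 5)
Your proposal is correct and matches the paper's (implicit) argument exactly: the paper leaves Corollary~\ref{corh=6} without a written proof precisely because it follows by the same template used for Corollaries~\ref{corh=3}--\ref{corh=5}, namely Theorem~\ref{Dyertilde} to identify $[q^6]\widetilde{R}_{u,v}$ with the number of increasing paths, the decomposition $[q^6]\widetilde{R}_{u,v} = \sum_F c(F)$ over $6$-flipclasses, and Theorem~\ref{teoh=6} to conclude that each $c(F)$ is a function of $\iota_F$ alone.
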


\subsection{Consequences}

The first and main consequence of Corollaries~\ref{corh=3},~\ref{corh=4},~\ref{corh=5}, and~\ref{corh=6} is the following.
\begin{thm}
\label{combinatoriainvarianza}
For any $n\in \mathbb N^+$ and for $h\leq 6$,  the coefficient of $q^h$ of the $\widetilde{R}$-polynomials of the symmetric group $\mathfrak S_n$  is a  combinatorial invariant. 
\end{thm}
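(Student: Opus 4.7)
The plan is to deduce Theorem~\ref{combinatoriainvarianza} as an immediate consequence of Corollaries~\ref{corh=3}, \ref{corh=4}, \ref{corh=5}, and~\ref{corh=6}, combined with the combinatorial invariance of the multiset of $\iota$-polynomials of the $h$-flipclasses recorded in Remark~\ref{costante}. No new combinatorial arguments are required: the heavy lifting has already been carried out in Sections~5, 6, and 7, so the final step is purely one of assembly.

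First I would dispose of the small cases $h\in\{0,1,2\}$, using the observations made at the beginning of the $h=3$ subsection. The coefficient $[q^0]\widetilde R_{u,v}(q)$ equals $1$ if $u=v$ and $0$ otherwise; $[q^1]\widetilde R_{u,v}(q)$ equals $1$ exactly when $u\to v$ is an edge of $B(\mathfrak S_n)$; and $[q^2]\widetilde R_{u,v}(q)$ equals $1$ exactly when $P_2(u,v)$ is nonempty. All three conditions are determined by the isomorphism class of $[u,v]$ as a poset, either trivially or via the theorem following the definition of the Bruhat graph, which asserts that the poset $[u,v]$ determines the directed graph $B([u,v])$.

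For $3\le h\le 6$, the relevant corollary expresses
\[
[q^h]\widetilde R_{u,v}(q)\;=\;\sum_{F}\gamma_h(F),
\]
where $F$ ranges over the $h$-flipclasses of paths from $u$ to $v$, and $\gamma_h(F)$ is a function of the $\iota$-polynomial of $F$ alone: for $h=3$ and $h=4$ it is the quantity $c_{t_F}$ depending only on the $t$-vector, for $h=5$ it is the function $\bar c(F)$ of Definition~\ref{CF}, and for $h=6$ it is the function tabulated in the proof of Theorem~\ref{teoh=6}. By Remark~\ref{costante}, the multiset $\{\iota_F : F \text{ an $h$-flipclass of paths in } [u,v]\}$ is a combinatorial invariant of $[u,v]$; therefore so is the right-hand side of the displayed equation.

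The only point that requires care in the writeup is to confirm that each of the functions $\gamma_h$ is genuinely well defined at the level of $\iota$-polynomials, not merely at the level of labelled flipclasses. For $h\le 4$ this is visible from Corollaries~\ref{corh=3} and~\ref{corh=4}, whose summands depend solely on the $t$-vector, which is itself extracted from $\iota_F$ via $\iota_F(1,1,t)$; for $h=5$ and $h=6$ it is guaranteed by the construction of $\bar c$ in Definition~\ref{CF} and by the verification in Theorem~\ref{teoh=6} that the $\iota$-polynomial is a $6$-good invariant. Once this is noted, the proof reduces to a one-line combination of the corollaries with Remark~\ref{costante}, and no additional obstacle arises.
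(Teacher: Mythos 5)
Your proposal is correct and matches the paper's approach: Theorem~\ref{combinatoriainvarianza} is presented there as an immediate consequence of Corollaries~\ref{corh=3}--\ref{corh=6} together with the combinatorial invariance of the multiset of $\iota$-polynomials recorded in Remark~\ref{costante}. Your explicit treatment of $h\le 2$ and the check that each summand is well defined at the level of $\iota$-polynomials are sound and merely make explicit what the paper leaves implicit.
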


As a byproduct, we obtain a new proof of the main result of \cite{Inc} for the symmetric group.
\begin{thm}
For any $n\in \mathbb N^+$, the Combinatorial Invariance Conjecture holds for all intervals in the symmetric group $\mathfrak S_n$ up to length 8.
\end{thm}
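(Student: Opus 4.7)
The plan is to deduce the theorem almost immediately from Theorem~\ref{combinatoriainvarianza} together with two simple observations: the well-known equivalence between the Combinatorial Invariance Conjecture for $P$-polynomials and for $\widetilde{R}$-polynomials, and a parity/top-coefficient argument that covers the only degrees not handled by Theorem~\ref{combinatoriainvarianza}.

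First I would unpack the equivalence. The Combinatorial Invariance Conjecture for the interval $[u,v]$ is equivalent to saying that $\widetilde{R}_{x,y}(q)$ depends only on the poset isomorphism class of $[x,y]$ for every $x\leq y$ in $[u,v]$. Since any poset isomorphism $[u,v]\cong [u',v']$ restricts to isomorphisms of all subintervals, it suffices to prove that, whenever $[x,y]$ is a Bruhat interval in some $\mathfrak S_n$ with $\ell(x,y)\leq 8$, the polynomial $\widetilde{R}_{x,y}(q)$ is determined by the poset $[x,y]$.

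Next I would use parity to shrink what must be checked. Recall that $\widetilde{R}_{x,y}(q)$ only has monomials $q^h$ with $h\equiv \ell(x,y)\pmod{2}$ and $h\leq \ell(x,y)$. Setting $k=\ell(x,y)\leq 8$, the possible degrees are:
\begin{itemize}
\item $k\leq 6$: all degrees are $\leq 6$.
\item $k=7$: degrees are $1,3,5,7$; every degree $<7$ is $\leq 5\leq 6$.
\item $k=8$: degrees are $0,2,4,6,8$; every degree $<8$ is $\leq 6$.
\end{itemize}
Thus every coefficient of $\widetilde{R}_{x,y}(q)$ except possibly the top coefficient $[q^{k}]\widetilde{R}_{x,y}$ is a coefficient $[q^h]\widetilde{R}_{x,y}$ with $h\leq 6$, and is therefore combinatorially invariant by Theorem~\ref{combinatoriainvarianza}.

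It remains to handle the top coefficient $[q^{k}]\widetilde{R}_{x,y}$. Here I would invoke Lemma~\ref{intervallo}: when the length of the paths equals $\ell(x,y)$, there is a single $k$-flipclass $F$ of paths from $x$ to $y$ with $c(F)=1$, so by Theorem~\ref{Dyertilde} the top coefficient equals $1$ identically. Since the value $1$ depends on nothing at all, it is a fortiori a combinatorial invariant. (One should note that $k=\ell(x,y)$ is itself a poset invariant of $[x,y]$, being the length of the longest chain.) Combining the two cases, all coefficients of $\widetilde{R}_{x,y}(q)$ are combinatorial invariants, so $\widetilde{R}_{x,y}$ itself is, and the equivalence in the first paragraph yields the theorem. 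There is no real obstacle in this argument: the heavy lifting has already been carried out in the computation underlying Theorem~\ref{combinatoriainvarianza}; the only thing to verify is the elementary parity bookkeeping and the reminder that the maximal-chain count is always $1$.
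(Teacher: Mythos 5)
Your proposal is correct and follows essentially the same route as the paper: reduce to combinatorial invariance of $\widetilde{R}_{x,y}$ on all subintervals, use parity and the degree bound to see that every coefficient of degree $<\ell(x,y)\leq 8$ lies in degree $\leq 6$ and is handled by Theorem~\ref{combinatoriainvarianza}, and note that the top coefficient is identically $1$ (the paper simply says "$\widetilde{R}_{u,v}$ is monic," which is the same fact you extract from Lemma~\ref{intervallo}). You have merely spelled out the subinterval reduction and the parity bookkeeping that the paper leaves implicit.
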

\begin{proof}
Given $u,v \in \mathfrak S_n$, the polynomial  $\widetilde{R}_{u,v}$ is  monic, has  degree $\ell(v) - \ell(u)$, and satisfies $[q^s]\widetilde{R}_{u,v} = 0$ whenever $s \not\equiv \ell(v) - \ell(u) \pmod2$. Hence the result follows by the results  in this section.
\end{proof}

\section{A conjecture}
We believe that the following conjecture might hold for all $h \in \mathbb N$. The results in the previous section settle it for $h \leq 6$.
\begin{con}
\label{flipclasscongettura}
Let $h\in \mathbb N$. Let $F$ and $G$ be two $h$-flipclasses such that $F^u$ and $G^u$ are isomorphic. Then $c(F) = c(G)$.
\end{con}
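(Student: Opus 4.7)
The plan is to proceed by two reductions---one to the irreducible case and one to a finite setting---and then tackle what remains as the core structural statement.

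First, I would show the conjecture reduces to the irreducible case. If $F \cong F_1 \ast \cdots \ast F_s$ and $G \cong G_1 \ast \cdots \ast G_t$ are the irreducible decompositions of Proposition~\ref{flipclass-riducibili}, then Lemma~\ref{prodotto diretto 2}(\ref{sesto'}) gives $c(F) = \prod_i c(F_i)$ and $c(G) = \prod_j c(G_j)$. Hence it suffices to prove that $F^u \cong G^u$ forces $s=t$ and $F_i^u \cong G_{\sigma(i)}^u$ for some permutation $\sigma$. Since $TS^u_F \cong TS^u_G$ (both being intrinsic to the unlabelled flipclass) and Lemma~\ref{prodotto diretto 2}(\ref{quarto'}) identifies $TS^u_{F\ast G}$ as the Cartesian product $TS^u_F \times TS^u_G$, this should follow from a uniqueness result for Cartesian decomposition of graded directed acyclic graphs (or graded posets). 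Once the reduction to the irreducible case is complete, Theorem~\ref{h+1} allows us to assume, up to isomorphism, that our flipclass lives in $\mathfrak S_{h+1}$, a finite situation.

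Second, for each isomorphism class of unlabelled irreducible $h$-flipclasses of $\mathfrak S_{h+1}$, I would want to show that all labelled realizations share the same $c$. The present paper establishes this computationally for $h\leq 6$ via the $\iota$-polynomial (which, being defined purely from in- and out-degrees in $TS_F$, is automatically an $F^u$-invariant). For a conceptual proof valid in all $h$, I would try to construct, from $F^u$ alone, a canonical object that counts increasing paths. One natural route is to refine $TS^u_F$ by recording the data of Lemma~\ref{P property}---namely, for each pair $\bar x=(x,i-1)$, $\bar y=(y,i+1)$, whether the number of length-two paths between them is $0$, $1$, or $2$---and to show that this enriched structure suffices to reproduce the labels of $F$ up to the action of a group of symmetries that preserves $c$.

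The main obstacle is precisely this last step: controlling how different labellings of the same $F^u$ interact with an arbitrary reflection ordering. A promising local strategy, in the spirit of the proof of Theorem~\ref{prec=prec}, would be to use Lemma~\ref{B-I} and the case analysis of Proposition~\ref{regoladiflip} to produce, for any two labelled realizations of a fixed $F^u$, either an explicit label-swapping bijection between their increasing-path sets or a sequence of local moves that connects them. Characterizing the lexicographically-first path in a flipclass (cf.\ Proposition~\ref{BiB}) purely in terms of the combinatorics of $TS^u_F$ is, I suspect, the technical heart of such an argument; once achieved, an induction on $h$ combined with the reductions above should deliver the conjecture in full generality.
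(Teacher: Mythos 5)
This statement is a conjecture, and the paper does not prove it: the results of Section~7 settle it only for $h\leq 6$, essentially by verifying computationally that the $\iota$-polynomial is an $h$-good invariant in that range. Your write-up is therefore a plan rather than a proof, and it has gaps at the exact points where a proof would have to do work.

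The reduction to the irreducible case is already problematic. Irreducibility of a flipclass $F$ is defined through the labels, via the reflection subgroup $W(F)$, whereas $F^u$ forgets labels. You would first have to show that an isomorphism $F^u\cong G^u$ carries the irreducible factorization $F\cong F_1\ast\cdots\ast F_s$ to one for $G$, and a uniqueness theorem for Cartesian decompositions of graded DAGs acts at the level of $TS^u_F$, not of $F^u$ (the map $i_{TS_F}$ need not be bijective, by Lemma~\ref{tutti effettivi} and the surrounding discussion, so $TS^u_F$ can genuinely lose information about the flip action). You would also need to know that irreducible flipclasses have Cartesian-indecomposable time-support graphs, which is not established. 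More seriously, your step 3 is precisely Conjecture~\ref{flipclasscongettura} restricted to the irreducible case, and you yourself identify it as "the technical heart" and leave it unaddressed. The reduction to $\mathfrak S_{h+1}$ via Theorem~\ref{h+1} makes each fixed $h$ finite but does nothing to handle all $h$ uniformly, so the sketch does not yield a general argument. In short: the multiplicativity $c(F_1\ast F_2)=c(F_1)c(F_2)$ from Lemma~\ref{prodotto diretto 2} and the finiteness from Theorem~\ref{h+1} are correct and are indeed the ingredients the paper uses for $h\leq 6$, but they do not combine into a proof without the missing core step, which the paper itself leaves open.
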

Conjecture~\ref{flipclasscongettura} implies the Combinatorial Invariance Conjecture. To give an idea of the rate of growth, we note that  the sequence of the number $|\mathcal F_{h,h+1}|$ of $h$-flipclasses of $\mathfrak S_{h+1}$   is $(4, 4, 50, 1096, 36634, 1701056)$, for $h=1, \ldots  ,6 $.

\bigskip
{\bf Acknowledgments:} 
The computations involved in this work were made using SageMath \cite{Sage}. 
The authors warmly thank Luca Righi for assistance in the use of the HPC cluster of the Department of Mathematics \lq\lq Tullio Levi-Civita\rq\rq of the  Universit\`a di Padova.
The authors are members of the INDAM group GNSAGA. The first author was supported in part by the project of the Universit\`a di Padova [BIRD203834/20]. The first author wishes to thank the Universit\`a Politecnica delle Marche for hospitality.

\end{document}